  \newtheorem{theorem}{Theorem}
  \newtheorem{lemma}{Lemma}[section]
  \newtheorem{proposition}{Proposition}[section]
  \newtheorem{corollary}{Corollary}[section]
\numberwithin{equation}{section}
\newcommand{\be}{\begin{equation}}
\newcommand{\ee}{\end{equation}}
\newcommand{\bd}{\begin{displaymath}}
\newcommand{\ed}{\end{displaymath}}
\newcommand{\N}{\mathbb N}
\newcommand{\R}{\mathbb R}
\newcommand{\C}{\mathbb C}
\newcommand{\A}{\mathcal A}
\newcommand{\LL}{\mathcal L}
\newcommand{\OO}{\mathcal O}
\newcommand{\Eb}{\mathbb E}
\newcommand{\Cb}{\mathbf C}
\newcommand{\J}{\mathcal J}
\newcommand{\HH}{\mathcal H}
\newcommand{\W}{\mathcal W}
\newcommand{\Union}{\mathop{\bigcup}\limits}
\newcommand{\Prod}{\mathop{\prod}\limits}
\DeclareMathOperator{\Div}{div}
\def\XXint#1#2#3{{\setbox0=\hbox{$#1{#2#3}{\int}$ }
\vcenter{\hbox{$#2#3$ }}\kern-.6\wd0}}
\def\Ig {{\mathcal I}}
\def\Kg {{\mathcal K}}
\def\Ug {{\mathcal U}}
\def\Vg {{\mathcal V}}
\begin{document}
\bibliographystyle{siam} \title{The Clausius-Mossotti formula for
  dilute random media of perfectly conducting inclusions} \author{Y.
  ALMOG \thanks{Department of Mathematics, Louisiana State University,
    Baton Rouge, LA 70803, USA},} \date{}
\maketitle

\begin{abstract}
  We consider a large number of randomly dispersed spherical,
  identical, perfectly conducting inclusions (of infinite
  conductivity) in a bounded domain. The host medium's conductivity is
  finite and can be inhomogeneous.  In the dilute limit, with some
  boundedness assumption on a large number (proportional to the global
  volume fraction raised to the power of $-1/2$) of marginal
  probability densities, we prove convergence in $H^1$ norm of the
  expectation of the solution of the steady state heat equation, to
  the solution of an effective medium problem, where the conductivity
  is given by the Clausius-Mossotti formula. Error estimates are
  provided as well.
\end{abstract}

\section{Introduction}

Consider a $N$ spherical perfectly conducting inclusions of radius
$\epsilon$ immersed in a different medium of non-uniform conductivity $a$.
Prescribing the temperature (or the electric potential) on the
boundary, the temperature field inside can be described as the unique
solution of the problem
\begin{subequations}
  \label{eq:1}
\begin{empheq}[left={\empheqlbrace}]{alignat=2}  
  &  \Div(a\nabla\phi)  = 0 \quad & \text{in } & \Omega\setminus\Union_{n=1}^N B(\eta_n,\epsilon) \,, \\
  & \phi= f \quad & \text{on } & \partial\Omega \,, \\
  &  \phi=C_n \quad &\text{in } & B(\eta_n,\epsilon)\,,\;1\leq n\leq N \,, \\
  &  \int_{\partial B(\eta_n,\epsilon)} a\frac{\partial\phi}{\partial\nu}\ \,ds =0 \,. \quad &\quad & \quad
  \end{empheq}
\end{subequations}
In the above, $\Omega\subset\R^3$ is bounded and smooth (say $C^{2,\alpha}$ for
some positive $\alpha\geq1/2$), $a\in C^{1,\alpha}(\bar{\Omega},\R_+)$, and hence,
\begin{equation}
\label{eq:2}
  0<\lambda\leq a(x)\leq\Lambda \quad\forall x\in\bar{\Omega}\,,
\end{equation}
$\{\eta_i\}_{i=1}^N$ denote the spherical
inclusion centers, and $f\in C^{2,\alpha}(\partial\Omega)$.

The particles' centers are assumed to be randomly distributed
according to the joint probability density function
$f_N(\eta_1,\ldots,\eta_N)$, which is assumed to be invariant to permutations
of the centers as all particles are identical. Moreover, we assume
that the inclusions cannot overlap, i.e., 
\begin{equation}
  \label{eq:3}
 \exists1\leq i<j\leq N:\;|\eta_i-\eta_j|<2\epsilon\Rightarrow f_N(\eta_1,\ldots,\eta_N)=0 \,,
\end{equation}
and that no inclusion can cross the boundary, i.e.,
\begin{equation}
\label{eq:4}
  \exists1\leq i\leq N: \,d(\eta_i,\partial\Omega)< \epsilon \Rightarrow f_N(\eta_1,\ldots,\eta_N)=0 \,.
\end{equation}

We focus our attention on the small particle limit in a dilute  (or
dispersive \cite{ko89} ) medium, i.e., we first let $\epsilon\to0$ but keep
the volume fraction $\bar{\beta}$ fixed, where
\begin{equation}
\label{eq:5}
\bar{\beta} = \frac{4\pi}{3} \frac{N\epsilon^3}{|\Omega|}\,,
\end{equation}
and then let $\bar{\beta}\to0$.
Note that $N$ must tend to infinity as $\epsilon\to0$ when $\bar{\beta}$ is
fixed. As in \cite{al13} we
assume
\begin{equation}
\label{eq:6}
  \frac{\epsilon}{C} <\bar{\beta} \leq \frac{C}{\ln^4 \epsilon^{-1}} \,.
\end{equation}

Let
\begin{displaymath}
  f_k(\eta_1, \ldots,\eta_k) = \int_{\Omega^{N-k}} f_N (\eta_1, \ldots,\eta_k,\eta_{k+1},
  \ldots,\eta_N) \, d\eta_{k+1} \cdots d\eta_N \,,
\end{displaymath}
denote the $k'th$ order marginal probability density. We assume here
that for some $C_0>0$
\begin{equation}
  \label{eq:7}
\|f_k\|_{L^\infty(\Omega^k)} \leq C_0^k \quad \forall 1\leq k\leq\bar{\beta}^{-1/2} \,,
\end{equation}
where $C_0$ is independent of $N$ and $\epsilon$. We denote the expectation of
any function $F(x,\cdot)\in L^1(\Omega^N)$, where $x\in\Omega$, by
\begin{equation}
  \label{eq:8}
\Eb_f\big(F(x,\cdot)\big) = \int_{\Omega^N} F(x,\eta_1,\ldots,\eta_N)  f_N(\eta_1,\ldots,\eta_N)\, d\eta_1 \cdots d\eta_N \,.
\end{equation}

Define, next, the local volume fraction for all $x\in\Omega$
\begin{equation}
\label{eq:9}
  \beta(x) = N \int_{B(x,\epsilon)\cap\Omega_\epsilon}f_1(\eta) \,d\eta \,,
\end{equation}
where
\begin{equation}
\label{eq:10}
  \Omega_\epsilon = \{x\in\Omega \,| \, d(x,\partial\Omega)>\epsilon \} \,.
\end{equation}
Note that $\beta(x)$ is the the probability that
$x\in\Union_{n=1}^NB(\eta_n,\epsilon)$. It follows from \eqref{eq:7} that
\begin{equation}
\label{eq:11}
  \|\beta(\cdot)\|_\infty \leq C \bar{\beta} \,.
\end{equation}
Where $\|\cdot\|_p$ denotes the $L^p(\Omega)$ norm ($p=\infty$ above). When $L^p$
norms are evaluated over domains different than $\Omega$, we shall
include them explicitly in the notation.

Under the above assumptions we prove the following theorem
\begin{theorem}
  \label{thm:1}
  Let $\phi(\cdot,\eta_1,\ldots,\eta_N)\in H^1(\Omega)$ denote the unique weak solution
  of \eqref{eq:1}, and suppose that \eqref{eq:7} is
  satisfied. Let $\phi_e$ denote the solution of the effective medium
  problem
\begin{equation}
\label{eq:12}
  \begin{cases}
    \nabla\cdot(a_e\nabla\phi_e) = 0 & \text{in } \Omega \\
    \phi_e =f & \text{on }\partial\Omega \,,
  \end{cases}
\end{equation}
where 
\begin{equation}
\label{eq:13}
  a_e(x) =a(x)( 1  + 3\beta(x))\,.
\end{equation}
Then,  in the regime of \eqref{eq:6},  we have 
\begin{equation}
\label{eq:14}
\|\Eb_f(\phi) -\phi_e\|_{1,2} \leq C(\Omega,\sigma)\bar{\beta}^{5/4} \,,
\end{equation}
where $\|\cdot\|_{1,p}$ denotes the $W^{1,p}(\Omega)$ norm.
\end{theorem}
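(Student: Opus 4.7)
The plan is to build a configuration-dependent approximate solution $\tilde\phi$ to \eqref{eq:1} through a truncated method of reflections, then to exploit hypothesis \eqref{eq:7} to evaluate its expectation to leading order as $\phi_e$, and finally to close with an $H^1$ energy estimate. The underlying physical intuition is that each perfectly conducting ball of radius $\epsilon$ in a locally uniform field acts as an electric dipole of moment proportional to $\epsilon^3$ times the local field gradient; summing these dipoles and averaging produces precisely the $3\beta(x)$ correction that appears in $a_e$, coming from the $\sigma_i\to\infty$ limit of the Clausius--Mossotti formula.

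Concretely, I would first set $\tilde\phi^{(0)}=\phi_e$ and recursively define, for $k\geq 1$, $\tilde\phi^{(k)}=\tilde\phi^{(k-1)}+\sum_{n=1}^N u_n^{(k)}$, where $u_n^{(k)}$ is the exterior dipole field that restores the constancy of $\tilde\phi^{(k)}$ on $\partial B(\eta_n,\epsilon)$ up to contributions coming only from the other balls. To leading order, $u_n^{(1)}(x)\approx -\epsilon^3\,\nabla\phi_e(\eta_n)\cdot(x-\eta_n)/|x-\eta_n|^3$ outside $B(\eta_n,\epsilon)$, and each subsequent $u_n^{(k)}$ is smaller by factors depending on inter-particle distances. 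Truncating the reflection expansion at level $K\sim\bar\beta^{-1/2}$ and appealing to the deterministic machinery already developed in \cite{al13}, I would bound the per-configuration $H^1$ error between $\phi$ and $\tilde\phi^{(K)}$ by a sum of the truncation residual and the dipole-approximation error, both of which admit pointwise bounds in terms of local sums over the $\eta_n$.

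Taking expectations is the central computation. The expectation of $\sum_n u_n^{(1)}$ against $f_N$ reduces by integration in $\eta_2,\ldots,\eta_N$ to an integral against $f_1$; expanding the dipole kernel and integrating over $\Omega\setminus B(x,\epsilon)$ produces exactly the coefficient $3\beta(x)$ multiplying $\nabla\phi_e$ in the leading-order gradient of $\Eb_f(\tilde\phi)$, which matches the equation for $\phi_e$ with conductivity $a_e$. Higher reflection levels involve integrals against $f_2,f_3,\ldots,f_K$, and \eqref{eq:7} bounds each such integral by $C_0^k$ times a geometric factor of size $\bar\beta^k$, so the truncated expectation series is absolutely summable up to $k=K$. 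Optimizing the truncation level against the per-level residual, together with the logarithmic window in \eqref{eq:6}, is where the exponent $5/4$ should emerge.

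The main technical obstacle I foresee is controlling the interaction between close pairs in the reflection series: each dipole tail decays only like the inverse square of the separation, and while the non-overlap condition \eqref{eq:3} rules out distances below $2\epsilon$, configurations with separations of order $\epsilon$ contribute nontrivially both to the $H^1$ norm of the dipole sum and to the effective field seen by each ball. Handling these contributions in expectation is precisely what the assumption $\|f_k\|_\infty\leq C_0^k$ for $k\leq\bar\beta^{-1/2}$ is tailored for, but making the bound quantitative in $H^1$ will require localized singular-integral estimates of Calder\'on--Zygmund type on the dipole sum, together with control of boundary-layer effects near $\partial\Omega$ permitted by \eqref{eq:4} and \eqref{eq:11}.
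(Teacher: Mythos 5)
Your proposal is a method-of-reflections argument; the paper does something structurally different, and the difference is precisely where the novelty (and the difficulty) of the perfect-conductor case lives, so your plan has a genuine gap rather than being merely a different route to the same bound. The paper does not iterate reflections to level $K\sim\bar\beta^{-1/2}$. Instead it uses a \emph{fixed second-order cluster expansion}: write $\phi=\bar\phi+\sum_i\phi_1(\cdot,\eta_i)+\tfrac12\sum_{i\neq j}v_2(\cdot,\eta_i,\eta_j)+u$ (display \eqref{eq:78}) and then bounds $\Eb_f(\|\nabla u\|_2^2)$ directly by a \emph{variational} argument: $u$ minimizes the Dirichlet energy among fields equal to a constant plus prescribed data on each sphere, so one only needs to exhibit an admissible competitor $\tilde u$ and control $\Eb_f(\|\nabla\tilde u\|_2^2)$. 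The competitor is built cluster by cluster (singletons, pairs, and clusters of size $\geq 3$), and the marginal bound \eqref{eq:7} enters to show that clusters of size $\gtrsim\bar\beta^{-1/4}$ contribute negligibly (this is the role of $M_\beta=[\beta^{-1/4}]$ and the factorial bound \eqref{eq:108}). The exponent $5/4$ comes from $\bigl(\Eb_f\|\nabla u\|_2^2\bigr)^{1/2}$ with $\Eb_f\|\nabla u\|_2^2\lesssim\bar\beta^{5/2}$, dominated by the size-$\geq 3$ cluster estimate \eqref{eq:98}; it is not the result of balancing a truncation level against a per-level residual.

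The concrete gap in your plan is that for perfectly conducting inclusions with only the non-overlap constraint $|\eta_i-\eta_j|\geq 2\epsilon$, the reflection series does not admit the per-configuration decay ``by factors depending on inter-particle distances'' that your argument needs. For finite contrast (the setting of \cite{al13,al14}) one can rewrite the inclusions as a bounded perturbation of the conductivity and the reflection/Neumann series converges uniformly; for $\sigma=\infty$ that device is unavailable, and nearly touching pairs and larger clusters produce reflection contributions that are $O(1)$ at each level for that configuration. The paper's stated reason for introducing variational techniques is exactly this failure, and it is not cured by the expectation alone: you must first replace the divergent deterministic expansion by a bounded remainder, which the paper does via the minimization property of $u$ together with the $K$-particle capacity bound \eqref{eq:23} used in the size-$\geq3$ cluster test functions. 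Two smaller but real issues in the writeup: the dipole \emph{field} (gradient) seen by a neighbour decays like $\epsilon^3/r^3$, not like the inverse square of the separation, which matters when you try to sum tails over neighbours; and seeding the iteration with $\phi_e$ rather than $\bar\phi$ obscures where the $3\beta$ coefficient is actually produced in the analysis --- in the paper it appears only at the very end, in Lemma \eqref{eq:129}, as the expectation of the single-inclusion dipole correction $N\Eb_f(\phi_1)$, and matching it to $-3\LL^{-1}\bigl(\nabla\cdot(a\beta\nabla\bar\phi)\bigr)$ requires the error bounds of Section \ref{sec:single-inclusion} (in particular \eqref{eq:34}), which your outline does not supply.
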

Throughout the sequel,  we always refer to solutions in a weak sense,
including places in the text where we do not state that
explicitly.  

In \cite{al13,al14} results similar Theorem \ref{thm:1} have been
obtained, for the case where the conductivity of the inclusion is a
fixed and strictly positive. Within the theory of stochastic
homogenization, a similar situation has been treated in \cite{mo13} for
a discrete operator, whereas in \cite{dugl16} the continuous case has
been addressed, including the vector case (thereby establishing a
Clausius-Mossotti formula for the elastic constants of random
composite media). Despite the greater generality of the results in
\cite{dugl16} from some aspects (it also applies to dilute random
perturbation of random media) , the technique in \cite{al13,al14} as
well as in the present contribution does not assume stationarity and
ergodicity of the probability density. These are fundamental
assumptions in the theory of homogenization \cite{ko79,pava81}, and
are certainly assumed in \cite{dugl16}. Moreover, the results in
\cite{al13,al14} can be easily generalized to higher dimensions and to
arbitrary inclusion shape (and even random shapes). We skip these
generalization here for the sake of simplicity, but manifest the
greater generality of our technique by allowing for an inhomogeneous
conductivity. We note that one can extend the present analysis to the
case where the conductivity is anisotropic (or when $a$ is replaced by
a positive $3\times3$ symmetric matrix $A\in C^{1,\alpha}(\Omega)$ whose eigenvalues
satisfy \eqref{eq:2}), as the Green's function estimate in Appendix A
apply to this case as well. The vector case is deferred to a later
stage.

The main progress offered by the present contribution is that it
addresses the case of perfect conductors (of infinite conductivity).
This requirement of uniform ellipticity, is a standard assumption
within the theory of homogenization, and is certainly assumed in
\cite{dugl16}. Whereas for finite conductivity, one can present the
effect of inclusions by a discontinuous conductivity function, in the
case of perfect conductors it is impossible. Instead, we use here
variational techniques to estimate the error generated by
approximating the contribution of each inclusion via the assumption
that it is given in a homogeneous temperature gradient field. These
variational estimates require further assumptions in the form of
\eqref{eq:7} beyond those made in \cite{al13}. Certainly, one cannot
obtain from them the $L^p$ (for $1<p<2$) estimates derived in
\cite{al14}. We note that the homogenization of  a dilute periodic array of
perfect conductors for the time dependent heat equation has been
treated in \cite{mipr92}. In \cite{deetal14} the   time dependent
problem is addressed for a random medium, assuming that
$|\eta_i-\eta_j|\geq C\bar{\beta}^{1/3}$ for all $t>0$.

The rest of this contribution is  arranged as follows. In the next
section we review a few basic preliminaries. In sections 3 and 4 we
respectively derive (as in \cite{al13}) a few necessary inequalities
for the solution of \eqref{eq:1} with $N=1$ and $N=2$. In \S\,5 we
derive an estimate of $\Eb_f(\phi)$ using the  single single
inclusion solution of \S\,3. Finally, in \S\,6 we complete the proof of Theorem
\ref{thm:1}. 

http://www.zemereshet.co.il/song.asp?id=3704
\section{Preliminaries}

\subsection{A variational principle}
Set $B_n=B(\eta_n,\epsilon)$, and ${\mathbf C}=(C_1,\ldots,C_N)\in\R^N$.
\begin{lemma}
Let 
\begin{equation}
  \label{eq:15} I_N(w) = \int_{\Omega\setminus\Union_{n=1}^NB_n}a|\nabla w|^2\,dx,
\end{equation} 
defined on 
\begin{subequations}
  \label{eq:16}
\begin{equation}
X_N(f,\{g_n\}_{n=1}^N)=\Union_{\Cb\in\R^N}\HH_N(\Cb,f,\{g_n\}_{n=1}^N)\,,
\end{equation}
where
\begin{equation}
\HH_N(\Cb,f,\{g_n\}_{n=1}^N)=\{w\in H^1(\Omega)\,|\,w|_{\partial\Omega}=f \,, \; w|_{\partial B_n}=g_n+C_n
; 1\leq n\leq N \}\,,
\end{equation}
  \end{subequations}
and $f$ and $\{g_n\}_{n=1}^N$ are in $C^{2,\alpha}$.  There exists a
unique minimizer for $I_N$ in $X_N$. Furthermore, the minimizer must
be the unique solution of
\begin{subequations}
\label{eq:17}
\begin{empheq}[left={\empheqlbrace}]{alignat=2}  
  &  \Div(a\nabla v)  = 0 & \text{in } \Omega\setminus\Union_{n=1}^N B_n \,, \\
   & v= f & \text{on } \partial\Omega \,, \\
  &  v=C_n+g_n & \text{in } B_n\,,\;1\leq n\leq N \,, \\
  &  \int_{\partial B_n}a \frac{\partial v}{\partial\nu}\ \,ds =0 \,. &
  \end{empheq}
\end{subequations}
\end{lemma}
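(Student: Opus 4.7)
My plan is to apply the direct method of the calculus of variations, and then to identify the minimizer with the solution of \eqref{eq:17} via its Euler--Lagrange equations.

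First I would verify $X_N\neq\emptyset$. Since \eqref{eq:3}--\eqref{eq:4} ensure that $\partial\Omega$ and the spheres $\partial B_n$ are pairwise disjoint and $C^{2,\alpha}$, the data $f$ and $\{g_n\}$ can be extended off their respective boundaries and glued together with a smooth partition of unity, producing an explicit $w_0\in C^{2,\alpha}(\overline{\Omega})\cap\HH_N(0,f,\{g_n\}_{n=1}^N)$. For coercivity, observe that any $w\in X_N$ satisfies $w-w_0\in H^1(\Omega)$ with zero trace on $\partial\Omega$; combining Poincar\'e's inequality with the ellipticity \eqref{eq:2} yields
\begin{equation*}
\|w-w_0\|_{H^1(\Omega\setminus\Union_{n=1}^NB_n)}\leq C(\lambda,\Omega)\bigl(I_N(w)^{1/2}+\|w_0\|_{H^1(\Omega)}\bigr),
\end{equation*}
and the free constants $\Cb$ are controlled via the trace theorem on each $\partial B_n$.

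For a minimizing sequence $\{w_k\}$, I would first replace each $w_k$ inside $B_n$ by the harmonic extension of its boundary values, which leaves $I_N$ unchanged and produces a sequence bounded in $H^1(\Omega)$. Extracting a subsequence $w_k\weakly v$, both constraints --- $w|_{\partial\Omega}=f$ and the requirement that $w|_{\partial B_n}-g_n$ be constant on each $\partial B_n$ --- define closed affine subspaces of $H^1(\Omega)$ by continuity of the trace maps, and are therefore preserved in the weak limit, so $v\in X_N$. Weak lower semicontinuity of the convex quadratic form $w\mapsto\int a|\nabla w|^2$ then gives $I_N(v)\leq\liminf I_N(w_k)$, so $v$ is a minimizer. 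Uniqueness follows from strict convexity of $|\cdot|^2$: two minimizers would have identical gradients on $\Omega\setminus\Union_{n=1}^NB_n$, hence would agree there by connectedness and the Dirichlet condition on $\partial\Omega$.

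To identify $v$ with the weak solution of \eqref{eq:17}, I would compute the first variation along the tangent space $T_v X_N=\{\varphi\in H^1(\Omega):\varphi|_{\partial\Omega}=0,\;\varphi|_{\partial B_n}\equiv c_n\in\R\}$. Testing with $\varphi\in C_c^\infty(\Omega\setminus\Union_{n=1}^N\overline{B_n})$ yields $\Div(a\nabla v)=0$ weakly in $\Omega\setminus\Union_{n=1}^NB_n$. Testing next with a $\varphi$ satisfying $\varphi|_{\partial B_m}=1$ and $\varphi|_{\partial B_n}=0$ for $n\neq m$ (constructed by the same partition-of-unity argument as in the first step) and integrating by parts against the equation just obtained leaves only the neutrality condition on $\partial B_m$. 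The Dirichlet conditions on $\partial\Omega$ and on $\partial B_n$ are built into the definition of $X_N$, while the stipulation $v=C_n+g_n$ inside $B_n$ is understood as a canonical extension of the minimizer (which is uniquely determined only outside the balls, since $I_N$ does not see inside them). The only bookkeeping step requiring real care is the closedness of the ``constant on $\partial B_n$'' constraint under weak convergence, but this is immediate from continuity of the trace operator $H^1(\Omega)\to H^{1/2}(\partial B_n)$.
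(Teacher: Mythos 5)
Your argument is correct and is exactly the ``rather standard proof'' that the paper explicitly chooses to skip, so there is no paper proof to compare it against. A few remarks on the places where the standard argument actually requires care, all of which you handle: (i) $I_N$ does not see the interior of the balls, so any $w\in X_N$ can be freely modified there without changing $I_N$; your device of passing to the harmonic extension inside each $B_n$ is one clean way to turn a bound on $\|\nabla w_k\|_{L^2(\Omega\setminus\Union B_n)}$ into a genuine $H^1(\Omega)$ bound, although one could equivalently work throughout in $H^1(\Omega\setminus\Union_n B_n)$ and only extend at the end. (ii) The constraint set $X_N$ is in fact an affine subspace of $H^1(\Omega)$ --- it is exactly $\{w\,:\,w|_{\partial\Omega}=f,\ (w-g_n)|_{\partial B_n}\ \text{constant}\}$ --- which makes the weak closedness you invoke immediate; your observation that this reduces to continuity of the trace $H^1(\Omega)\to H^{1/2}(\partial B_n)$ is the right one. (iii) Your uniqueness argument correctly rests on connectedness of $\Omega\setminus\Union_n B_n$ (which holds since the $B_n$ are disjoint closed balls strictly interior to a connected $\Omega\subset\R^3$) together with the Dirichlet condition on $\partial\Omega$; and you correctly flag that the minimizer is determined only outside the balls, with \eqref{eq:17}c serving as the canonical interior extension. (iv) The Euler--Lagrange computation --- first testing with $\varphi\in C_c^\infty(\Omega\setminus\Union_n\overline{B}_n)$ to get the PDE, then with $\varphi$ equal to $1$ on one $\partial B_m$ and $0$ on the rest and on $\partial\Omega$ to extract the zero-flux condition --- is the standard route, and the one minor point worth stating explicitly is that the flux integral in \eqref{eq:17}d is a priori only a duality pairing $\langle a\,\partial_\nu v,1\rangle_{H^{-1/2}\times H^{1/2}}$; with $a\in C^{1,\alpha}$ and the $C^{2,\alpha}$ data assumed, elliptic regularity makes it a classical surface integral. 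None of these affect the validity of your proof.
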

We skip here the rather standard proof of this lemma.

\subsection{An integral representation}
\label{sec:integralrep}
Let $\bar{\phi}$ denote the unique solution of
\begin{equation}
\label{eq:18}
  \begin{cases}
    \LL\bar{\phi} = 0 & \text{in } \Omega \\
    \bar{\phi}=f & \text{on } \partial\Omega\,,
  \end{cases}
\end{equation}
where $\LL\overset{def}{=}-\Div a\nabla$. Let $G:\Omega\times\Omega\to\R_+$ denote the
Green's function associated with the Dirichlet realization of $\LL$ in
$\Omega$. Then, we have by Green's formula, for all
$x\in\Omega\setminus\Union_{n=1}^NB_n$,
\begin{multline*}
  \phi(x,\eta_1,\ldots,\eta_N) = \int_{\partial\Omega\cup\Union_{n=1}^N\partial B_n^{out}}a(\xi)\Big[ G(x,\xi)\frac{\partial\phi}{\partial\nu}(\xi,\eta_1,\ldots,\eta_N)
  \\-\phi(\xi,\eta_1,\ldots,\eta_N)\frac{\partial G}{\partial\nu} (x,\xi)\Big] ds_\xi \,.
\end{multline*}
Since $G(x,\cdot)|_{\partial\Omega}=0$, and
\begin{displaymath}
    \bar{\phi} =  - \int_{\partial\Omega}a
  \phi\frac{\partial G}{\partial\nu}\,ds_\xi \,,
\end{displaymath}
we obtain that
\begin{multline*}
  \phi(x,\eta_1,\ldots,\eta_N)=\bar{\phi}(x) + \sum_{n=1}^N\int_{\partial B_n^{out}} \Big[ G(x,\xi)a(\xi)\frac{\partial\phi}{\partial\nu}(\xi,\eta_1,\ldots,\eta_N)
  -\\\phi(\xi,\eta_1,\ldots,\eta_N)a(\xi)\frac{\partial G}{\partial\nu} (x,\xi)\Big]\,ds_\xi \,.
\end{multline*}
Using (\ref{eq:1}c) and the fact that $\LL G=0$ in $B_n$ for all
$1\leq n\leq N$ then yields
\begin{equation}
  \label{eq:19}
\phi(x,\eta_1,\ldots,\eta_N)=\bar{\phi}(x) + \sum_{n=1}^N\int_{\partial B_n^{out}} 
G(x,\xi)a(\xi)\frac{\partial\phi}{\partial\nu}(\xi,\eta_1,\ldots,\eta_N) \,ds_\xi \,,
\end{equation}
for all $x\in\Omega\setminus\Union_{n=1}^NB_n$.

\subsection{N-particle capacity}
\label{sec:Ncapacity}

Let
\begin{equation}
\label{eq:20}
  C_K(\eta_1,\ldots,\eta_K) = \inf_{u\in\W(\eta_1,\ldots,\eta_K)} \|\nabla
  u\|_2^2 \,,
\end{equation}
where
\begin{displaymath}
  \W(\eta_1,\ldots,\eta_K) =\big\{u\in H^1_0(\Omega)\,|\,
  u\big|_{\Union_{k=1}^KB_k}\equiv1\, \big\} \,.
\end{displaymath}
We begin with the following useful bound
\begin{lemma}
\label{lem:standardbound}
  Let $C_K(\eta_1,\ldots,\eta_K)$ be defined by \eqref{eq:20}. Then
  \begin{equation}
    \label{eq:21}
C_K(\eta_1,\ldots,\eta_K) \leq \sum_{k=1}^K C_1(\eta_k) \,.
  \end{equation}
\end{lemma}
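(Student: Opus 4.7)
The plan is to build an admissible competitor for the variational problem \eqref{eq:20} out of the single-particle capacitary potentials and then use the fact that the Dirichlet energy is subadditive under pointwise maximum. First I would let $u_k \in \W(\eta_k)$ denote a minimizer of the single-ball problem, so that $C_1(\eta_k) = \|\nabla u_k\|_2^2$; the weak maximum principle applied to the harmonic extension of $u_k$ on $\Omega \setminus B_k$ gives $0 \le u_k \le 1$ everywhere, with $u_k \equiv 1$ on $B_k$ and $u_k = 0$ on $\partial\Omega$.

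Next I would set $v(x) = \max_{1 \le k \le K} u_k(x)$. Since $H_0^1(\Omega)$ is a vector lattice, $v \in H_0^1(\Omega)$, and because $u_j \equiv 1$ on $B_j$ while every other $u_k$ lies in $[0,1]$, we have $v \equiv 1$ on $\Union_{k=1}^K B_k$. Hence $v \in \W(\eta_1,\ldots,\eta_K)$, so it is admissible in \eqref{eq:20} and we obtain $C_K(\eta_1,\ldots,\eta_K) \le \|\nabla v\|_2^2$.

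It then remains to bound $\|\nabla v\|_2^2$ by $\sum_k C_1(\eta_k)$. I would partition $\Omega$, up to a null set, into the measurable sets $A_k = \{x : u_k(x) > u_j(x)\ \text{for all } j\neq k\}$, absorbing the tie set $\bigcup_{j\neq k}\{u_j = u_k\}$ arbitrarily into one of the $A_k$'s; on that tie set one has the classical Stampacchia identity $\nabla u_j = \nabla u_k$ almost everywhere. On $A_k$, $v$ locally coincides with $u_k$, so $\nabla v = \nabla u_k$ a.e., whence
\begin{displaymath}
\|\nabla v\|_2^2 = \sum_{k=1}^K \int_{A_k} |\nabla u_k|^2 \, dx \le \sum_{k=1}^K \|\nabla u_k\|_2^2 = \sum_{k=1}^K C_1(\eta_k),
\end{displaymath}
which is \eqref{eq:21}.

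There is no substantive obstacle here: the only mildly technical point is the measure-theoretic bookkeeping on the tie set $\{u_j = u_k\}$, handled by the standard fact that $\nabla(u_j - u_k) = 0$ almost everywhere on this level set. An equivalent route would be to use the test function $1 - \prod_{k=1}^K (1 - u_k)$, which is also admissible in $\W(\eta_1,\ldots,\eta_K)$, but the $\max$-based argument delivers the subadditivity most transparently and without cross-term book-keeping.
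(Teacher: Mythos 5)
Your argument is correct, and it is the classical proof of subadditivity of capacity: normalize each single-ball potential to lie in $[0,1]$, take the pointwise maximum $v=\max_k u_k$, use the lattice property of $H^1_0(\Omega)$ to get admissibility, and use Stampacchia's identity on level sets to split $\int_\Omega|\nabla v|^2$ across the cells $A_k$ where $u_k$ attains the max. The paper itself supplies no proof and simply cites Maz'ya (Proposition 4.1.3), where exactly this max-of-potentials argument is used, so you have in effect reproduced the proof that the paper defers to.

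One small caution on your closing aside: the competitor $w=1-\prod_{k}(1-u_k)$ is indeed admissible, but it is not really an ``equivalent route'' to \eqref{eq:21}. Its gradient is $\nabla w=\sum_k\bigl(\prod_{j\neq k}(1-u_j)\bigr)\nabla u_k$, and one does \emph{not} have the pointwise bound $|\nabla w|^2\le\sum_k|\nabla u_k|^2$ (take, e.g., $u_1=u_2=u$ with $u$ small, where $|\nabla w|^2\approx 4|\nabla u|^2>2|\nabla u|^2$). So that route would require a genuinely different — and less elementary — argument to recover the additive bound, whereas the max-based competitor gives it immediately. This does not affect the validity of your proof, since you only used the max construction.
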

See \cite[Proposition 4.1.3]{ma85} for the proof.  

We can now establish the following bound for $C_K$
\begin{lemma}
  Let
  \begin{equation}
\label{eq:22}
    \delta_k=\min\Big(1,\frac{d(\eta_k,\partial\Omega)}{\epsilon}-1\Big)
  \end{equation}
and $C_K(\eta_1,\ldots,\eta_K)$ be given by
  \eqref{eq:20}. Then, there exists $C>0$, such that  
  \begin{equation}
    \label{eq:23}
C_K(\eta_1,\ldots,\eta_K)\leq C \epsilon \sum_{k=1}^K (1+\ln(1/\delta_k)) \,.
  \end{equation}
\end{lemma}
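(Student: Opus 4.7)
The plan is to combine the subadditivity bound in \rlemma{lem:standardbound} with an explicit test-function construction near the boundary. Since $C_K(\eta_1,\ldots,\eta_K)\le\sum_k C_1(\eta_k)$, it suffices to establish the single-inclusion estimate
\[ C_1(\eta)\le C\epsilon\bigl(1+\ln(1/\delta)\bigr). \]

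For the interior case $d(\eta,\partial\Omega)\ge 2\epsilon$, for which $\delta=1$ and the logarithm vanishes, I would take $u(x)=\chi(x)\min(1,\epsilon/|x-\eta|)$, with $\chi\in C_c^\infty(\Omega)$ equal to $1$ on $B(\eta,d(\eta,\partial\Omega)/2)$ and vanishing near $\partial\Omega$. The Dirichlet energy is dominated by the Newtonian-tail contribution $\int_\epsilon^\infty(\epsilon^2/r^4)\,r^2\,dr\le C\epsilon$, while the cutoff shell, where $u=O(\epsilon/R)$ with $R=d(\eta,\partial\Omega)$, contributes only $O(\epsilon^2/R)=O(\epsilon)$.

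For the boundary case $\delta<1$, I would straighten $\partial\Omega$ locally near the foot of perpendicular from $\eta$ by a $C^{2,\alpha}$ chart, which introduces only bounded metric distortions independent of $\epsilon$. In straightened coordinates we may assume $\partial\Omega=\{x_3=0\}$ and $\eta=(0,0,h)$ with $h=(1+\delta)\epsilon$; set $\rho^2=x_1^2+x_2^2$. In the narrow gap $G=\{0<x_3<H(\rho),\,\rho<\epsilon\}$ below the ball, where $H(\rho)=h-\sqrt{\epsilon^2-\rho^2}\ge\delta\epsilon+\rho^2/(2\epsilon)$, I set $u(x)=x_3/H(\rho)$; this vanishes on $\{x_3=0\}$ and equals $1$ on the lower ball surface. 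With the substitution $t=\delta\epsilon+\rho^2/(2\epsilon)$ the dominant energy contribution is
\[ \int_G|\partial_{x_3}u|^2\,dx \le 2\pi\int_0^\epsilon\frac{\rho\,d\rho}{H(\rho)} \le 2\pi\epsilon\int_{\delta\epsilon}^{\delta\epsilon+\epsilon/2}\frac{dt}{t} \le C\epsilon\bigl(1+\ln(1/\delta)\bigr). \]
Outside $G$ and around the ball I would splice in the interior construction via a smooth cutoff supported in $\{\rho<C\epsilon,\,x_3<C\epsilon\}$, which adds only $O(\epsilon)$ to the energy.

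The main obstacle is checking that the transverse derivative $\partial_\rho u$ inside the gap, the cutoff gradients at $\rho\sim\epsilon$, and the curvature corrections coming from the straightening map all contribute at most $O(\epsilon)$, so that the logarithmic bound is not spoiled by any of them. The decisive inequality is $H(\rho)\ge\delta\epsilon+\rho^2/(2\epsilon)$, which converts the radial integral into a logarithm, while the $C^{2,\alpha}$-regularity of $\partial\Omega$ guarantees that the capacity is not distorted by more than a bounded factor under the straightening diffeomorphism, uniformly in $\epsilon$ and $\delta$.
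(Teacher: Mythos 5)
Your overall architecture matches the paper's: invoke the subadditivity bound $C_K\le\sum_k C_1(\eta_k)$ from \rlemma{lem:standardbound}, then estimate the single-inclusion capacity by exhibiting an explicit competitor. The paper's competitor is a cone-like profile, $\chi(|x-\eta|/\epsilon)\bigl(1-\frac{|x-\eta|-\epsilon}{|s_x-\eta|-\epsilon}\bigr)$, which interpolates linearly along each ray from $\eta$ to its first hit $s_x$ on $\partial\Omega$; your competitor is the lubrication-theory profile $u=x_3/H(\rho)$ in the thin gap beneath the ball. Both produce the logarithm from $\int_{\delta\epsilon}^{\epsilon}\frac{dt}{t}$-type integrals, so the heart of the estimate is the same.

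However, your gap construction has a genuine flaw at the edge of the gap. Set $H(\rho)=h-\sqrt{\epsilon^2-\rho^2}$ with $h=(1+\delta)\epsilon$. Then
\begin{equation*}
\partial_\rho u=-\,\frac{x_3\,H'(\rho)}{H(\rho)^2},
\qquad
H'(\rho)=\frac{\rho}{\sqrt{\epsilon^2-\rho^2}},
\end{equation*}
and the transverse energy over $G=\{0<x_3<H(\rho),\ \rho<\epsilon\}$ is
\begin{equation*}
\int_G|\partial_\rho u|^2\,dx
=\frac{2\pi}{3}\int_0^{\epsilon}\frac{\rho^3\,d\rho}{(\epsilon^2-\rho^2)\,H(\rho)}\,.
\end{equation*}
Near $\rho=\epsilon$ one has $H(\rho)\to h$ and $\epsilon^2-\rho^2\to0$, so the integrand behaves like $\frac{\epsilon^2}{h(\epsilon^2-\rho^2)}$ and the integral is \emph{logarithmically divergent}. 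In other words $u=x_3/H(\rho)$ is not in $H^1(G)$: the singularity comes from $H'(\rho)\sim(\epsilon-\rho)^{-1/2}$ where the ball becomes tangent to the vertical, not from the thinness of the gap at $\rho=0$. You flag exactly this term (``the transverse derivative $\partial_\rho u$ inside the gap'') as needing to be $O(\epsilon)$, but that check fails; as written the claimed bound on $\|\nabla u_j\|_2^2$ is false.

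The argument is repairable: restrict the gap profile to $\rho<\epsilon/2$ (where $H'$ is bounded by a constant and the transverse energy is indeed $O(\epsilon)$), and in $\epsilon/2<\rho<2\epsilon$ use a crude competitor with gradient $O(1/\epsilon)$ on a set of volume $O(\epsilon^3)$; the splice costs only $O(\epsilon)$ because the gap thickness there is $\ge\epsilon/8$. The paper's ray-interpolation competitor sidesteps this issue automatically, because its gradient scales like $1/(|s_x-\eta|-\epsilon)$ uniformly along each ray and has no extra singularity at the equator; the cutoff $\chi(|x-\eta|/\epsilon)$ then localizes the whole construction to the shell $\epsilon<|x-\eta|<2\epsilon$, and the change of variable $s=|s_x-\eta|-\epsilon$ (ranging from $\delta\epsilon$ to order $\epsilon$) produces the same $\ln(1/\delta)$ in one stroke.
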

\begin{proof}
  In view of \eqref{eq:21} suffices it to show that
  \begin{equation}
\label{eq:24}
    C_1(\eta) \leq C \epsilon \ln(1/\delta) \,,
  \end{equation}
where $\delta=d(\eta,\partial\Omega)/\epsilon-1$
To this end let $x\in \Omega$. Let further $l_x$ denote the straight ray emanating
from  $\eta$ and passing through $x$. Let $s_x$ denote the closest (to
$x$) intersection point of $l_x$ with $\partial\Omega$. Set then
\begin{displaymath}
  \tilde{u}(x) = \chi(|x-\eta|/\epsilon)
  \begin{cases}
    1 & x\in B(\eta,\epsilon) \\
    1- \frac{|x-\eta-\epsilon|}{|s_x-\eta-\epsilon|} &x\in\Omega\setminus B(\eta,\epsilon) \,,
  \end{cases}
\end{displaymath}
The cutoff function $\chi\in C^1(\R_+;[0,1])$ satisfies
\begin{equation}
  \label{eq:25}
\chi(x) =
\begin{cases}
  1 & x\leq1 \\
  0 & x\geq2 
\end{cases}
|\chi^\prime|\leq C \,.
\end{equation}
It can be easily verified that
\begin{displaymath}
  \|\nabla\tilde{u}\|_2^2\leq C\epsilon\ln(1/\delta) \,,
\end{displaymath}
from which \eqref{eq:24}, and then \eqref{eq:23} easily follow.
\end{proof}

\section{Single inclusion}
\label{sec:single-inclusion}
We next define the one-particle problem. Let $B=B(\eta,\epsilon)$ and
\begin{equation}
\label{eq:26}
  \Omega_\epsilon = \{ x\in \Omega \,|\, d(x,\partial\Omega)>\epsilon\}\,.
\end{equation}
For every $\eta\in\Omega_\epsilon$, let
$\psi_1(\cdot,\eta):\Omega\to\R$ and $C_1\in\R$ denote the unique solution of
\begin{equation}
  \label{eq:27}
  \begin{cases}
    \LL \psi_1(\cdot,\eta) =0 & \text{in } \Omega\setminus B \\
    \psi_1(\cdot,\eta) = f & \text{ on } \partial\Omega \\
    \psi_1=C_1 & \text{in } B\,, \\
    \int_{\partial B} a \frac{\partial\psi_1}{\partial\nu} \,ds =0 \,.   
\end{cases}
\end{equation}
Set 
\begin{equation}
\label{eq:28} 
 \phi_1(\cdot,\eta):=\psi_1(\cdot,\eta)-\bar{\phi} \,.
\end{equation}

For all $\eta\in\Omega_\epsilon$, define $\phi_0(\cdot,\eta):\Omega\to\R$ as
\begin{equation}
\label{eq:29}
  \phi_0 (x,\eta)=
  \begin{cases}
-(x-\eta)\cdot\nabla\bar{\phi}(\eta) \frac{\epsilon^3}{|x-\eta|^3}+ C_a\frac{\epsilon^3}{r} &
    x\in\Omega\setminus B(\eta,\epsilon) \\
 -(x-\eta)\cdot\nabla\bar{\phi}(\eta) +C_a\epsilon^2 &
    x\in B(\eta,\epsilon)
  \end{cases}\,.
\end{equation}
Where $C_a$ is so chosen that
\begin{equation}
\label{eq:30}
  \int_{\partial B(\eta,\epsilon)}a(\xi)\frac{\partial\phi_0}{\partial\nu}(\xi,\eta)\,ds_\xi =0\,.
\end{equation}
It can be easily verified that
\begin{displaymath}
  C_a(\eta) = \frac{2}{\epsilon}\frac{\int_{
    B(\eta,\epsilon)}\nabla a(\xi)\cdot\nabla\bar{\phi}(\eta)\,d\xi }{ \int_{\partial B(\eta,\epsilon)}a(\xi)\,ds_\xi }\,.
\end{displaymath}
As $a\in C^{1,\alpha}$ we then obtain that
\begin{equation}
\label{eq:31}
  C_a = \frac{2}{3}\frac{\nabla\bar{\phi}(\eta)\cdot\nabla a(\eta)}{a(\eta)}+ \OO(\epsilon^\alpha)\,.
\end{equation}

We can now state the following
\begin{lemma}
\label{lem:v1}
  Let $\phi_1$ be given by \eqref{eq:28}. Define for each $\eta\in\Omega_\epsilon$, $u_1(\cdot,\eta):\Omega\to\R$ by
  \begin{equation}
    \label{eq:32}
v_1(x,\eta) = \phi_1(x,\eta) - \phi_0(x,\eta)\,.
  \end{equation}
Then,
% \begin{equation}
%   \label{eq:33}
% \|v_1\|_\infty \leq C\max\Big(\epsilon^2,\frac{\epsilon^3}{d(\eta,\partial\Omega)^2}\Big)
% \end{equation}
\begin{equation}
  \label{eq:34}
\|\nabla v_1(\cdot,\eta)\|_2\leq  C(\Omega) \Big( \epsilon^{5/2} + \frac{\epsilon^3}{d(\eta,\partial\Omega)^{3/2}}\Big) \,.
\end{equation}
\end{lemma}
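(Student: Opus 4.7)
My plan is to split $\Omega = B\cup(\Omega\setminus B)$ and handle each region separately, exploiting the explicit ``dipole plus monopole'' structure of $\phi_0$ in \eqref{eq:29}. Inside $B$, $\phi_0$ is affine with $\nabla\phi_0 = -\nabla\bar\phi(\eta)$, while $\phi_1 = C_1-\bar\phi$, so $\nabla v_1(x) = \nabla\bar\phi(\eta)-\nabla\bar\phi(x)$. The $C^{1,\alpha}$ regularity of $\bar\phi$ yields $|\nabla v_1|\le C|x-\eta|\le C\epsilon$ on $B$, hence $\|\nabla v_1\|_{L^2(B)}\le C\epsilon^{5/2}$, which accounts for the first term in \eqref{eq:34}.

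On $\Omega\setminus B$ I set $\Phi := \bar\phi + \phi_0$; using $\Delta\phi_0 = 0$ in $\Omega\setminus\{\eta\}$ and $\LL\bar\phi = 0$, a direct computation gives $\LL\Phi = -\nabla a\cdot\nabla\phi_0$. Introduce the $\LL$-harmonic function $\psi_\Phi$ in $\Omega\setminus B$ matching $\Phi$ on $\partial\Omega\cup\partial B$, and write
\[
v_1 = \psi_1-\Phi = (\psi_\Phi-\Phi)+(\psi_1-\psi_\Phi) =: h+\delta.
\]
The piece $h$ vanishes on both boundary components and satisfies $\LL h = \nabla a\cdot\nabla\phi_0$; by the standard energy estimate combined with the embedding $L^{6/5}\hookrightarrow H^{-1}$ in three dimensions, $\|\nabla h\|_2\le C\|\nabla\phi_0\|_{L^{6/5}(\Omega\setminus B)}$. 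The pointwise bound $|\nabla\phi_0|\le C\epsilon^3/|x-\eta|^3$ together with a direct integration in spherical coordinates around $\eta$ shows this is $O(\epsilon^{5/2})$.

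The $\LL$-harmonic piece $\delta$ has boundary data $-\phi_0$ on $\partial\Omega$ and $C_1-\bar\phi(\eta)-C_a\epsilon^2-r(x)$ on $\partial B$, where $r(x):=\bar\phi(x)-\bar\phi(\eta)-(x-\eta)\cdot\nabla\bar\phi(\eta)$ satisfies $|r|\le C\epsilon^2$ on $\partial B$ with tangential derivatives of size $O(\epsilon)$. Splitting $\delta$ into two Dirichlet problems, the $\partial\Omega$-part is controlled by $\|\phi_0\|_{H^{1/2}(\partial\Omega)}$, which I estimate by applying the trace theorem to $\Omega\setminus B(\eta,d/2)$ and integrating using $|\phi_0|+|\nabla\phi_0|\cdot|x-\eta|\le C\epsilon^3/|x-\eta|^2$; the gradient term dominates and produces $C\epsilon^3/d^{3/2}$. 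The $\partial B$-part is handled by interpolation: $\|r\|_{H^{1/2}(\partial B)}\le C\|r\|_{L^2(\partial B)}^{1/2}\|r\|_{H^1(\partial B)}^{1/2}\le C\epsilon^{5/2}$, after absorbing the constant $C_1-\bar\phi(\eta)-C_a\epsilon^2$, whose size is $O(\epsilon^2)$ by the flux balance (contributing at worst $O(\epsilon^3)$ to the $H^{1/2}$ norm on a ball of radius $\epsilon$).

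The main obstacle I anticipate is the precise justification of the flux condition forcing $|C_1-\bar\phi(\eta)-C_a\epsilon^2|=O(\epsilon^2)$, which couples the scalar $C_1$ to the full boundary-value problem; I expect to absorb it into the constant part of the data of $\delta$ on $\partial B$ and rely on the particular choice of $C_a$ in \eqref{eq:30}--\eqref{eq:31}. A secondary technical point is the borderline $L^{6/5}$ estimate for the bulk source $\nabla a\cdot\nabla\phi_0$, which only just converges due to the exact $|x-\eta|^{-3}$ decay of $\nabla\phi_0$ and the three-dimensional volume element.
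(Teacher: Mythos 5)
Your decomposition is a genuinely different route from the paper's: the paper never solves any auxiliary PDEs but instead observes that $v_1$ is (essentially) the minimizer of $I_1$ over the class $X_1(-\phi_0,-\bar\phi_2)$ and then exhibits explicit test functions whose Dirichlet energy it computes directly, with two separate constructions according as $d(\eta,\partial\Omega)\geq 2\epsilon$ or $<2\epsilon$; you instead split $v_1=h+\delta$ and invoke trace/duality estimates for each piece. One real merit of your version is that you correctly notice that $v_1$ is \emph{not} $\LL$-harmonic — since $\LL\phi_0=-\nabla a\cdot\nabla\phi_0\neq 0$ — and you isolate this forcing in $h$ and control it through $L^{6/5}\hookrightarrow H^{-1}$ plus the $|x-\eta|^{-3}$ decay of $\nabla\phi_0$; this is clean and gives $\|\nabla h\|_2\lesssim\epsilon^{5/2}$, and in fact it is more careful than the paper's \eqref{eq:35}, which simply asserts $\LL v_1=0$ and proceeds variationally (the paper's final bound is still right, but only because the missing source produces a correction of exactly this order, which should be acknowledged). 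Your $\partial\Omega$-trace estimate for $\delta$ also recovers the $\epsilon^3/d^{3/2}$ term in essentially the same way as the paper's $\zeta\phi_0(P\cdot,\eta)$ computation.

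The genuine gap is exactly what you flag: the unknown constant $K=C_1-\bar\phi(\eta)-C_a\epsilon^2$ on $\partial B$. The constant piece of the $\partial B$ data contributes $\sim|K|\,\capa_a(B)^{1/2}\sim|K|\epsilon^{1/2}$ to $\|\nabla\delta\|_2$, so you need $|K|\lesssim\epsilon^2+\epsilon^3/d^2$. The elementary a priori bound $\|\phi_1\|_\infty\leq C\epsilon$ from \eqref{eq:48} gives only $|K|=O(\epsilon)$, hence only $O(\epsilon^{3/2})$ — one full power of $\epsilon$ short. To close the gap you would have to feed the flux condition $\int_{\partial B}a\,\partial v_1/\partial\nu=0$ through your own decomposition, showing that the $\partial B$-fluxes of $h$, of the $\partial\Omega$-Dirichlet piece, and of the $\bar\phi_2$-piece are each $O(\epsilon^3+\epsilon^4/d^2)$, and then solving for $K$ by dividing by $\capa_a(B)\sim\epsilon$. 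None of this is spelled out, and it is precisely the point the paper's variational formulation is engineered to avoid: in $X_1$ the constant on $\partial B$ is a free parameter, so one may choose it at will in the test function, and the minimizer automatically does at least as well. A secondary gap is the near-boundary regime $d(\eta,\partial\Omega)<2\epsilon$, for which your trace argument on $\Omega\setminus B(\eta,d/2)$ is unavailable (that set no longer contains an $\epsilon$-neighborhood of $\partial B$); the paper needs a separate, rather elaborate test function (the $w_0$, $\tau$, $\tau^*$, $\sigma$ construction following \eqref{eq:44}) precisely for this case, and your proposal does not address it.
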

\begin{proof}
  By \eqref{eq:27}, \eqref{eq:28}, and \eqref{eq:32} we have that
  $(v_1,C_1)$ is the solution of
  \begin{equation}
\label{eq:35}
  \begin{cases}
    \LL v_1(\cdot,\eta) =0 & \text{in } \Omega\setminus B \\
    v_1(\cdot,\eta) = -\phi_0(\cdot,\eta) & \text{ on } \partial\Omega \\
    v_1=C_1-\bar{\phi}_2(\cdot,\eta) & \text{in } B\,, \\
    \int_{\partial B} a\frac{\partial v_1}{\partial\nu} \,ds =0  \,,   
\end{cases}
  \end{equation}
where
\begin{equation}
\label{eq:36} 
\bar{\phi}_2(x,\eta)= \bar{\phi}(x)- \bar{\phi}(\eta)-(x-\eta)\cdot\nabla\bar{\phi}(\eta) \,.
\end{equation}
 It can be easily verified that $v_1$ is the minimizer in
 $X_1(-\phi_0,-\bar{\phi}_2)$ of \eqref{eq:15}.

Suppose first that $d(\eta,\partial\Omega)\geq2\epsilon$. Set then
\begin{displaymath}
  w(x)= \zeta(t) \phi_0(s,\eta)(1-\chi(|x-\eta|/\epsilon)) + \chi(|x-\eta|/\epsilon)\bar{\phi}_2(x) \,,
\end{displaymath}
in which $t=d(x,\partial\Omega)$ and $s$ is the projection of $x$ on $\partial\Omega$, which is
well-defined for all $t<\delta_0$ (where  $\delta_0$ is a property of the
smooth boundary). The cutoff function $\zeta\in C^1(\R_+;[0,1])$ is 
supported on $[0,\delta]$, for some $0<\delta<\delta_0$, and satisfies 
$|\zeta^\prime|\leq C/\delta$, and $\chi$ is given by \eqref{eq:25}. 
We then have
\begin{equation}
\label{eq:37}
  \|\nabla w\|_{L^2(\Omega\setminus B)} \leq \|\nabla(\zeta\phi_0(P\cdot,\eta))\|_2 + \|\zeta\phi_0(P\cdot,\eta)\nabla\chi\|_2 +
  \|\chi\nabla\bar{\phi}_2\|_2 + \|\bar{\phi}_2\nabla\chi\|_2 \,,
\end{equation}
in which $P:\Omega\setminus\Omega_{\delta_0}\to\partial\Omega$ is the projection on the boundary
($P(x)=s$), where $\Omega_{\delta_0}$ is given by \eqref{eq:10}. 

For the first term on the right-hand-side of \eqref{eq:37} we have 
\begin{equation}
\label{eq:38}
   \|\nabla(\zeta\phi_0)\|_2^2 \leq  \frac{C}{\delta} 
   \|\phi_0(\cdot,\eta)\|_{L^2(\partial\Omega)}^2 + C\delta\|\nabla_s\phi_0(\cdot,\eta)\|_{L^2(\partial\Omega)}^2 \,,
\end{equation}
where $\nabla_s$ denotes the tangential derivative on $\partial\Omega$. By
\eqref{eq:29} and the smoothness of $\partial\Omega$ we have
\begin{multline}
\label{eq:39}
  \|\phi_0(\cdot,\eta)\|_{L^2(\partial\Omega)}^2 \leq C \epsilon^6 \int_{\partial\Omega}
  \frac{ds_\xi}{|\xi-\eta|^4} \leq\\ C \epsilon^6 \int_{\R^2}
  \frac{d\xi}{[|\xi|^2+d(\eta,\partial\Omega)^2]^2} \leq C \frac{\epsilon^6}{d(\eta,\partial\Omega)^2} \,.
\end{multline}
In a similar manner we obtain that
\begin{displaymath}
  \|\nabla_s\phi_0(\cdot,\eta)\|_{L^2(\partial\Omega)}^2 \leq C \frac{\epsilon^6}{d(\eta,\partial\Omega)^4} \,,
\end{displaymath}
which together with \eqref{eq:39} and \eqref{eq:38} yields
\begin{displaymath}
  \|\nabla(\zeta\phi_0)\|_2^2  \leq \frac{C}{\delta} \frac{\epsilon^6}{d(\eta,\partial\Omega)^2}
   + C\delta \frac{\epsilon^6}{d(\eta,\partial\Omega)^4} \,.
\end{displaymath}
Upon choosing
\begin{displaymath}
  \delta = \min\big(d(\eta,\partial\Omega),\delta_0\big) \,,
\end{displaymath}
we obtain
\begin{equation}
\label{eq:40}
 \|\nabla(\zeta\phi_0)\|_2   \leq C \frac{\epsilon^3}{d(\eta,\partial\Omega)^{3/2}} \,.
\end{equation}

For the second term on the right-hand-side of \eqref{eq:37} we have
\begin{equation}
\label{eq:41}
  \|\zeta\phi_o(P\cdot,\eta)\nabla\chi\|_2\leq \|\phi_0\|_{L^\infty(\partial\Omega)}\|\nabla\chi\|_2 \leq
  C\frac{\epsilon^{7/2}}{d(\eta,\partial\Omega)^2} \,.
\end{equation}
For the last two terms on the right-hand-side of
\eqref{eq:37} we easily obtain, using the fact that $\bar{\phi}\in C^{2,\alpha}(\bar(\Omega)$,
\begin{displaymath}
  \|\chi\nabla\bar{\phi}_2\|_2 + \|\bar{\phi}_2\nabla\chi\|_2  \leq
  C(\epsilon^{3/2}\|\nabla\bar{\phi}_2\|_\infty + \epsilon^{1/2}\|\bar{\phi}_2\|_\infty
  )\leq C\epsilon^{5/2} \,.
\end{displaymath}
Combining the above with \eqref{eq:41}, \eqref{eq:40},   yields
\begin{displaymath}
 \|\nabla w\|_{L^2(\Omega\setminus B)}\leq C(\Omega,\sigma) \Big( \epsilon^{5/2} + \frac{\epsilon^3}{d(\eta,\partial\Omega)^{3/2}}\Big) \,.
\end{displaymath}
By \eqref{eq:35} we have that
\begin{equation}
\label{eq:42}
   \|\nabla v_1\|_{L^2(B)}=\|\nabla \bar{\phi}_2\|_{L^2(B)} \leq C\varepsilon^{5/2} \,.
\end{equation}
Hence, since $w\in X_1(-\phi_0,-\bar{\phi}_2)$ we have that
\begin{equation}
    \label{eq:43}
 \|\nabla v_1\|_2\leq CI_1(W)+C\epsilon^{5/2}\leq C(\Omega) \Big( \epsilon^{5/2} + \frac{\epsilon^3}{d(\eta,\partial\Omega)^{3/2}}\Big) \,.
\end{equation}

Consider next the case $d(\eta,\partial\Omega)\leq2\epsilon$. Here we set
\begin{equation}
  \label{eq:44}
w_0(x,\eta) = -\phi_0(x,\eta) +\frac{[\bar{\phi}_2(\sigma,\eta)-\phi_0(\sigma,\eta)]-
  [\bar{\phi}_2(\tau,\eta)-\phi_0(\tau,\eta)]}{|\tau^*-\tau|}|\tau^*-x|\,,
\end{equation}
where $\sigma=P(\eta)$, $\tau\in\partial B$ is given by
\begin{displaymath}
  \tau=\eta+ \epsilon\frac{x-\eta}{|x-\eta|}\,,
\end{displaymath}
and $\tau^*\in\partial\Omega$ is chosen so that
\begin{displaymath}
 \frac{\tau^*-\eta}{|\tau^*-\eta|}= \frac{x-\eta}{|x-\eta|}\,.
\end{displaymath}
If $\Omega$ is convex, then $\tau^*$ is uniquely defined. Otherwise we choose
$\tau^*$ as the closest point to $\eta$ on $\partial\Omega$ in the above direction. We
then choose 
\begin{displaymath}
   v(x,\eta)= -\zeta(t) \phi_0(s,\eta)(1-\chi(|x-\eta|/\epsilon)) + \chi(|x-\eta|/\epsilon)w_0(x) \,,
\end{displaymath}
where $\zeta$ and $\chi$ are as above. Note that since
$\bar{\phi}_2(\sigma,\eta)-\phi_0(\sigma,\eta)$ is independent of $x$ we have
$v\in X_1(-\phi_0,-\bar{\phi}_2)$. Clearly,
\begin{multline}
\label{eq:45}
  \|\nabla v\|_{L^2(\Omega\setminus B)}\leq \|\nabla(\zeta\phi_0(P\cdot,\eta))\|_{L^2(\Omega\setminus B)} + \\ \|\zeta\phi_0(P\cdot,\eta)\nabla\chi\|_2 +
  \|\chi\nabla w_0\|_{L^2(\Omega\setminus B)} + \|w_0\nabla\chi\|_2\,.
\end{multline}
For the third term we have
\begin{multline*}
    \|\chi\nabla w_0\|_{L^2(\Omega\setminus B)}  \leq \|\chi\nabla \phi_0\|_{L^2(\Omega\setminus B)} +
    \Big\|\chi\frac{\bar{\phi}_2(\sigma,\eta)-\bar{\phi}_2(\tau,\eta)}{|\tau^*-\tau|}\Big\|_{L^2(\Omega\setminus B)}\\+
    \Big\|\chi\frac{\phi_0(\sigma,\eta)-\phi_0(\tau,\eta)}{|\tau^*-\tau|}\Big\|_{L^2(\Omega\setminus B)}
\end{multline*}
Estimating the boundary by its tangent plane at $s$ we obtain
\begin{multline*}
   \Big\|\chi\frac{\bar{\phi}_2(\sigma)-\bar{\phi}_2(\tau)}{|\tau^*-\tau|}\Big\|_{L^2(\Omega\setminus B)}^2 \leq
   C\epsilon^2 \Big\|\chi\frac{|\sigma-\tau|}{|\tau^*-\tau|}\Big\|_{L^2(\Omega\setminus B)}^2 \\ \leq
   C\epsilon^2\int_0^{\pi/4}\int_\epsilon^{h_\epsilon(\phi)}\frac{[\epsilon^2+(d+\epsilon)^2-2\epsilon(d+\epsilon)\cos\phi]\rho^2\sin\phi\cos^2\phi
     }{[(\epsilon+d)-\epsilon\cos\phi]^2}\,d\rho\,d\phi \leq C\epsilon^5\,,
\end{multline*}
where $h_\epsilon(\phi) = (d+\epsilon)\min(\sec \phi,2)$. Similarly, we obtain that
\begin{displaymath}
  \Big\|\chi\frac{\phi_0(\sigma,\eta)-\phi_0(\tau,\eta)}{|\tau^*-\tau|}\Big\|_{L^2(\Omega/B)}\leq C\epsilon^{3/2} \,.
\end{displaymath}
Hence,
\begin{equation}
\label{eq:46}
    \|\chi\nabla w_0\|_{L^2(\Omega\setminus B)}  \leq C\epsilon^{3/2} \,.
\end{equation}
In a similar manner we obtain that
\begin{displaymath}
  \|w_0\nabla\chi\|_2\leq C\epsilon^{3/2}
\end{displaymath}
Substituting the above, together with \eqref{eq:40}, \eqref{eq:41}, and
\eqref{eq:46} into \eqref{eq:45} then yields
\begin{displaymath}
   \|\nabla v\|_{L^2(\Omega\setminus B)}\leq C\epsilon^{3/2}\,,
\end{displaymath}
which together with \eqref{eq:42} and \eqref{eq:43}
yields \eqref{eq:34}.
\end{proof}
An immediate corollary follows
\begin{corollary}
There exists $C(\Omega,f,\lambda,\Lambda)>0$ such that
  \begin{equation}
\label{eq:47}
    \|\nabla\phi_1(\cdot,\eta)\|_2 \leq C\epsilon^{3/2} \,.
  \end{equation}
\end{corollary}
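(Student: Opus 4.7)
The plan is to combine the estimate from \rlemma{lem:v1} for $v_1 = \phi_1 - \phi_0$ with a direct computation of $\|\nabla\phi_0\|_2$, using the fact that $\eta\in\Omega_\eps$ ensures $d(\eta,\partial\Omega)\geq\eps$. By the triangle inequality,
\begin{displaymath}
  \|\nabla\phi_1(\cdot,\eta)\|_2 \leq \|\nabla v_1(\cdot,\eta)\|_2 + \|\nabla\phi_0(\cdot,\eta)\|_2,
\end{displaymath}
so it suffices to bound each term by $C\eps^{3/2}$.

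For the first term, since $d(\eta,\partial\Omega)\geq\eps$, the bound \eqref{eq:34} yields
\begin{displaymath}
  \|\nabla v_1\|_2 \leq C\Big(\eps^{5/2} + \frac{\eps^3}{d(\eta,\partial\Omega)^{3/2}}\Big) \leq C\Big(\eps^{5/2} + \frac{\eps^3}{\eps^{3/2}}\Big) \leq C\eps^{3/2},
\end{displaymath}
which is the desired order.

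For the second term, I split the integral over $B(\eta,\eps)$ and $\Omega\setminus B(\eta,\eps)$. Inside $B(\eta,\eps)$, formula \eqref{eq:29} gives $\nabla\phi_0 = -\nabla\bar\phi(\eta)$, a constant, and since $\bar\phi\in C^{2,\alpha}(\bar\Omega)$ by elliptic regularity for \eqref{eq:18} (with constant depending on $\Omega, f, \lambda, \Lambda$), we obtain $\|\nabla\phi_0\|_{L^2(B)}\leq C\|\nabla\bar\phi\|_\infty|B|^{1/2}\leq C\eps^{3/2}$. Outside $B(\eta,\eps)$, the dipole-type expression in \eqref{eq:29} together with \eqref{eq:31} yields, writing $r=|x-\eta|$, a pointwise bound $|\nabla\phi_0|\leq C\eps^3/r^3$ for some $C$ depending on $\|\nabla\bar\phi\|_\infty$ and $\|\nabla a\|_\infty/\lambda$. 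Passing to spherical coordinates,
\begin{displaymath}
  \|\nabla\phi_0\|_{L^2(\Omega\setminus B)}^2 \leq C\eps^6\int_\eps^{\mathrm{diam}(\Omega)} \frac{dr}{r^4} \leq C\eps^3,
\end{displaymath}
so $\|\nabla\phi_0\|_{L^2(\Omega\setminus B)}\leq C\eps^{3/2}$ as well.

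Combining the two bounds gives \eqref{eq:47}. There is no real obstacle here; the proof is just assembling the two estimates, with the one mildly non-obvious point being the observation that $\eta\in\Omega_\eps$ forces $d(\eta,\partial\Omega)\geq\eps$, which is what makes the singular term in \eqref{eq:34} harmless.
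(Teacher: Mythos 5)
Your proof is correct and is the one the paper clearly intends: the corollary is stated as "immediate" after Lemma~\ref{lem:v1} precisely because $\phi_1=v_1+\phi_0$, the lemma handles $\nabla v_1$ (with the $\eta\in\Omega_\epsilon$ observation taming the boundary term, just as you note), and the dipole formula \eqref{eq:29} gives $\|\nabla\phi_0\|_2\leq C\epsilon^{3/2}$ by direct integration.
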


We continue by the following simple result
\begin{lemma}
  There exists $C(\Omega,f)>0$ such that
\begin{equation}
\label{eq:48}
  \|\phi_1(\cdot,\eta)\|_\infty \leq C\epsilon \,.
\end{equation}
\end{lemma}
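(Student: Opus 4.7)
The plan is to use the maximum principle to reduce the $L^\infty$-estimate to a bound on $|C_1-\bar\phi(\eta)|$, and then to obtain this bound by expressing $C_1$ as a convex weighted average of the values of $\bar\phi$ on $\partial B$.

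Since $\LL\phi_1=0$ in $\Omega\setminus B$ with $\phi_1=0$ on $\partial\Omega$ and $\phi_1=C_1-\bar\phi$ on $\partial B$, the standard maximum principle for the uniformly elliptic operator $\LL$ yields $\|\phi_1\|_{L^\infty(\Omega\setminus B)}\leq\sup_{\partial B}|C_1-\bar\phi|$. Inside $B$ we have $\phi_1(x)=C_1-\bar\phi(x)$ directly, and the regularity $\bar\phi\in C^{2,\alpha}(\bar\Omega)$ gives $|\bar\phi(x)-\bar\phi(\eta)|\leq C\epsilon$ throughout $B$. Consequently $\|\phi_1\|_\infty\leq|C_1-\bar\phi(\eta)|+C\epsilon$, and everything reduces to proving the estimate $|C_1-\bar\phi(\eta)|\leq C\epsilon$.

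To obtain this, I would introduce the (weighted) capacitary potential $u_1\in H^1(\Omega\setminus B)$, defined as the unique solution of $\LL u_1=0$ in $\Omega\setminus B$ with $u_1=0$ on $\partial\Omega$ and $u_1=1$ on $\partial B$. Two applications of Green's identity on $\Omega\setminus B$ -- one to the pair $(u_1,\psi_1)$ and one to $(u_1,\bar\phi)$ -- together with the flux condition $\int_{\partial B}a\partial_\nu\psi_1\,ds=0$ built into \eqref{eq:27}, and the fact that $\int_{\partial B}a\partial_\nu\bar\phi\,ds=0$ (which follows from $\LL\bar\phi=0$ on all of $\Omega$ via the divergence theorem on $B$), combine to yield
\[
C_1\int_{\partial B}a\,\partial_\nu u_1\,ds \;=\; \int_{\partial B}a\bar\phi\,\partial_\nu u_1\,ds.
\]
The Hopf boundary point lemma, applied to $1-u_1\geq 0$ at its minimum on $\partial B$, guarantees that $\partial_\nu u_1$ has constant sign on $\partial B$. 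Hence this identity realises $C_1$ as a convex weighted average of the boundary values of $\bar\phi$ over $\partial B$.

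The conclusion is then immediate:
\[
|C_1-\bar\phi(\eta)|\leq\sup_{\xi\in\partial B}|\bar\phi(\xi)-\bar\phi(\eta)|\leq\epsilon\,\|\nabla\bar\phi\|_{L^\infty(\Omega)}\leq C\epsilon,
\]
which combined with the reduction above gives \eqref{eq:48}. The only nontrivial point is the derivation of the formula for $C_1$: it requires careful bookkeeping of the orientations of the outward normal on the two boundary components of the perforated domain $\Omega\setminus B$, but once the two Green's identities are written down the cancellation of the $\int_{\partial\Omega}f\,a\partial_\nu u_1$ contributions is forced by the two flux conditions.
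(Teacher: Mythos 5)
Your proof is correct, but it takes a genuinely different route from the paper. The paper bounds $|C_1-\bar C|$ by a variational/capacity argument: it lower bounds the Dirichlet energy of $\phi_1$ by $4\pi\lambda|C_1-\bar C|^2\epsilon$ (the extremal capacity of a ball in $\R^3$) and then invokes the previously established estimate $\|\nabla\phi_1\|_2^2\leq C\epsilon^3$ from \eqref{eq:47}, so its proof rests on the earlier Lemma~\ref{lem:v1}. You instead represent $C_1$ directly: writing Green's identities on $\Omega\setminus B$ for the pairs $(u_1,\psi_1)$ and $(u_1,\bar\phi)$, where $u_1$ is the capacitary potential of $B$ in $\Omega$, and using the two flux conditions $\int_{\partial B}a\partial_\nu\psi_1\,ds=0$ and $\int_{\partial B}a\partial_\nu\bar\phi\,ds=0$ (the latter via $\LL\bar\phi=0$ and the divergence theorem on $B$) to cancel the $\int_{\partial\Omega}af\partial_\nu u_1$ contributions, giving $C_1=\big(\int_{\partial B}a\bar\phi\,\partial_\nu u_1\big)/\big(\int_{\partial B}a\,\partial_\nu u_1\big)$. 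Since $0<u_1<1$ in $\Omega\setminus B$ and $u_1=1$ on the smooth boundary $\partial B$, the Hopf boundary point lemma (valid here since $a\in C^{1,\alpha}$ and $B$ satisfies the interior sphere condition) gives $\partial_\nu u_1$ of one sign, so $C_1$ is a genuine weighted average of $\bar\phi$ on $\partial B$ and $|C_1-\bar\phi(\eta)|\leq\epsilon\|\nabla\bar\phi\|_\infty$. The remaining reduction via the maximum principle is the same as the paper's. Your argument is self-contained and does not require the $H^1$ gradient estimate, which is a real simplification; on the other hand, the paper's variational mechanism is reused almost verbatim to obtain the two-inclusion $L^\infty$ bound \eqref{eq:70}, where the Hopf-lemma averaging idea does not transfer directly because the constants $C_1,C_2$ are coupled across the two inclusions.
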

\begin{proof}
By \eqref{eq:27} we have that
    \begin{equation}
\label{eq:49}
  \begin{cases}
    \LL \phi_1(\cdot,\eta) =0 & \text{in } \Omega\setminus B \\
    \phi_1(\cdot,\eta) = 0 & \text{ on } \partial\Omega \\
    \phi_1=C_1-\bar{\phi} & \text{in } B\,, \\
    \int_{\partial B} a\frac{\partial \phi_1}{\partial\nu} \,ds =0 \,. 
\end{cases}
  \end{equation}
It can be easily verified that $\phi_1$ is the minimizer of
\eqref{eq:15} in $X_1(0,-\bar{\phi})$
For fixed $C\in\R$ denote by $w_C$ the minimizer of \eqref{eq:15} in
$\HH_1(C,0,-\bar{\phi})$. Clearly,
\begin{displaymath}
   I_1(w_C) \geq \lambda\inf_{
     \begin{subarray}{c}
w\in H^1(\R^3\setminus B) \\
(w-w_C)|_{\partial B}= 0    
     \end{subarray}}
\|\nabla w\|_{L^2(\R^3\setminus B)}^2    \,.
\end{displaymath}
Let $\bar{C}=(\bar{\phi})_{\partial B}$, where $(\cdot)_U$ denotes the average on $U$. Since
\begin{displaymath}
   \inf_{     \begin{subarray}{c}
w\in H^1(\R^3\setminus B) \\
(w-w_C)|_{\partial B}= 0    
     \end{subarray}} \|
\nabla w\|_{L^2(\R^3\setminus B)}^2\geq4\pi|C-\bar{C}|^2\epsilon \,,
\end{displaymath}
and since $\phi_1=w_{C_1}$, we obtain from \eqref{eq:47} that
\begin{displaymath}
  4\pi\lambda|C_1-\bar{C}|^2\epsilon\leq   \|\nabla\phi_1(\cdot,\eta)\|_2^2 \leq C\epsilon^3\,.
\end{displaymath}
Hence,
\begin{displaymath}
  \|C_1-\bar{\phi}\|_{L^\infty(\partial B)} \leq  \|\bar{C}-\bar{\phi}\|_{L^\infty(\partial B)} +
  |C_1-\bar{C}| \leq C\epsilon \,.
\end{displaymath}
The lemma now follows from the maximum principle, both inside and
outside $B$.
\end{proof}

We next derive a local $L^2$ estimate for $\nabla\phi_1$.
\begin{lemma}
\label{lem:2.2}
   Let $\phi_1$ be given by \eqref{eq:28}. Then, for all
   $(z,\eta)\in\Omega\times\Omega_\epsilon$,
        \begin{equation}
   \label{eq:50}
\|\nabla\phi_1(\cdot,\eta)\|_{L^2\big(B(z,\epsilon)\cap\Omega\big)} \leq
C(\Omega,f,\lambda,\Lambda)\frac{\epsilon^{9/2}}{|z-\eta|^3}\,,
  \end{equation}
\end{lemma}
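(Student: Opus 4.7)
The plan is to split the estimate into two regimes according to whether $|z-\eta|$ is comparable to or much larger than $\epsilon$. When $|z-\eta|\le 2\epsilon$ one has $\epsilon^{9/2}/|z-\eta|^{3}\ge c\,\epsilon^{3/2}$, so \eqref{eq:50} follows at once from the global bound $\|\nabla\phi_1\|_{2}\le C\epsilon^{3/2}$ established in the corollary containing \eqref{eq:47}. The real work lies in the far-field regime $|z-\eta|>2\epsilon$, where one has to extract the dipolar pointwise decay of $\nabla\phi_1$.

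For this far-field case I would start from the one-particle specialization of \eqref{eq:19},
\begin{equation*}
\phi_1(x,\eta)=\int_{\partial B}G(x,\xi)\,a(\xi)\frac{\partial\phi}{\partial\nu}(\xi,\eta)\,ds_\xi,
\end{equation*}
and subtract $G(x,\eta)\int_{\partial B}a\,\partial_\nu\phi\,ds_\xi$, which vanishes by the zero-flux condition in \eqref{eq:27}, to get
\begin{equation*}
\phi_1(x,\eta)=\int_{\partial B}\bigl[G(x,\xi)-G(x,\eta)\bigr]\,a(\xi)\,\frac{\partial\phi}{\partial\nu}(\xi,\eta)\,ds_\xi.
\end{equation*}
Differentiating in $x$, applying the mean value theorem, and invoking the Green's function estimate $|\nabla_x\nabla_\xi G(x,y)|\le C/|x-y|^{3}$ from Appendix A, I would obtain, for every $x$ with $|x-\eta|\ge 2\epsilon$,
\begin{equation*}
|\nabla\phi_1(x,\eta)|\le \frac{C\epsilon}{|x-\eta|^{3}}\int_{\partial B}|a\,\partial_\nu\phi|\,ds_\xi.
\end{equation*}

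The key remaining step is to prove $\int_{\partial B}|a\,\partial_\nu\phi|\,ds\le C\epsilon^{2}$. Decomposing $\phi=\bar\phi+\phi_1$, the $\bar\phi$ contribution is $O(\epsilon^{2})$ from the $C^{1,\alpha}$ smoothness of $\bar\phi$ and $|\partial B|=4\pi\epsilon^{2}$. For the $\phi_1$ part I would rescale: the function $\tilde\phi_1(y):=\phi_1(\eta+\epsilon y)$ satisfies $\mathrm{div}(\tilde a\,\nabla\tilde\phi_1)=0$ with $\tilde a(y)=a(\eta+\epsilon y)$ uniformly elliptic and uniformly $C^{1,\alpha}$ on unit scale; on $\partial B(0,1)$ the boundary data $C_1-\bar\phi(\eta+\epsilon\,\cdot\,)$ has $C^{1,\alpha}$ norm of order $\epsilon$ by \eqref{eq:48} together with the smoothness of $\bar\phi$. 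Boundary Schauder estimates on the fixed-scale annulus $\{1\le|y|\le 2\}$ (intersected with the rescaled domain on which $\tilde\phi_1$ vanishes) then yield $\|\nabla_y\tilde\phi_1\|_{L^\infty(\partial B(0,1))}\le C\epsilon$, hence $\|\nabla_x\phi_1\|_{L^\infty(\partial B)}\le C$ and the desired surface-integral bound. Combined with the pointwise estimate this gives $|\nabla\phi_1(x,\eta)|\le C\epsilon^{3}/|x-\eta|^{3}$; squaring and integrating over $B(z,\epsilon)\cap\Omega$, using $|x-\eta|\ge|z-\eta|/2$ on this ball, produces $\|\nabla\phi_1\|_{L^2(B(z,\epsilon)\cap\Omega)}^{2}\le C\epsilon^{9}/|z-\eta|^{6}$, which is \eqref{eq:50}.

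The main technical obstacle is the uniform $C^{1}$ bound on $\tilde\phi_1$ at $\partial B(0,1)$, particularly when $d(\eta,\partial\Omega)<2\epsilon$ and the rescaled annulus $\{1<|y|<2\}$ meets the rescaled outer boundary $(\partial\Omega-\eta)/\epsilon$; one must then apply Schauder boundary estimates simultaneously with respect to $\partial B(0,1)$ (with $O(\epsilon)$ data) and the rescaled outer boundary (with zero data), leaning on the $C^{2,\alpha}$ regularity of $\partial\Omega$ and the $C^{1,\alpha}$ regularity of $a$ that were imposed from the outset.
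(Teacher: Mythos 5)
Your reduction to the pointwise decay $|\nabla\phi_1(x,\eta)|\le C\epsilon^3/|x-\eta|^3$ for $|x-\eta|\gtrsim\epsilon$ (followed by squaring and integrating over $B(z,\epsilon)\cap\Omega$), and your use of the zero-flux condition to subtract $\nabla_x G(x,\eta)$ and invoke the Hessian bound $\|D^2 G\|\le C|x-\xi|^{-3}$, matches the paper's strategy exactly. The divergence is in how you establish the needed $O(\epsilon^2)$ bound on $\int_{\partial B}|a\,\partial_\nu\phi|\,ds$: you propose a rescaled Schauder estimate yielding a \emph{pointwise} bound $\|\nabla\phi_1\|_{L^\infty(\partial B)}\le C$, whereas the paper never touches the surface integral of $\partial_\nu\phi_1$ at all --- it integrates by parts with a cutoff $\chi_\epsilon$ to land on a volume integral $\int_{\Omega\setminus B}\chi_\epsilon D^2_{x\xi}G\cdot a\nabla\phi_1\,d\xi$ plus a lower-order term, and then feeds in only the global $L^2$ bound $\|\nabla\phi_1\|_2\le C\epsilon^{3/2}$ from \eqref{eq:47}.

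This difference is not cosmetic: your route has a genuine gap in the near-boundary regime you yourself flag. When $d(\eta,\partial\Omega)$ is close to $\epsilon$, the rescaled domain is a unit ball nearly tangent to the rescaled outer boundary, and Schauder constants on a region pinched between two boundaries a distance $\delta=d(\eta,\partial\Omega)/\epsilon-1\to 0$ apart are \emph{not} uniform in $\delta$; indeed one should expect $|\nabla\phi_1|\sim \epsilon/(\delta\epsilon)=1/\delta$ in the gap, so the claimed uniform $\|\nabla\phi_1\|_{L^\infty(\partial B)}\le C$ fails. (This same mechanism is what produces the $\ln(1/\delta)$ factors in the capacity bound \eqref{eq:23}.) The weaker $L^1$ bound $\int_{\partial B}|a\,\partial_\nu\phi_1|\,ds\le C\epsilon^2$ could survive because the region of large gradient has small area and the flux condition forces $C_1$ to tune so that the Dirichlet data nearly vanishes at the tangency point, but that cancellation is exactly what would need to be proved, and the proposed ``simultaneous'' Schauder argument does not give it. The paper's integration by parts sidesteps the issue entirely: it replaces the need for any pointwise control of $\nabla\phi_1$ on $\partial B$ by the $L^2$ estimate \eqref{eq:47}, which \emph{is} uniform in $\eta\in\Omega_\epsilon$ even at the tangent limit, and it handles the $d(\eta,\partial\Omega)<2\epsilon$ case separately by exploiting $\nabla_x G(x,\cdot)\equiv 0$ on $\partial\Omega$ in place of the mean-value-theorem subtraction. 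You should adopt this mechanism; the rest of your argument then goes through.
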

\begin{proof}
  We first take the gradient of \eqref{eq:19} with $N=1$ to obtain
  \begin{multline}
\label{eq:51}
     \nabla\phi_1(x,\eta)=\int_{\partial B}\nabla_x
  G(x,\xi)a(\xi)\frac{\partial \psi_1}{\partial\nu}(\xi,\eta)\,ds_\xi = \int_{\partial B}
  \nabla_xG\,a\Big(\frac{\partial\bar{\phi}}{\partial\nu}+\frac{\partial\phi_1}{\partial\nu}\Big)\,ds_\xi 
\end{multline}
for all $x\in\Omega\setminus B$. Consider first the case where
$|x-\eta|>3\epsilon$. By \eqref{eq:18} we have
\begin{displaymath}
   \int_{\partial B}a(\xi)\nabla_xG(x,\xi)\frac{\partial\bar{\phi}}{\partial\nu}(\xi)\,ds_\xi =
   \int_{B}\nabla_x\nabla_\xi G(x,\xi)\cdot a(\xi)\nabla\bar{\phi}(\xi)\,d\xi\,.
\end{displaymath}
In appendix A we show that
\begin{equation}
\label{eq:52}
  \|D^2G\|(x,\xi) \leq \frac{C(\Omega)}{|x-\xi|^3} \,,
\end{equation}
where $D^2G$ denotes the Hessian matrix of $G$.  Consequently, by \eqref{eq:18},
\begin{multline}
\label{eq:53}
\Big|\int_{\partial B}\nabla_xG(x,\xi)a\frac{\partial\bar{\phi}}{\partial\nu}\,ds_\xi\Big|=\Big|\int_{\partial
  B}[\nabla_xG(x,\xi)-\nabla_xG(x,\eta)]a \frac{\partial\bar{\phi}}{\partial\nu}\,ds_\xi\Big| \\\leq
C\frac{\epsilon^3}{|x-\eta|^3} \,.
\end{multline}
 Furthermore, in view of \eqref{eq:27}, \eqref{eq:28}, and \eqref{eq:18}, we have that
\begin{displaymath}
    \int_{\partial B}\nabla_xG(x,\xi)\frac{\partial\phi_1}{\partial\nu}\,ds_\xi = \int_{\partial
      B}[\nabla_xG(x,\xi)-\nabla_xG(x,\eta)]a\frac{\partial\phi_1}{\partial\nu}\,ds_\xi \,.
\end{displaymath}
Let $\chi$ be given by \eqref{eq:25} and set
$\chi_\epsilon=\chi(|x-\eta|/\epsilon)$. Suppose first that
$d(\eta,\partial\Omega)\geq2\epsilon$. Integration by parts then yields
\begin{multline*}
  \int_{\partial B}\nabla_xG(x,\xi)a\frac{\partial\phi_1}{\partial\nu}\,ds_\xi =
 \int_{\Omega\setminus B}\chi_\epsilon D^2_{x\xi}G\cdot a\nabla\phi_1\,d\xi \\ + \int_{\Omega\setminus B}[\nabla_xG(x,\xi)-\nabla_xG(x,\eta)]\nabla\chi_\epsilon\cdot a\nabla\phi_1\,d\xi\,.
\end{multline*}
Consequently, by \eqref{eq:52} and  \eqref{eq:47} we obtain that
\begin{multline}
\label{eq:54}
\Big|\int_{\partial B}\nabla_xG(x,\xi)a\frac{\partial\phi_1}{\partial\nu}\,ds_\xi\Big|
 \leq \|D^2G(x,\cdot)\|_{L^\infty( B(\eta,2\epsilon))}
 \|\nabla\phi_1(\cdot,\eta)\|_{L^1(\Omega\cap B(\eta,2\epsilon)\setminus B(\eta,\epsilon)}  \\ \leq
 C\frac{\epsilon^3}{|x-\eta|^3} \,.
\end{multline}

Next, consider the case $|x-\eta|\geq3\epsilon$, and $d(\eta,\partial\Omega)<2\epsilon$. Here we
write,
\begin{displaymath}
  \int_{\partial B}\nabla_xG(x,\xi)a\frac{\partial\phi_1}{\partial\nu}\,ds_\xi =
 \int_{\Omega\setminus B}\chi_\epsilon D^2_{x\xi}G\cdot a\nabla\phi_1\,d\xi+ \int_{\Omega\setminus B}\nabla_xG(x,\xi)\nabla\chi_\epsilon\cdot a\nabla\phi_1\,d\xi\,.
\end{displaymath}
The first integral on the right-hand-side can be estimated in
precisely the same manner as in \eqref{eq:54} to obtain that
\begin{displaymath}
  \Big|\int_{\Omega\setminus B}\chi_\epsilon D^2_{x\xi}G\cdot a\nabla\phi_1\,d\xi\Big| \leq C\frac{\epsilon^3}{|x-\eta|^3} \,.
\end{displaymath}
To estimate the second integral we first observe that since
$\nabla_xG(x,\cdot)\equiv0$ on $\partial\Omega$, and hence
\begin{equation}
\label{eq:55}
  \|\nabla_xG(x,\cdot)\cap\Omega\|_{L^\infty(B(\eta,2\epsilon)\cap\Omega}\leq
  4\epsilon\|D^2G(x,\cdot)\|_{L^\infty(B(\eta,2\epsilon)\cap\Omega} \,.
\end{equation}
Hence, as in \eqref{eq:53} we have
\begin{multline*}
  \Big|\int_{\Omega\setminus B}\nabla_xG(x,\xi)\nabla\chi_\epsilon\cdot a\nabla\phi_1\,d\xi\Big| \leq \\ C\|D^2G(x,\cdot)\|_{L^\infty( B(\eta,2\epsilon))}
 \|\nabla\phi_1(\cdot,\eta)\|_{L^1(\Omega\cap B(\eta,2\epsilon)\setminus B(\eta,\epsilon)} \leq
 C\frac{\epsilon^3}{|x-\eta|^3} \,.
\end{multline*}
The above, in conjunction with \eqref{eq:53} and \eqref{eq:54}, leads to
\begin{equation}
   \label{eq:56}
|\nabla\phi_1(x,\eta)|\leq  C\frac{\epsilon^3}{|x-\eta|^3} \,,
\end{equation}
from which (\ref{eq:50}) easily follows for the case $|z-\eta|\geq4\epsilon$. When $|z-\eta|\leq4\epsilon$ it
immediately follows from \eqref{eq:47}. 
\end{proof}

\section{Two inclusions}
\label{sec:twoinclusions}
We now proceed to consider a two-particle problem. Let
$\psi_2(\cdot,\eta_1,\eta_2):\Omega\to\R$ denote, for every
$(\eta_1,\eta_2)\in\Omega_\epsilon\times\Omega_\epsilon$, the unique (weak) solution of 
\begin{equation}
\label{eq:57}
  \begin{cases}
    \LL \psi_2(x,\eta_1,\eta_2)) =0 &  x\in\Omega\setminus(B_1\cup B_2) \\
    \psi_2(x,\eta_1,\eta_2) = f & x\in\partial\Omega \,, \\
    \psi_2=C_i & \text{in } B_i \; i=1,2 \,.\,, \\
    \int_{\partial B_i} a\frac{\partial\psi_2}{\partial\nu} \,ds =0& i=1,2 \,, 
\end{cases}
\end{equation}
where $B_i=B(\eta_i,\epsilon)$. 
We then set
\begin{equation}
\label{eq:58}
  v_2(\cdot,\eta_1,\eta_2) = \psi_2(\cdot,\eta_1,\eta_2) -\bar{\phi}- \phi_1(\cdot,\eta_1) -
  \phi_1(\cdot,\eta_2) \,.
\end{equation}
For convenience of notation we set
\begin{displaymath}
  \|\cdot\|_{2,o}=\|\cdot\|_{L^2(\Omega\setminus\Union_{n=1}^2B_i)}\,.
\end{displaymath}

We begin with the following global estimate
\begin{lemma}
  Let $\delta_i$ be given by \eqref{eq:22} and $v_2$ be defined by
  \eqref{eq:58}. Then
  \begin{equation}
    \label{eq:59}
\|\nabla v_2(\cdot,\eta_1,\eta_2)\|_2 \leq C \frac{\epsilon^{9/2}}{|\eta_2-\eta_1|^3}[1+(|\ln \delta_1|+|\ln
\delta_2|){\mathbf 1}_{B(\eta_1,4\epsilon)}(\eta_2)]^{1/2}\,.
  \end{equation}
\end{lemma}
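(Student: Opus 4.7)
My plan is to exploit the variational principle \eqref{eq:16}--\eqref{eq:17} to characterize $v_2$ and then produce a suitable test function. From \eqref{eq:57}, \eqref{eq:58} and \eqref{eq:27} one checks that $v_2$ solves $\LL v_2=0$ in $\Omega\setminus(B_1\cup B_2)$, vanishes on $\partial\Omega$, satisfies $v_2|_{\partial B_i}=\tilde C_i-\phi_1(\cdot,\eta_{3-i})|_{\partial B_i}$ for constants $\tilde C_i$, and has zero net flux across each $\partial B_i$ (the latter because $\LL\phi_1(\cdot,\eta_{3-i})=0$ in $B_i$). Hence $v_2$ minimizes $I_2$ over $X_2\bigl(0,\{-\phi_1(\cdot,\eta_{3-i})|_{\partial B_i}\}\bigr)$. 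Inside each $B_i$, $\nabla v_2=-\nabla\phi_1(\cdot,\eta_{3-i})$, so Lemma~\ref{lem:2.2} immediately gives $\|\nabla v_2\|_{L^2(B_i)}\le C\epsilon^{9/2}/|\eta_1-\eta_2|^3$. It remains to build an admissible $w$ whose energy outside the balls is of the required size, in two geometric cases.

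For $|\eta_1-\eta_2|\ge 4\epsilon$ I would set $\chi_i(x)=\chi(|x-\eta_i|/\epsilon)$ and $c_j=\phi_1(\eta_i,\eta_j)$ for $i\neq j$, and test with
\[
w(x)=-\chi_1(x)\bigl[\phi_1(x,\eta_2)-c_2\bigr]-\chi_2(x)\bigl[\phi_1(x,\eta_1)-c_1\bigr].
\]
The supports are then disjoint and $w|_{\partial B_i}$ agrees with the prescribed data up to the constant $c_{3-i}$, so $w$ lies in the right class. The gradient splits as $-\chi_i\nabla\phi_1(\cdot,\eta_{3-i})-(\phi_1(\cdot,\eta_{3-i})-c_{3-i})\nabla\chi_i$; the first piece is controlled in $L^2$ by Lemma~\ref{lem:2.2}, and for the second I would use the pointwise bound \eqref{eq:56} together with the centering to get $|\phi_1(\cdot,\eta_{3-i})-c_{3-i}|\le C\epsilon^4/|\eta_1-\eta_2|^3$ on $B(\eta_i,2\epsilon)$, producing $\|(\phi_1-c)\nabla\chi_i\|_2\le C\epsilon^{9/2}/|\eta_1-\eta_2|^3$. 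When $\delta_i<1$ the cutoff overlaps $\partial\Omega$ and one must absorb the small boundary residue $\chi_ic_{3-i}$ by a further cutoff that vanishes on $\partial\Omega$; since $|c_{3-i}|\le C\epsilon^3/|\eta_1-\eta_2|^2$ (by \eqref{eq:56} integrated from $\partial\Omega$), this correction is of the same order and is therefore harmless.

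The genuinely delicate regime is $|\eta_1-\eta_2|<4\epsilon$, where the cutoffs collide. Here I would split each boundary value as $-\phi_1(\cdot,\eta_{3-i})|_{\partial B_i}=\overline c_i+g_i^{\mathrm{var}}$ with $\overline c_i$ the average and $g_i^{\mathrm{var}}$ the zero-mean part; the non-overlap condition \eqref{eq:3} combined with \eqref{eq:56} forces $|\nabla\phi_1|\le C$ on $\partial B_i$, giving $\|g_i^{\mathrm{var}}\|_\infty\le C\epsilon$, while $|\overline c_i|\le C\epsilon$ by \eqref{eq:48}. I would extend each $g_i^{\mathrm{var}}$ by a thin cutoff around $\partial B_i$ (of thickness $\epsilon/2$), contributing at most $C\epsilon^3$ to the squared $L^2$ norm of the gradient. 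To carry the constants $\overline c_i$, I would use a two-particle capacitor function $u_{12}\in H^1_0(\Omega)$ equal to $\overline c_i$ on $B_i$, assembled as a linear combination of the single-ball capacitors from Lemma~\ref{lem:standardbound} and \eqref{eq:24}, whose energy is $\le C\max_i|\overline c_i|^2\cdot\epsilon(1+|\ln\delta_1|+|\ln\delta_2|)\le C\epsilon^3(1+|\ln\delta_1|+|\ln\delta_2|)$. Summing everything yields $\|\nabla w\|_2^2\le C\epsilon^3(1+|\ln\delta_1|+|\ln\delta_2|)$, which matches \eqref{eq:59} since here $\epsilon^{9/2}/|\eta_1-\eta_2|^3\ge C\epsilon^{3/2}$.

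The main obstacle is the close case: one test function has to reconcile non-constant traces on two adjacent $\partial B_i$ with possible proximity to $\partial\Omega$. The logarithmic factors are intrinsic to the capacity of a near-boundary cluster, and the difficulty is in engineering a test function that saturates only this capacity scaling while still carrying the correct fluctuating boundary data on each $\partial B_i$.
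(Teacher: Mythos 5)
Your reduction to the variational principle, the interior bound on $B_i$ via Lemma~\ref{lem:2.2}, and the far-apart test function $w=-\chi_1(\phi_1(\cdot,\eta_2)-c_2)-\chi_2(\phi_1(\cdot,\eta_1)-c_1)$ coincide with the paper's argument (the paper's handling of the far, near-boundary subcase is slightly cleaner: it takes $K_i$ to be $\phi_1(z_i,\eta_i)$ at the point $z_i$ minimizing $|\phi_1(\cdot,\eta_i)|$ on $B(\eta_{3-i},2\epsilon)$, which is automatically $0$ when that ball touches $\partial\Omega$, so no extra boundary cutoff is needed).

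In the close regime $|\eta_1-\eta_2|<4\epsilon$ your construction has a genuine gap. You propose to impose the two averages $\overline c_1,\overline c_2$ by ``a linear combination of the single-ball capacitors,'' but the one-ball capacitor of $B_i$ is not zero on $B_{3-i}$, so no linear combination of the two gives a function equal to $\overline c_i$ on $B_i$; and even if one builds such a function directly, when $\overline c_1\neq \overline c_2$ its Dirichlet energy is governed by the potential difference across the gap $|\eta_1-\eta_2|-2\epsilon$, which can be arbitrarily small (only $|\eta_1-\eta_2|\ge2\epsilon$ is guaranteed by \eqref{eq:3}), so the stated bound $C\max_i|\overline c_i|^2\,\epsilon(1+|\ln\delta_1|+|\ln\delta_2|)$ is not available. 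The same proximity also makes your ``thin cutoff of thickness $\epsilon/2$ around $\partial B_i$'' collide with the other ball. The fix is to use a \emph{single} cluster constant $\tilde K$ (e.g.\ the average over $U_2$) and fold the $O(\epsilon)$ discrepancies $\overline c_i-\tilde K$ into the fluctuating part; this is exactly what the paper's test function $w=-\zeta_2\kappa_2\big(\bar\phi+\phi_1(\cdot,\eta_1)+\phi_1(\cdot,\eta_2)-\tilde K_2\big)$ does, because $\bar\phi+\phi_1(\cdot,\eta_i)$ is constant on $B_i$ by \eqref{eq:49}, so the product automatically restricts to $\text{const}-\phi_1(\cdot,\eta_{3-i})$ on each $B_i$, and $\kappa_2\in H^1_0$ carries the $\epsilon(1+|\ln\delta_1|+|\ln\delta_2|)$ capacity factor without ever needing two distinct constants. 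A further, smaller imprecision: you invoke \eqref{eq:56} to bound $\nabla\phi_1$ on $\partial B_i$ in the close case, but \eqref{eq:56} was derived for $|x-\eta|\ge3\epsilon$ and $\partial B_i$ can be as close as $\epsilon$ to $\eta_{3-i}$; the required bound $\|g_i^{\mathrm{var}}\|_\infty\le C\epsilon$ is better obtained directly from \eqref{eq:48}.
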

\begin{proof}
  It can be easily verified that
\begin{subequations}
  \label{eq:60}
\begin{empheq}[left={\empheqlbrace}]{alignat=2}    
 &   \LL v_2(\cdot,\eta_1,\eta_2)) =0 & \quad & \text{in }\Omega\,, \\
  &  v_2(\cdot,\eta_1,\eta_2) = 0 &\quad & \text{on }\partial\Omega \,, \\
  &  v_2=C_i-\phi_1(\cdot,\eta_{3-i}) & \quad &\text{in } B_i \; i=1,2 \,, \\
   & \int_{\partial B_i} a\frac{\partial v_2}{\partial\nu} \,ds =0&\quad & i=1,2 \,.    
  \end{empheq}
\end{subequations}
We note further that $v_2$ is the minimizer in
 of $I_2$ in $X_2(0,-\{\phi_1(\cdot,\eta_{3-i})\}_{i=1}^2)$, respectively given
by \eqref{eq:15} and \eqref{eq:16}.

Consider first the case where $|\eta_2-\eta_1|>4\epsilon$ and
$\delta_1=\delta_2=1$.  Let
$\chi_\epsilon^i(x)=\chi(|x-\eta_i|/\epsilon)$ ($i=1,2$), where $\chi$ is defined by
\eqref{eq:25}. Then, set
\begin{equation}
\label{eq:61}
  w=-\chi_1\big(\phi_1(\cdot,\eta_2)-\phi_1(\eta_1,\eta_2)\big)  - \chi_2\big(\phi_1(\cdot,\eta_1)-\phi_1(\eta_2,\eta_1)\big) \,.
\end{equation}
Clearly, $w\in X_2(0,-\{\phi_1(\cdot,\eta_{3-i})\}_{i=1}^2)$. Consequently,
\begin{multline}
  \label{eq:62}
\|a^{1/2}\nabla v_2\|_{2,o}\leq \|a^{1/2}\nabla w\|_{2,o} \leq C\sum_{i=1}^2\|\chi_i\nabla\phi_1(\cdot,\eta_{3-i}))\|_{2,o} \\+
\big\|\big(\phi_1(\cdot,\eta_{3-i})-\phi_1(\eta_i,\eta_{3-i})\big)\nabla\chi_i\big\|_{2,o} \,.
\end{multline}
By (\ref{eq:50}) we have that
\begin{displaymath}
  \|\chi_i\nabla\phi_1(\cdot,\eta_{3-i}))\|_2  \leq C \frac{\epsilon^{9/2}}{|\eta_2-\eta_1|^3}\,.
\end{displaymath}
Furthermore, by \eqref{eq:56} we have that
\begin{multline*}
  \Sigma_{i=1}^2\big\|\big(\phi_1(\cdot,\eta_{3-i})-\phi_1(\eta_i,\eta_{3-i})\big)\nabla\chi_i\big\|_2\leq  \\  C\epsilon^{3/2}\big(( \|\nabla\phi_1(\cdot,\eta_2)\|_{L^\infty(B(\eta_1,2\epsilon))}  
+  \|\nabla\phi_1(\cdot,\eta_1)\|_{L^\infty(B(\eta_2,2\epsilon))}\big) \leq C \frac{\epsilon^{9/2}}{|\eta_2-\eta_1|^3}\,.
\end{multline*}
Hence, by \eqref{eq:62} we obtain that
\begin{equation}
  \label{eq:63}
\|\nabla v_2(\cdot,\eta_1,\eta_2)\|_{2,o}\leq C \frac{\epsilon^{9/2}}{|\eta_2-\eta_1|^3}\,.
\end{equation}

Next we consider the case $|\eta_2-\eta_1|<4\epsilon$ and $\delta_1=\delta_2=1$. Let 
\begin{equation}
\label{eq:64}
  U_m=\big(B(\eta_1,m\epsilon)\cup B(\eta_2,m\epsilon) \big)\cap\Omega
\end{equation}
and define the cutoff function $\zeta_2\in
C^\infty_0(\Omega,[0,1])$
\begin{equation}
\label{eq:65}
  \zeta_2(x)=
  \begin{cases}
    1 & x\in U_1 \\
    0 & x\in\Omega\setminus U_2 \,.
  \end{cases} \quad |\nabla\zeta|\leq \frac{C}{\epsilon}\,.
\end{equation}
Then we set
\begin{equation}
\label{eq:66}
  w= -\zeta_2[(\bar{\phi}+\phi_1(\cdot,\eta_1) +\phi_1(\cdot,\eta_2))-
  \bar{\phi}(\eta_1)] \,.
\end{equation}

It can be easily verified from \eqref{eq:49} that
$w\in X_2(0,-\{\phi_1(\cdot,\eta_{3-i})\}_{i=1}^2)$. Consequently,
\begin{multline*}
  \|\nabla v_2\|_{2,o}\leq  C\big(\|\zeta_2\nabla\bar{\phi}\|_2 +
  \|(\bar{\phi}-\bar{\phi}(\eta_1))\nabla\zeta\|_2  +
  \|\nabla\phi_1(\cdot,\eta_1)\|_2+ \|\nabla\phi_1(\cdot,\eta_2)\|_2 + \\
  \|[\phi_1(\cdot,\eta_1)-(\tilde{\phi}_1(\cdot,\eta_1))_{B(\bar{\eta},4\epsilon)}]\nabla\zeta\|_2+\|[\phi_1(\cdot,\eta_2)-(\tilde{\phi}_1(\cdot,\eta_2))_{B(\bar{\eta},4\epsilon)}]\nabla\zeta\|_2 \big)\,. 
\end{multline*}
Since $\bar{\phi}\in C^{2,\alpha}(\bar{\Omega})$ we have
\begin{displaymath}
  \|\zeta\nabla\bar{\phi}\|_2 +
  \|(\bar{\phi}-\bar{\phi}(\eta_1))\nabla\zeta\|_2   \leq C\epsilon^{3/2} \,.
\end{displaymath}
By \eqref{eq:47} we have that
\begin{displaymath}
  \|\nabla\phi_1(\cdot,\eta_1)\|_2+ \|\nabla\phi_1(\cdot,\eta_2)\|_2 \leq  C\epsilon^{3/2} \,.
\end{displaymath}
Finally, by \eqref{eq:48}, we obtain that
\begin{displaymath}
 \|\phi_1(\cdot,\eta_1)\nabla\zeta\|_2+\|\phi_1(\cdot,\eta_2)\nabla\zeta\|_2 \leq  C\epsilon^{3/2}\,.
\end{displaymath}
Hence
\begin{equation}
  \label{eq:67}
 \|\nabla v_2(\cdot,\eta_1,\eta_2)\|_{2,o}\leq C\epsilon^{3/2}\,.
\end{equation}

Consider next the case $\min(\delta_1,\delta_2)<1$ and
$|\eta_2-\eta_1|\geq4\epsilon$. Let $z_i\in B(\eta_{3-i},2\epsilon)$ satisfy
\begin{displaymath}
  |\phi_1(z_i,\eta_i)| = \min_{x\in B(\eta_{3-i},2\epsilon)}|\phi_1(x,\eta_i)|\,,
\end{displaymath}
and $K_i=\phi_1(z_i,\eta_i)$ for $i=1,2$. Then, we set 
\begin{displaymath}
  w=-\chi_1(\phi_1(\cdot,\eta_2)-K_2) -\chi_2(\phi_1(\cdot,\eta_1)-K_1) \,.
\end{displaymath}
Since $\phi_1(\cdot,\eta)\in H^1_0(\Omega)$ we have $w\in
X_2(0,-\{\phi_1(\cdot,\eta_{3-i})\}_{i=1}^2)$. As in \eqref{eq:62} we then
obtain that
\begin{multline}
\label{eq:68}
  \|\nabla v_2(\cdot,\eta_1,\eta_2)\|_{2,o}\leq C\sum_{i=1}^2\big[\|\chi_{3-i}\nabla\phi_1(\cdot,\eta_i))\|_2 +\\
\big\|(\phi_1(\cdot,\eta_i)-K_i)\nabla\chi_{3-i}\big\|_2\big]\leq C \frac{\epsilon^{9/2}}{|\eta_2-\eta_1|^3}\,. 
\end{multline}

Finally, we consider the case $\min(\delta_1,\delta_2)<1$
and $|\eta_2-\eta_1|<4\epsilon$.  Let $C_2(\eta_1,\eta_2)$ be given by \eqref{eq:20}
and let $\kappa_2(\cdot,\eta_1,\eta_2)\in\W(\eta_1,\eta_2)$ denote its associated
minimizer. Let 
$\zeta_2\in C^\infty(\Omega,[0,1])$ satisfy
\begin{equation}
\label{eq:69}
  \zeta_2(x,\eta_1,\eta_2)=
  \begin{cases}
    1 & x\in U_1 \\
    0 & x\in\Omega\setminus U_2
  \end{cases}
\quad |\nabla\zeta_2|\leq \frac{C}{\epsilon}
\end{equation}
Set then
\begin{displaymath}
  w= -\zeta_2\kappa_2(\bar{\phi}+\phi_1(\cdot,\eta_1) +\phi_1(\cdot,\eta_2) -\tilde{K}_2)\,,
\end{displaymath}
where
\begin{displaymath}
  \tilde{K}_2=(\bar{\phi})_{U_2}\,,
\end{displaymath} 
(recall that $(\cdot)_U$ denotes the average over $U$).  It can be easily
verified that $w\in X_2(0,-\{\phi_1(\cdot,\eta_{3-i})\}_{i=1}^2)$. Furthermore, by
\eqref{eq:47}, \eqref{eq:48}, and \eqref{eq:23} we have
\begin{multline*}
  \|\nabla w\|_2\leq \|\bar{\phi}+\phi_1(\cdot,\eta_1) +\phi_1(\cdot,\eta_2)
  -\tilde{K}_2\|_{L^\infty(U_2)}(\|\nabla \kappa_2\|_2
  +\|\nabla\zeta_2\|_2) + \|\nabla\bar{\phi}\|_{L^2(U_2\cap\Omega)} \\+
  \|\nabla\phi_1(\cdot,\eta_1)\|_{L^2(U_2)}  + \|\nabla\phi_1(\cdot,\eta_2)\|_{L^2(U_2)} \leq
  C\epsilon^{3/2}[1+|\ln \delta_1|+|\ln \delta_2|]^{1/2}\,.
\end{multline*}
By the above, \eqref{eq:63},  \eqref{eq:67}, and \eqref{eq:68} we have, thus, established that
\begin{displaymath}
   \|\nabla v_2(\cdot,\eta_1,\eta_2)\|_{2,o}\leq C \frac{\epsilon^{9/2}}{|\eta_2-\eta_1|^3}\big[1+(|\ln \delta_1|+|\ln
\delta_2|){\mathbf 1}_{B(\eta_1,4\epsilon)}(\eta_2)\big]^{1/2}\,.
\end{displaymath}
To complete the proof, we use (\ref{eq:60}c) and \eqref{eq:50} to obtain that
\begin{displaymath}
  \|\nabla v_2(\cdot,\eta_1,\eta_2)\|_{L^2(U_1)} \leq C \frac{\epsilon^{9/2}}{|\eta_2-\eta_1|^3}\,.
\end{displaymath}
The lemma is proved.
\end{proof}

We next establish the following $L^\infty$ estimate, analogously to \eqref{eq:48},
\begin{lemma}
   There exists $C(\Omega,f,\lambda,\Lambda)>0$ such that
\begin{equation}
\label{eq:70}
  \|v_2(\cdot,\eta_1,\eta_2)\|_\infty \leq C\frac{\epsilon^4}{|\eta_1-\eta_2|^3}[1+(|\ln \delta_1|+|\ln \delta_2|){\mathbf 1}_{B(\eta_1,4\epsilon)}(\eta_2)]^{1/2} \,.
\end{equation}
\end{lemma}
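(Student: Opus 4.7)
The strategy mirrors the proof of \eqref{eq:48}: I would bound the constants $C_i$ via a capacity argument, control the oscillation of $\phi_1(\cdot,\eta_{3-i})$ across $\partial B_i$ separately, and finish by the maximum principle.

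First I would note that $v_2$ is $a$-harmonic in $\Omega\setminus(B_1\cup B_2)$ with zero Dirichlet data on $\partial\Omega$, and is also $a$-harmonic inside each $B_i$, since $B_i\subset\Omega\setminus B(\eta_{3-i},\epsilon)$ by the no-overlap condition \eqref{eq:3} and $\LL\phi_1(\cdot,\eta_{3-i})=0$ there. The maximum principle applied separately on $\Omega\setminus(B_1\cup B_2)$ and on each $B_i$ therefore reduces matters to an estimate of $\|v_2\|_{L^\infty(\partial B_i)}$ for $i=1,2$.

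Fix $i$ and set $\bar K_i := (\phi_1(\cdot,\eta_{3-i}))_{\partial B_i}$. Extending $v_2$ by zero across $\partial\Omega$ produces a function in $H^1(\R^3\setminus B_i)$ with boundary value $C_i-\phi_1(\cdot,\eta_{3-i})$ on $\partial B_i$ and decay at infinity. The exterior harmonic extension on a sphere of radius $\epsilon$ has Dirichlet energy at least $4\pi\epsilon(C_i-\bar K_i)^2$ (the $\ell=0$ mode in the spherical-harmonic expansion; higher multipoles add positively, exactly as in the proof of \eqref{eq:48}). Combining with \eqref{eq:2} and the gradient estimate \eqref{eq:59} yields
$$|C_i-\bar K_i|\;\le\;C\,\frac{\epsilon^4}{|\eta_1-\eta_2|^3}\bigl[1+(|\ln\delta_1|+|\ln\delta_2|)\mathbf 1_{B(\eta_1,4\epsilon)}(\eta_2)\bigr]^{1/2}.$$

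For the oscillation of $\phi_1(\cdot,\eta_{3-i})$ on $\partial B_i$ I would split into cases. When $|\eta_1-\eta_2|\ge 4\epsilon$, the pointwise bound \eqref{eq:56} gives $|\nabla\phi_1(\cdot,\eta_{3-i})|\le C\epsilon^3/|\eta_1-\eta_2|^3$ on $\partial B_i$, so the oscillation is of order $\epsilon^4/|\eta_1-\eta_2|^3$. When $|\eta_1-\eta_2|<4\epsilon$ the pointwise bound is unavailable, but the crude estimate \eqref{eq:48} gives oscillation at most $2C\epsilon\le C'\epsilon^4/|\eta_1-\eta_2|^3$ since $|\eta_1-\eta_2|\le 4\epsilon$. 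Writing $v_2|_{\partial B_i}=(C_i-\bar K_i)-(\phi_1(\cdot,\eta_{3-i})-\bar K_i)$ and combining with the capacity estimate completes the proof. The main subtlety is ensuring the logarithmic factors enter only through \eqref{eq:59} and not through the oscillation term, which is indeed the case since the oscillation bound is derived either from the pointwise estimate \eqref{eq:56} (valid in the regime where the indicator vanishes) or from the boundary-distance-independent estimate \eqref{eq:48}.
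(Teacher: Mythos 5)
Your proposal is correct and follows essentially the same route as the paper: capacity lower bound on the energy of a zero extension of $v_2$ across $\partial\Omega$ to control $|C_i-\bar K_i|$, combined with \eqref{eq:59}, then the oscillation of $\phi_1(\cdot,\eta_{3-i})$ on $\partial B_i$ via \eqref{eq:56} (far) or \eqref{eq:48} (near), and the maximum principle both outside and inside the balls. The only cosmetic differences are the case cutoff ($4\epsilon$ vs.\ $3\epsilon$) and that the paper explicitly splits the energy into an outer part plus a contribution in $B_{3-i}$ bounded via \eqref{eq:50}, whereas you absorb that contribution directly by using \eqref{eq:59} as a bound on the full $L^2(\Omega)$ norm.
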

\begin{proof}
  Recall that that $v_2$ is the minimizer of $I_2(w)$ given by
  \eqref{eq:15} over all $w\in X_2(0,-\{\phi_1(\cdot,\eta_{3-i})\}_{i=1}^2)$
  given by \eqref{eq:16}. For fixed ${\mathbf K}=(K_1,K_2)\in\R^2$ denote by
  $w_{\mathbf K}$ the minimizer of $I_2(w)$ in
  $\HH_2(K_1,K_2,0,-\{\phi_1(\cdot,\eta_{3-i})\}_{i=1}^2))$, given by (\ref{eq:16}b).  Let, for
  $i=1,2$,
  \begin{displaymath}
    \tilde{w}_{\mathbf K}^i =
   \begin{cases}
      w_{\mathbf K} & \text{in } \Omega\setminus U_1\\
     K_{3-i} - \phi_1(\cdot,\eta_i) & \text{in } B_{3-i} \\
     0 & x\in\R^3\setminus\Omega \,.
   \end{cases}
  \end{displaymath}
Clearly, $ \tilde{w}_{\mathbf K}^i \in H^1(\R^3\setminus B_i)$ and hence
\begin{equation}
\label{eq:71}
   I_2(w_{\mathbf K}) +
   \|a^{1/2}\nabla\phi_1(\cdot,\eta_i)\|_{L^2(B_{3-i})}^2=\|a^{1/2}\nabla\tilde{w}_{\mathbf K}^i
   \|_{L^2(\R^3\setminus B_i)}^2\geq \lambda\inf_{w\in\Vg_i} \|\nabla
   w\|_{L^2(\R^3\setminus B_i)}^2\,,
\end{equation}
where
\begin{displaymath}
  \Vg_i(\mathbf K)=\{ w\in H^1(\R^3\setminus B_i) \,|\, (w-w_{\mathbf K})|_{B_i}=0\,\}
  \,. 
\end{displaymath}

Next, we set  $\bar{C}_i=(\phi_1(\cdot,\eta_{3-i}))_{\partial B_i}$. By
\eqref{eq:50} and \eqref{eq:71} we have
that 
\begin{displaymath}
  I_2(w_{\mathbf K})\geq 4\pi\lambda|K_i-\bar{C}_i|^2\epsilon-  C\frac{\epsilon^9}{|\eta_1-\eta_2|^6} \,.
\end{displaymath}
Let ${\mathbf C}=(C_1,C_2)$. Since $v_2=w_{\mathbf C}$, we obtain from \eqref{eq:59} that
\begin{multline*}
  4\pi\lambda\max_{i\in\{1,2\}}|C_i-\bar{C}_i|^2\epsilon -
  C\frac{\epsilon^9}{|\eta_1-\eta_2|^6} \leq   \|a^{1/2}\nabla v_2(\cdot,\eta_1,\eta_2)\|_{2,o}^2 \leq\\
  C\frac{\epsilon^9}{|\eta_1-\eta_2|^6}[1+(|\ln \delta_1|+|\ln
\delta_2|){\mathbf 1}_{B(\eta_1,4\epsilon)}(\eta_2)]  \,.
\end{multline*}
When $|\eta_1-\eta_2|>3\epsilon$ we have by \eqref{eq:56}, for i=1,2,
\begin{displaymath}
  \|\bar{C}_i-\phi_1(\cdot,\eta_{3-i})\|_{L^\infty(\partial B_i)} \leq  C\frac{\epsilon^4}{|\eta_1-\eta_2|^3}\,,
\end{displaymath}
whereas for $|\eta_1-\eta_2|\leq3\epsilon$ we have by \eqref{eq:48} that
\begin{displaymath}
  \|\bar{C}_i-\phi_1(\cdot,\eta_{3-i})\|_{L^\infty(\partial B_i)} \leq C\epsilon \,.
\end{displaymath}
Consequently,
\begin{multline*}
  \|C_i-\phi_1(\cdot,\eta_{3-i})\|_{L^\infty(\partial B_i)} \leq  \|\bar{C}_i-\phi_1(\cdot,\eta_{3-i})\|_{L^\infty(\partial B_i)} +
  |C_i-\bar{C}_i| \leq\\ C\frac{\epsilon^4}{|\eta_1-\eta_2|^3}\big[1+(|\ln \delta_1|+|\ln
\delta_2|){\mathbf 1}_{B(\eta_1,4\epsilon)}(\eta_2)\big]^{1/2} \,.
\end{multline*}
The lemma now follows from the maximum principle. 
\end{proof}

Finally, we establish a local $L_2$ estimate, as in \eqref{eq:50} for $\nabla v_2$.
\begin{lemma}
  Let $v_2$ be given by \eqref{eq:58}. Then, for all $z\in\Omega$ and
$(\eta_1,\eta_2)\in\Omega_\epsilon\times\Omega_\epsilon$ such that $|\eta_1-\eta_2|\geq2\epsilon$ we have 
  \begin{multline}
\label{eq:72}
    \|\nabla v_2(\cdot,\eta_1,\eta_2)\|_{L^2(B(z,\epsilon)\cap\Omega)} \leq C(\Omega,f,\lambda,\Lambda)
       \frac{\epsilon^{15/2}}{|\eta_1-\eta_2|^3}\times \\\Big[ \frac{1}{|z-\eta_1|^3}
    +  \frac{1}{|z-\eta_2|^3}\Big]\big[1+(|\ln \delta_1|+|\ln
\delta_2|){\mathbf 1}_{B(\eta_1,4\epsilon)}(\eta_2)\big]^{1/2}\,.
     \end{multline}
\end{lemma}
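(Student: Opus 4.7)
The plan is to combine the global $L^2$ estimate \eqref{eq:59} with a sharper pointwise bound on $|\nabla v_2(x)|$ valid when $x$ is separated from both centers, splitting the argument on whether $\min_i|z-\eta_i|\leq 4\epsilon$. In the close regime one has $\|\nabla v_2\|_{L^2(B(z,\epsilon)\cap\Omega)}\leq\|\nabla v_2\|_2$, and \eqref{eq:59} already dominates the desired bound \eqref{eq:72}, since $|z-\eta_j|\leq 4\epsilon$ for some $j\in\{1,2\}$ forces $\epsilon^{-3}\leq C\bigl[|z-\eta_1|^{-3}+|z-\eta_2|^{-3}\bigr]$ and hence $C\epsilon^{9/2}\leq C\epsilon^{15/2}\bigl[|z-\eta_1|^{-3}+|z-\eta_2|^{-3}\bigr]$.

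In the main regime $|z-\eta_i|>4\epsilon$ for $i=1,2$, every $x\in B(z,\epsilon)\cap\Omega$ satisfies $|x-\eta_i|\geq 3\epsilon$, so the target reduces to the pointwise estimate
\[
|\nabla v_2(x,\eta_1,\eta_2)|\leq C\,\frac{\epsilon^6\,L}{|\eta_1-\eta_2|^3}\,\Bigl[\frac{1}{|x-\eta_1|^3}+\frac{1}{|x-\eta_2|^3}\Bigr],
\]
where $L=[1+(|\ln\delta_1|+|\ln\delta_2|)\mathbf 1_{B(\eta_1,4\epsilon)}(\eta_2)]^{1/2}$. Squaring, using $|x-\eta_i|\geq |z-\eta_i|/2$ on $B(z,\epsilon)$ and $|B(z,\epsilon)|\leq C\epsilon^3$, and integrating then yields \eqref{eq:72}.

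To obtain the pointwise bound I start from the Green's-function representation derived, as in Section~\ref{sec:integralrep}, by applying \eqref{eq:19}-type identities to $\psi_2$ and to each $\psi_1(\cdot,\eta_i)$ and subtracting:
\[
v_2(x)=\sum_{i=1}^2\int_{\partial B_i^{out}} G(x,\xi)\,a(\xi)\,\bigl[\partial_\nu v_2+\partial_\nu\phi_1(\cdot,\eta_{3-i})\bigr]\,ds_\xi.
\]
By (\ref{eq:60}d) and the $\LL$-harmonicity of $\phi_1(\cdot,\eta_{3-i})$ in $B_i$ (which holds because $|\eta_1-\eta_2|\geq 2\epsilon$), the bracket integrates to zero over $\partial B_i$, so after differentiating in $x$ I may replace $\nabla_x G(x,\xi)$ by $\nabla_x G(x,\xi)-\nabla_x G(x,\eta_i)$ inside the integral. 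Next, I integrate each resulting boundary integral by parts using the cutoff $\chi_\epsilon^i=\chi(|\cdot-\eta_i|/\epsilon)$ in the annular region $A_i=(B(\eta_i,2\epsilon)\setminus B_i)\cap\Omega$; since $v_2+\phi_1(\cdot,\eta_{3-i})$ is $\LL$-harmonic there, this converts each boundary integral into a volume integral with integrand of the form $\chi_\epsilon^i\,D^2_{x\xi}G\cdot a\,\nabla(v_2+\phi_1)$ plus a gradient-of-cutoff term, while the outer sphere $\partial B(\eta_i,2\epsilon)$ is killed by $\chi_\epsilon^i$ and the contribution along $\partial\Omega$ vanishes since $v_2,\phi_1,G(x,\cdot)$ all vanish there. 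The Hessian bound \eqref{eq:52}, together with the lower bound $|x-\xi|\geq|x-\eta_i|/3$ for $\xi\in A_i$, then yields
\[
|I_i(x)|\leq\frac{C}{|x-\eta_i|^3}\cdot\epsilon^{3/2}\cdot\bigl(\|\nabla v_2\|_{L^2(A_i)}+\|\nabla\phi_1(\cdot,\eta_{3-i})\|_{L^2(A_i)}\bigr).
\]
The first factor inside the parentheses is controlled by \eqref{eq:59}; for the second, I apply \eqref{eq:50} when $|\eta_1-\eta_2|\geq 4\epsilon$ (covering $A_i$ by a bounded number of $\epsilon$-balls at distance $\gtrsim|\eta_1-\eta_2|$ from $\eta_{3-i}$), and fall back on \eqref{eq:47} combined with $\epsilon^{3/2}\leq C\epsilon^{9/2}|\eta_1-\eta_2|^{-3}$ when $2\epsilon\leq|\eta_1-\eta_2|<4\epsilon$; summing $i=1,2$ gives the claimed pointwise estimate.

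The principal technical obstacle is the regime $2\epsilon\leq|\eta_1-\eta_2|<4\epsilon$, where $A_i$ can overlap $B_{3-i}$ and the local estimate \eqref{eq:50} is unavailable; the resolution is to exploit the comparability of $|\eta_1-\eta_2|^3$ to $\epsilon^3$ in that regime, so that no decay is lost when switching to the global bound \eqref{eq:47}. A secondary delicate point is the boundary contribution on $\partial\Omega$ when $\delta_i<1$, which is disposed of by the simultaneous vanishing of $v_2$, $\phi_1$, and $G(x,\cdot)$ there.
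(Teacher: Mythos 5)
Your overall strategy --- prove a pointwise bound of the form $|\nabla v_2(x)|\lesssim \epsilon^6|\eta_1-\eta_2|^{-3}\bigl[|x-\eta_1|^{-3}+|x-\eta_2|^{-3}\bigr]L$ in the regime where $x$ is separated from both balls, then integrate over $B(z,\epsilon)$, and fall back on the global bound \eqref{eq:59} when $z$ is close to a center --- is exactly the structure of the paper's proof. However, there is a genuine gap in the way you derive the pointwise bound. You integrate $v_2+\phi_1(\cdot,\eta_{3-i})$ by parts \emph{outward} into the annulus $A_i=(B(\eta_i,2\epsilon)\setminus B_i)\cap\Omega$, asserting that this function is $\LL$-harmonic there. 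That assertion fails whenever $2\epsilon\le|\eta_1-\eta_2|<3\epsilon$: in that range $A_i\cap B_{3-i}\neq\emptyset$, and $v_2$ (equivalently $\psi_2-\bar\phi-\phi_1(\cdot,\eta_i)$) has a distributional jump of its conormal derivative across $\partial B_{3-i}$. The integration by parts therefore produces an uncontrolled surface term on $\partial B_{3-i}\cap A_i$ that your estimate does not see. Note that falling back on \eqref{eq:59} directly does not help in this sub-regime either, since you still need the factor $\epsilon^3\bigl[|z-\eta_1|^{-3}+|z-\eta_2|^{-3}\bigr]$ and both $|z-\eta_i|>4\epsilon$.

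You do flag an ``obstacle'' for $2\epsilon\le|\eta_1-\eta_2|<4\epsilon$, but the fix you offer --- replace the local estimate \eqref{eq:50} by the global one \eqref{eq:47} when bounding $\|\nabla\phi_1(\cdot,\eta_{3-i})\|_{L^2(A_i)}$ --- addresses a secondary matter and leaves the integration-by-parts problem untouched. The paper sidesteps it by treating the two contributions differently: the $v_2$ part is pushed outward using the cutoff $\zeta_2$ of \eqref{eq:69}, which is supported on $U_2$ and integrated over $\Omega\setminus U_1$, a region where $\LL v_2=0$ holds weakly for \emph{all} $|\eta_1-\eta_2|\ge2\epsilon$; the $\phi_1(\cdot,\eta_{3-n})$ part is instead integrated \emph{inward} into the ball $B_n$, which is legitimate precisely because $|\eta_1-\eta_2|\ge2\epsilon$ ensures $B_n\cap B_{3-n}=\emptyset$ and hence $\LL\phi_1(\cdot,\eta_{3-n})=0$ in $B_n$, and is then controlled via \eqref{eq:50} and \eqref{eq:52}. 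To repair your argument you should adopt this split --- or at minimum confine your annular integration by parts to $|\eta_1-\eta_2|\ge3\epsilon$ and give a separate argument for $2\epsilon\le|\eta_1-\eta_2|<3\epsilon$.
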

\begin{proof}
Since by \eqref{eq:19} for $N=2$ we have
  \begin{displaymath}
    \psi_2(x,\eta_1,\eta_2)=\bar{\phi}(x) + \sum_{n=1}^2\int_{\partial B_n} 
G(x,\xi)a(\xi)\frac{\partial\psi_2}{\partial\nu}(\xi,\eta_1,\eta_2) \,ds_\xi \,,
  \end{displaymath}
it can be easily verified from \eqref{eq:58}, \eqref{eq:28}, and
\eqref{eq:51} that for any $x\in\Omega\setminus U_1$
  \begin{displaymath}
    \nabla v_2(x,\eta_1,\eta_2)=\sum_{n=1}^2\int_{\partial B_n} 
\nabla_xG(x,\xi)a(\xi)\Big[\frac{\partial v_2}{\partial\nu}(\xi,\eta_1,\eta_2)-
\frac{\partial\phi_1}{\partial\nu}(\xi,\eta_{3-n})]\,ds_\xi\,.  
  \end{displaymath}
  Consider first the case $d(x,U_2)>\epsilon$.  When $d(U_1,\partial\Omega)>\epsilon$ we
  observe, in view of \eqref{eq:60}, that for $n=1,2$
  \begin{multline*}
    \int_{\partial B_n} 
\nabla_xG(x,\xi)a(\xi) \frac{\partial v_2}{\partial\nu}(\xi,\eta_1,\eta_2)\,ds_\xi=\\ \int_{\partial B_n} 
[\nabla_xG(x,\xi)-\nabla_xG(x,\eta)]a(\xi)\frac{\partial v_2}{\partial\nu}(\xi,\eta_1,\eta_2)\,ds_\xi\,.  
  \end{multline*} 
Integration by parts then yields
\begin{multline*}
  \int_{\partial U_1} 
[\nabla_xG(x,\xi)-\nabla_xG(x,\eta)]a(\xi)\frac{\partial v_2}{\partial\nu}(\xi,\eta_1,\eta_2)ds_\xi =\\
\int_{\Omega\setminus U_1}\big\{\zeta_2D^2_{x\xi}G+[\nabla_xG(x,\xi)-\nabla_xG(x,\eta)]\nabla\zeta_2\big\}\cdot
a\nabla v_2\,d\xi\,,
\end{multline*}
where $\zeta_2$ is given by \eqref{eq:69}. By \eqref{eq:52} and
\eqref{eq:59} we have that
\begin{multline}
\label{eq:73}
  \int_{\Omega\setminus U_1}|\zeta_2D^2_{x\xi}G\cdot a\nabla v_2|\,d\xi\leq
  C\frac{\epsilon^{3/2}}{d(x,U_1)^3}\|\nabla v_2\|_{2,o} \leq \\ \frac{C\epsilon^6}{|\eta_1-\eta_2|^3}\Big[ \frac{1}{|x-\eta_1|^3}
    +  \frac{1}{|x-\eta_2|^3}\Big]\big[1+(|\ln \delta_1|+|\ln
\delta_2|){\mathbf 1}_{B(\eta_1,4\epsilon)}(\eta_2)\big]^{1/2}\,,
\end{multline}
Similarly,
\begin{multline}
\label{eq:74}
  \int_{\Omega\setminus U_1}\big|\nabla_xG(x,\xi)-\nabla_xG(x,\eta)\big|\,|\nabla\zeta_2|a
|\nabla v_2|\,d\xi\leq
\epsilon\|D^2_{x\xi}G\|_{L^\infty(U_2)}\frac{C}{\epsilon}\|\nabla v_2\|_{L^1(U_2\setminus U_1)}\\ \leq \frac{C\epsilon^6}{|\eta_1-\eta_2|^3}\Big[ \frac{1}{|x-\eta_1|^3}
    +  \frac{1}{|x-\eta_2|^3}\Big] \big[1+(|\ln \delta_1|+|\ln
\delta_2|){\mathbf 1}_{B(\eta_1,4\epsilon)}(\eta_2)\big]^{1/2}\,. 
\end{multline}

When $d(U_1,\partial\Omega)\leq\epsilon$ we have
\begin{multline}
\label{eq:75}
  \int_{\partial U_1} \nabla_xG(x,\xi)a(\xi)\frac{\partial v_2}{\partial\nu}(\xi,\eta_1,\eta_2)ds_\xi =\\
\int_{\Omega\setminus U_1}\big\{\zeta_2D^2_{x\xi}G+ \nabla_xG(x,\xi)\nabla\zeta_2\big\}\cdot
a\nabla v_2\,d\xi\,.
\end{multline}
For the second term in the curly braces we have, in view of
\eqref{eq:55}, that
\begin{multline*}
  \int_{\Omega\setminus U_1}\nabla_xG(x,\xi)\nabla\zeta_2\cdot
a\nabla v_2\,d\xi\leq \frac{C}{d(x,U_1)^3} \|\nabla v_2\|_{L^1(\Omega\cap U_2\setminus U_1}\leq \\
\frac{C\epsilon^6}{|\eta_1-\eta_2|^3}\Big[ \frac{1}{|x-\eta_1|^3}
    +  \frac{1}{|x-\eta_2|^3}\Big]|\big[1+(|\ln \delta_1|+|\ln
\delta_2|){\mathbf 1}_{B(\eta_1,4\epsilon)}(\eta_2)\big]^{1/2}\,. 
\end{multline*}
As \eqref{eq:73} still holds when $d(U_1,\partial\Omega)\leq\epsilon$, we may use
the above, together with \eqref{eq:73} and \eqref{eq:75} to obtain
\begin{multline*}
\Big|\int_{\partial U_1} \nabla_xG(x,\xi)a(\xi)\frac{\partial
  v_2}{\partial\nu}(\xi,\eta_1,\eta_2)ds_\xi|\leq\\ \frac{C\epsilon^6}{|\eta_1-\eta_2|^3}\Big[ \frac{1}{|x-\eta_1|^3}
    +  \frac{1}{|x-\eta_2|^3}\Big]\big[1+(|\ln \delta_1|+|\ln\delta_2|){\mathbf
      1}_{B(\eta_1,4\epsilon)}(\eta_2)\big]^{1/2}\,.
\end{multline*}
In conjunction with \eqref{eq:73} and \eqref{eq:74} the above
inequality yields for all $U_1\subset\Omega$
\begin{multline}
\label{eq:76}
  \Big|\int_{\partial U_1} 
\nabla_xG(x,\xi)a(\xi) \frac{\partial v_2}{\partial\nu}(\xi,\eta_1,\eta_2)\,ds_\xi\Big| \leq\\ \frac{C\epsilon^6}{|\eta_1-\eta_2|^3}\Big[ \frac{1}{|x-\eta_1|^3}
    +  \frac{1}{|x-\eta_2|^3}\Big]|[1+(|\ln \delta_1|+|\ln\delta_2|){\mathbf
      1}_{B(\eta_1,4\epsilon)}(\eta_2)]^{1/2}\,.
\end{multline}

Finally, for $n=1,2$, we have whenever $\eta_n\in\Omega_\epsilon$
\begin{displaymath}
 -\int_{\partial B_n}
  [\nabla_xG(x,\xi)-\nabla_xG(x,\eta)]a(\xi)\frac{\partial\phi_1}{\partial\nu}(\xi,\eta_{3-n})\,ds_\xi=
  \int_{B_n}D^2_{x\xi}G\cdot a\nabla \phi_1(\xi,\eta_{3-n})\,d\xi\,.
\end{displaymath}
By \eqref{eq:52} and \eqref{eq:50} we then obtain  for $n=1,2$ and $d(x,U_2)>\epsilon$
\begin{displaymath}
  \int_{B_n}|D^2_{x\xi}G\cdot a\nabla \phi_1(\xi,\eta_{3-n})|\,d\xi \leq
  C\frac{\epsilon^6}{|\eta_1-\eta_2|^3|x-\eta_n|^3} \,.
\end{displaymath}
Combining the above with \eqref{eq:76} yields
\begin{equation}
\label{eq:77}
   |\nabla v_2(x,\eta_1,\eta_2)| \leq \frac{C\epsilon^6}{|\eta_1-\eta_2|^3}\Big[ \frac{1}{|x-\eta_1|^3}
    +  \frac{1}{|x-\eta_2|^3}\Big]|[1+(|\ln \delta_1|+|\ln\delta_2|){\mathbf
      1}_{B(\eta_1,4\epsilon)}(\eta_2)]^{1/2}\,,
 \end{equation}
from which \eqref{eq:72} readily follows for $d(x,U_2)>\epsilon$.  If
$d(x,U_2)\leq\epsilon$ the lemma follows immediately from \eqref{eq:59}. 
\end{proof}

\section{Error estimates}
\label{sec:error-estimates}
\begin{equation}
 \label{eq:78}
\phi(x,\eta_1,\ldots,\eta_N)=\bar{\phi}(x) + \sum_{i=1}^N \Big[ \phi_1(x,\eta_i) +
\frac{1}{2}\sum_{
  \begin{subarray}{2}
    j=1 \\
    j\neq i
  \end{subarray}}^N v_2(x,\eta_i,\eta_j) \Big] + u \,,
\end{equation}
in which $\phi_1$ is defined by \eqref{eq:28} and $v_2$ by
\eqref{eq:58}.  By \eqref{eq:1}, \eqref{eq:49}, and \eqref{eq:60}
$(u,\{C_n\}_{n=1}^N)$ is the solution of
\begin{subequations}
 \label{eq:79} 
\begin{empheq}[left={\empheqlbrace}]{alignat=2}  
  &  \LL u  = 0 & \text{in } \Omega\setminus\Union_{n=1}^N B_n \,, \\
   & u= 0 & \text{on } \partial\Omega \,, \\
  &  u=C_n-\frac{1}{2}\sum_{
    \begin{subarray}{c}\strut
      k=1 \\
      k\neq n
    \end{subarray}}^N
\sum_{
    \begin{subarray}{c}\strut
      m=1 \\
      m\neq k,n
    \end{subarray}}^N v_2(\cdot,\eta_k,\eta_m) & \quad\text{ in } B_n\,,\;1\leq n\leq N \,, \\
  &  \int_{\partial B_n} a\frac{\partial u}{\partial\nu} \,ds =0 \,. &
  \end{empheq}
\end{subequations}
 Clearly, the restriction of 
$u$ to $\Omega\setminus\Union_{n=1}^NB_n$ is the minimizer of \eqref{eq:15} in 
\begin{displaymath}
  Y_N\overset{def}{=}X_N\bigg(0,\bigg\{C_n- \frac{1}{2}\sum_{
    \begin{subarray}{c}\strut
      k=1 \\
      k\neq n
    \end{subarray}}^N
\sum_{
    \begin{subarray}{c}\strut
      m=1 \\
      m\neq k,n
    \end{subarray}}^N v_2(\cdot,\eta_k,\eta_m) |_{\partial B_n}\bigg\}_{n=1}^N \bigg)\,.
\end{displaymath}
For convenience, we define, as in \S\,4 the norm
\begin{displaymath}
  \|\cdot\|_{2,o}= \|\cdot\|_{L^2\big(\Omega\setminus\Union_{n=1}^NB_n\big)} \,.
\end{displaymath}

We now set
\begin{displaymath}
  \{1,\ldots,N\}=\Union_{k=1}^M \J_k\,
\end{displaymath}
where the $\J_k$'s are selected so that
\begin{displaymath}
  \max_{m\in\J_k} \min_{l\in\J_k} |\eta_m-\eta_l|\leq4\epsilon \,,
\end{displaymath}
and
\begin{displaymath}
  \min_{(m,n)\in\J_k\times\J_j}|\eta_m-\eta_n|>4\epsilon
  \quad k\neq j\,.
\end{displaymath}
It can be easily verified that the above selection exists and is
unique. For convenience of notation we also set for $1\leq j\leq M$
\begin{displaymath}
  \J_j^c = \{1,\ldots,N\}\setminus\J_j
\end{displaymath}
In a similar manner to \eqref{eq:64} we then define
\begin{equation}
\label{eq:80}
  \Ug_j^n=\Big(\Union_{m\in\J_j} B(\eta_m,n\epsilon)\Big)\cap\Omega \,.
\end{equation}
Thus,
\begin{displaymath}
  \Ug_j^0 = \{\eta_m\}_{m\in\J_j}\,.
\end{displaymath}
We can now begin our attempt to construct a test function
$\tilde{u}\in Y_N$ in the form
\begin{equation}
\label{eq:81}
  \tilde{u}=
  \begin{cases}
    \sum_{j=1}^M u_j & \text{in }\Omega\setminus\Union_{n=1}^NB_n \\
    u +C_n &  \text{in } B_n \;\forall1\leq n\leq N \,,
  \end{cases}
\end{equation}
where $u_j$ is supported on $\Ug_j^2$. 

We further set 
\begin{displaymath}
  \Kg_n = \Union_{j:\,|\J_j|=n} \J_j \,,
\end{displaymath}
and then,
\begin{displaymath}
  \Ig_m = \Union_{n\geq m} \Kg_n\,.
\end{displaymath}
Let $\tilde{u}$ be given by \eqref{eq:81}. For 
$\tilde{u}\in Y_N$ we have $\|a^{1/2}\nabla u\|_{2,o}\leq
\|a^{1/2}\nabla\tilde{u}\|_{2,o}$. 
We shall construct $u_j$ so that 
\begin{equation}
\label{eq:82}
  u_j=u+C_k \text{ in } B_k \; \forall k\in\J_j \,,
\end{equation}
and hence  $\|a^{1/2}\nabla u\|_2\leq\|a^{1/2}\nabla\tilde{u}\|_2$. Consequently,
\begin{displaymath}
  \Eb_f\Big( \|\nabla u\|_2^2\Big) \leq C\Eb_f\Big( \|\nabla \tilde{u}\|_2^2\Big)\,.
\end{displaymath}
Note further that by the mutually disjoint support of the $u_j$s we
have 
\begin{displaymath}
  \Eb_f\Big( \|\nabla u\|_2^2\Big) \leq C\sum_{j=1}^M   \Eb_f\Big( \|\nabla
  u_j\|_2^2\Big)\,.
\end{displaymath}

The estimate of $ \Eb_f\Big( \|\nabla
  u_j\|_2^2\Big)$ is split in the following into three different
  cases: $j\in\Kg_1$, $j\in\Kg_2$, and $j\in\Ig_3$. We begin with the
  first of them where $\J_j=\{m_j\}$ for some $1\leq m_j\leq N$. Set  then
\begin{equation}
\label{eq:83}
    u_j = \frac{1}{2}\chi(|\cdot-\eta_{m_j}|/\epsilon)\sum_{
    \begin{subarray}{c}\strut
      (k,m)\in[\J_j^c]^2 \\
      k\neq m
    \end{subarray}}
[v_2(\cdot,\eta_k,\eta_m)- C^j_{km}]\,,
\end{equation}
where $\chi$ is given by \eqref{eq:25} and
\begin{equation}
\label{eq:84}
  C_{km}^j =v_2(z,\eta_i,\eta_k) \text{ wherein } |v_2(z,\eta_i,\eta_k)|= \min_{x\in\Ug_j^2}  |v_2(x,\eta_i,\eta_k)|\,.
\end{equation}
Note that the above definition guarantees that $u_j\in H^1_0(\Omega)$, even
in cases where $ \Ug_j^2\cap\partial\Omega\neq\emptyset$. Furthermore, for $1\leq n\leq N$
\begin{equation}
\label{eq:85}
  (u-u_j)|_{B_n} =
  \begin{cases}
    C_{m_j} & n=m_j \\
    0 & \text{otherwise}
  \end{cases}\,.
\end{equation}

We now prove
\begin{lemma}
  There exists $C(\Omega,f,\lambda,\Lambda)>0$ such that
  \begin{equation}
 \label{eq:86}
 \Eb_f\Big(\sum_{j:\,m_j\in\Kg_1}\|\nabla u_j\|_2^2\Big) \leq C\bar{\beta}^3\,.
  \end{equation}
\end{lemma}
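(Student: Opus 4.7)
The geometry is friendly: $j\in\Kg_1$ means $\J_j=\{m_j\}$ is a singleton cluster, so every $k\in\J_j^c$ satisfies $|\eta_{m_j}-\eta_k|>4\epsilon$, and the support of $\chi(|\cdot-\eta_{m_j}|/\epsilon)$ lies in $B(\eta_{m_j},2\epsilon)$, which is disjoint from $B_k$ for all such $k$. Applying the Leibniz rule to (\ref{eq:83}) splits
\begin{displaymath}
\|\nabla u_j\|_2 \leq \tfrac{1}{2}\sum_{\substack{k,m\in\J_j^c \\ k\neq m}} \Big(\|\chi\,\nabla v_2(\cdot,\eta_k,\eta_m)\|_2 + \|(v_2(\cdot,\eta_k,\eta_m)-C_{km}^j)\nabla\chi\|_2\Big)\,.
\end{displaymath}
The first term is controlled by covering $B(\eta_{m_j},2\epsilon)\cap\Omega$ by a bounded number of balls of radius $\epsilon$ and invoking (\ref{eq:72}) with $z=\eta_{m_j}$. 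For the second, the pointwise bound (\ref{eq:77}) yields $\|\nabla v_2(\cdot,\eta_k,\eta_m)\|_{L^\infty(\Ug_j^2)}$; since $v_2(\cdot,\eta_k,\eta_m)$ is harmonic on $B(\eta_{m_j},2\epsilon)\cap\Omega$ and vanishes on $\partial\Omega$, either $C_{km}^j=0$ (when $\Ug_j^2\cap\partial\Omega\neq\emptyset$ the minimum of $|v_2|$ is zero) or $v_2$ is harmonic on all of $\Ug_j^2$, and in both cases an oscillation argument gives $\|v_2-C_{km}^j\|_{L^\infty(\Ug_j^2)}\leq C\epsilon\|\nabla v_2\|_{L^\infty(\Ug_j^2)}$. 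Combined with $\|\nabla\chi\|_2\leq C\epsilon^{1/2}$, the two pieces agree in scaling, so
\begin{displaymath}
\|\nabla u_j\|_2 \leq C \sum_{\substack{k,m\in\J_j^c \\ k\neq m}} \frac{\epsilon^{15/2}}{|\eta_k-\eta_m|^3}\Big[\frac{1}{|\eta_{m_j}-\eta_k|^3}+\frac{1}{|\eta_{m_j}-\eta_m|^3}\Big][1+\ell_{km}]^{1/2},
\end{displaymath}
where $\ell_{km}=(|\ln\delta_k|+|\ln\delta_m|){\mathbf 1}_{B(\eta_k,4\epsilon)}(\eta_m)$.

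Squaring this bound and extending the outer sum $\sum_{j:m_j\in\Kg_1}$ to $\sum_{n=1}^N$ (for an upper bound, replacing $m_j$ by $n$) yields a sum over five indices $(n,k_1,m_1,k_2,m_2)$. Since five indices lie within the range $5\leq\bar{\beta}^{-1/2}$ allowed by the regime (\ref{eq:6}), hypothesis (\ref{eq:7}) permits me to marginalize $f_N$ and replace the five-particle density by the constant $C_0^5$. The remaining volume integral factorizes: four iterated integrations of the singular kernel $|\cdot-\cdot|^{-3}$ against Lebesgue measure restricted to separations $\geq 2\epsilon$ contribute at most $C\ln\epsilon^{-1}$ each, the $\ell_{km}$ log-corrections are supported on an annulus of width $\epsilon$ on which the kernel integrates to an $O(1)$ factor (so they produce at most a bounded polynomial in log factors), and the final integration over $\eta_n$ yields $|\Omega|$. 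Using $N^5\epsilon^{15}\leq C\bar{\beta}^5$, this gives
\begin{displaymath}
\Eb_f\Big(\sum_{j:m_j\in\Kg_1}\|\nabla u_j\|_2^2\Big) \leq C\bar{\beta}^5\ln^4\epsilon^{-1}\,.
\end{displaymath}
Condition (\ref{eq:6}) then supplies $\bar{\beta}\ln^4\epsilon^{-1}\leq C$, whence $\bar{\beta}^5\ln^4\epsilon^{-1}\leq C\bar{\beta}^4\leq C\bar{\beta}^3$, which is (\ref{eq:86}).

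The main obstacle is the bookkeeping of log factors---those arising from near-boundary or near-clustered particles in the bounds (\ref{eq:72}) and (\ref{eq:77}), and those generated by the iterated singular-kernel integrations---and verifying that together they remain within the $\ln^4\epsilon^{-1}$ margin granted by the dilution condition (\ref{eq:6}). A secondary subtlety is that the oscillation bound on $v_2-C_{km}^j$ must be valid both when $\Ug_j^2$ is an interior ball and when it meets $\partial\Omega$; the choice $C_{km}^j=v_2(z,\eta_k,\eta_m)$ at a minimizer of $|v_2|$ in $\Ug_j^2$ is precisely what unifies these two cases and also guarantees $u_j\in H^1_0(\Omega)$.
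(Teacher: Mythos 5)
Your estimate of $\|\nabla u_j\|_2$ for a singleton cluster follows the paper's route: use the separation $|\eta_{m_j}-\eta_k|>4\epsilon$ to place the pointwise bound \eqref{eq:77} and the local $L^2$ bound \eqref{eq:72} on $\Ug_j^2$, combine them through the Leibniz rule, and obtain (up to symmetrisation) exactly the quantity the paper calls $\tilde{u}_1(m_j)$ in \eqref{eq:87}. That part is fine, and your remark about the choice $C^j_{km}=v_2(z,\cdot)$ at a minimiser of $|v_2|$ is indeed the mechanism that keeps $u_j\in H^1_0(\Omega)$ in both interior and near-boundary cases.

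The gap is in passing to the expectation. You square the sum over pairs, extend the outer index to all of $\{1,\dots,N\}$, obtain a sum over five-tuples $(n,k_1,m_1,k_2,m_2)$, and then ``replace the five-particle density by $C_0^5$'' and read off $\bar\beta^5\ln^4\epsilon^{-1}$. This treats every five-tuple as if its entries were distinct. But the square of a double sum necessarily contains the diagonal $\{(k_1,m_1)=(k_2,m_2)\}$ and the one-overlap set $\{k_1,m_1\}\cap\{k_2,m_2\}\neq\emptyset$; on those sets you are looking at the third and fourth marginals $f_3,f_4$, not $f_5$, and the combinatorial weight is $N^3$ (resp.\ $N^4$), not $N^5$. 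The diagonal contributes $\bar\beta^3\int_{\Omega^3}\epsilon^6(1+d_{23})|\eta_2-\eta_3|^{-6}|\eta_1-\eta_2|^{-6}f_3$, which is $O(\bar\beta^3)$ with no logarithmic gain at all, because $\int_{|y|>2\epsilon}|y|^{-6}dy\sim\epsilon^{-3}$ exactly cancels the $\epsilon^3$ in $N\epsilon^3\sim\bar\beta$. So the true expectation is $\Theta(\bar\beta^3)$, not $O(\bar\beta^5\ln^4\epsilon^{-1})$; your intermediate inequality is false even though the final conclusion is the correct bound. This is precisely the case analysis the paper carries out in \eqref{eq:88}: three terms with prefactors $\bar\beta^3,\bar\beta^4,\bar\beta^5$, corresponding to $3$-, $4$-, and $5$-fold distinct index sets, of which the first term is dominant and already gives $C\bar\beta^3$, while the latter two are brought below $\bar\beta^3$ by invoking \eqref{eq:6}. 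To close your argument you must split the squared sum accordingly and treat each coincidence pattern with its own marginal; only after that does \eqref{eq:6} finish the job.

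A minor point: you invoke ``harmonicity'' of $v_2$ for the oscillation estimate, but $v_2$ solves $\LL v_2=0$ with variable $a$, not $\Delta v_2=0$. The oscillation bound $\|v_2-C^j_{km}\|_{L^\infty(\Ug_j^2)}\le C\epsilon\|\nabla v_2\|_{L^\infty}$ needs only the gradient estimate \eqref{eq:77} and the fact that $\Ug_j^2$ is connected with diameter $O(\epsilon)$; no harmonicity is needed or available.
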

\begin{proof}
 Since $d(B(\eta_{m_j},\epsilon),B(\eta_m,2\epsilon)\cup B(\eta_k,2\epsilon))>\epsilon$, we may use
\eqref{eq:77} to obtain that
\begin{displaymath}
  \|v_2(\cdot,\eta_k,\eta_m)- C^j_{km}\|_{L^\infty(B(\eta_{m_j},2\epsilon)}\leq C\frac{\epsilon^7}{|\eta_k-\eta_m|^3} \Big[ \frac{1}{|\eta_{m_j}-\eta_k|^3}
    +  \frac{1}{|\eta_{m_j}-\eta_m|^3}\Big](1+d_{km})^{1/2}\,,
\end{displaymath}
where $d_{km}$ is given, for $k\neq m$, by
\begin{displaymath}
  d_{km} = (|\ln \delta_k|+|\ln
\delta_m|){\mathbf 1}_{B(\eta_k,4\epsilon)}(\eta_m)\,.
\end{displaymath}
By the above
and \eqref{eq:72} we then obtain that
\begin{subequations}
\label{eq:87}
  \begin{equation}
  \|\nabla u_j\|_2\leq  C\tilde{u}_1(m_j)\,,
\end{equation}
where
\begin{equation}
  \tilde{u}_1(m_j)\overset{def}{=}\sum_{
    \begin{subarray}{c}\strut
      (k,m)\in[\J_j^c]^2 \\
      k\neq m
    \end{subarray}} \frac{\epsilon^{15/2}}{|\eta_m-\eta_k|^3}\frac{(1+ d_{km})^{1/2}}{|\eta_{m_j}-\eta_k|^3}
    \end{equation}
\end{subequations}
We now write,  in view of \eqref{eq:87}
\begin{multline*}
 \Eb_f\Big(\sum_{j:\,m_j\in\Kg_1}\|\nabla u_j\|_2^2\Big) \leq
 C\Eb_f\Big(\sum_{j:\,m_j\in\Kg_1}|\tilde{u}_1(m_j)|^2\Big) \\
    \leq C\Eb_f\Big(\sum_{i=1}^N|\tilde{u}_1(i)|^2\Big)= CN\Eb_f\big(|\tilde{u}_1(1)|^2\big)\,.
 \end{multline*}
As (cf. \cite[Eq. 3.7-3.15]{al13})
  \begin{multline}
\label{eq:88}
 N\Eb_f\big(|\tilde{u}_1(1)|^2\big)\leq
 C\Big[\bar{\beta}^3\int_{\Omega^3}\frac{\epsilon^6(1+d_{23}) }{|\eta_2-\eta_3|^6|\eta_1-\eta_2|^6}f_3(\eta_1,\eta_2,\eta_3) \,d\eta_1d\eta_2d\eta_3\\   
+\bar{\beta}^4\int_{\Omega^4}\Big[ \frac{1}{|\eta_2-\eta_1|^6}
    +  \frac{1}{|\eta_3-\eta_1|^6}\Big]\frac{\epsilon^3(1+d_{23})^{1/2}(1+d_{24})^{1/2}}{|\eta_2-\eta_4|^3|\eta_2-\eta_3|^3}
  f_4(\eta_1,\ldots,\eta_4) \,d\eta_1\cdots d\eta_4\, +\\ 
\bar{\beta}^5\int_{\Omega^5}\frac{(1+d_{23})^{1/2}(1+d_{45})^{1/2}}{|\eta_2-\eta_3|^3|\eta_4-\eta_5|^3|\eta_1-\eta_2|^3|\eta_1-\eta_4|^3}
    f_5(\eta_1,\ldots,\eta_5)\,d\eta_1\cdots d\eta_5\Big] \,,
\end{multline}
we obtain \eqref{eq:86} by \eqref{eq:6} and \eqref{eq:7}. 
 \end{proof}

Next consider the case where
\begin{displaymath}
  |\J_j|=2 \,,
\end{displaymath}
where we set $\J_j=\{m_{j1},m_{j2}\}$. Let further $\zeta_j\in
C^1(\Omega,[0,1])$ denote the cutoff function satisfying
\begin{equation}
  \label{eq:89}
\zeta_j(x) =
\begin{cases}
  1 & x\in\Ug_j^1 \\
  0 & x\in\Omega\setminus\Ug_j^2 \,,
\end{cases}
\quad |\nabla\zeta_j|\leq \frac{C}{\epsilon} \,.
\end{equation}
Then, set
\begin{equation}
  \label{eq:90}
u_j = -\zeta_j \Big[\sum_{i\in\J_j^c} [\phi_1(\cdot,\eta_i)-C^j_i]+\frac{1}{2}\sum_{
\begin{subarray}{c}\strut
  (k,m)\in[\J_j^c]^2 \\
  k\neq m
\end{subarray}}
    [v_2(\cdot,\eta_k,\eta_m)-
    C_{km}^j] \Big]
\end{equation}
where $C_i^j$ is given by
\begin{displaymath}
  C_i^j=\phi_1(z,\eta_i) \text{ wherein } |\phi_1(z,\eta_i)| = \min_{x\in \Ug_j^2}|\phi_1(x,\eta_i)|\,,
\end{displaymath}
and $C_{km}^j$ by \eqref{eq:84}. Note that by the above definition
$u_j\in H^1_0(\Omega)$, and furthermore for $1\leq n\leq N$, 
\begin{equation}
\label{eq:91}
  (u_j-u)|_{B_n} =
  \begin{cases}
  -\big(\phi-\psi_2(\cdot,\eta_{m_{j1}},\eta_{m_{j2}})\big)\big|_{B_n}+\tilde{C}_n =C_n &
  n\in\J_j  \\
0 & \text{otherwise}
  \end{cases} \,.
\end{equation}

We now prove
\begin{lemma}
  There exists $C(\Omega,f)>0$ such that
  \begin{equation}
\label{eq:92}
 \Eb_f\Big(\sum_{j:|J_j|=2}\|\nabla u_j\|_2^2\Big) \leq C\bar{\beta}^3\,.
  \end{equation}
\end{lemma}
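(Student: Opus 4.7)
The plan is to follow the pattern of the previous lemma, adapted to two-particle clusters. First, differentiating \eqref{eq:90}, I would write $\nabla u_j=-\nabla\zeta_j\,\mathcal{R}_j-\zeta_j\,\nabla\mathcal{R}_j$, where $\mathcal{R}_j$ denotes the bracketed expression in \eqref{eq:90}. The support of $u_j$ is contained in $\Ug_j^2$, which for $|\J_j|=2$ is a union of two $2\epsilon$-balls of total volume at most $C\epsilon^3$, and $\nabla\zeta_j$ is supported on the annular region $\Ug_j^2\setminus\Ug_j^1$ of the same order. Since $|\eta_{m_{j1}}-\eta_{m_{j2}}|\leq 4\epsilon$ by the definition of a cluster, $\Ug_j^2$ is connected of diameter at most $C\epsilon$, which is what enables the oscillation bounds below. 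The goal is to estimate each of the two pieces of $\nabla u_j$ so that the result is termwise summable in the indices $i\in\J_j^c$ and $(k,m)\in[\J_j^c]^2$.

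For the $\nabla\zeta_j$ piece I would exploit that each constant $C_i^j$, $C_{km}^j$ is a value actually attained on $\Ug_j^2$: since $\mathrm{diam}(\Ug_j^2)\leq C\epsilon$, an oscillation argument based on \eqref{eq:56} gives
$\|\phi_1(\cdot,\eta_i)-C_i^j\|_{L^\infty(\Ug_j^2)}\leq C\epsilon\,\|\nabla\phi_1(\cdot,\eta_i)\|_{L^\infty(\Ug_j^2)}\leq C\epsilon^4/|\eta_i-\eta_{m_{j1}}|^3$,
with the borderline regime $|\eta_i-\eta_{m_{j1}}|\lesssim\epsilon$ absorbed via the global bound \eqref{eq:48}; similarly \eqref{eq:77} controls $\|v_2(\cdot,\eta_k,\eta_m)-C_{km}^j\|_{L^\infty(\Ug_j^2)}$, with boundary-cluster borderlines handled by \eqref{eq:70}. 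For the $\zeta_j\nabla\mathcal{R}_j$ piece I would cover $\Ug_j^2$ by a bounded number of $\epsilon$-balls and invoke the local $L^2$ estimates \eqref{eq:50} and \eqref{eq:72}. Combining the oscillation bounds with the factor $\epsilon^{-1}(\epsilon^3)^{1/2}=\epsilon^{1/2}$ from $\|\nabla\zeta_j\|_2$ yields a pointwise-in-cluster bound for $\|\nabla u_j\|_2$ of the same shape as $\tilde u_1(m_j)$ in \eqref{eq:87}, enriched by an additional inner sum over $i\in\J_j^c$ coming from the one-particle contribution newly present in \eqref{eq:90}.

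Squaring, summing over $j$ with $|\J_j|=2$ and expanding converts the sum over clusters into a sum over ordered pairs $(p,q)=(\eta_{m_{j1}},\eta_{m_{j2}})$ subject to $|p-q|\leq 4\epsilon$, whose expectation contributes an extra factor $\int_{|p-q|\leq 4\epsilon}f_2(p,q)\,dp\,dq\lesssim\epsilon^3$. The Cauchy--Schwarz expansion of the squared multi-index sums produces moments involving at most five distinct centres, so the marginal bound \eqref{eq:7} is applicable since $5\leq\bar\beta^{-1/2}$ under \eqref{eq:6}. The resulting integrals are of exactly the type already appearing in \eqref{eq:88}, except for the additional $\epsilon^3$ pair-constraint factor, which compensates precisely for the one summation index that is lost relative to the $|\J_j|=1$ case. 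Carrying out the integrations and collecting powers of $N\epsilon^3\sim\bar\beta$ then yields \eqref{eq:92}. I expect the main obstacle to be the careful bookkeeping of the logarithmic factors $d_{km}$ that appear whenever a near-boundary third particle $\eta_k$ is within $4\epsilon$ of another near-boundary particle $\eta_m$: one must split the $d\eta_k$ integration according to whether $\eta_k$ lies in the boundary layer and exploit that the layer contributes only $\epsilon\,|\ln\epsilon|$-type factors, which are dominated by the $\epsilon^3$-constraint already present in the expectation.
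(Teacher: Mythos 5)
Your proposal is correct and follows essentially the same route as the paper: a product-rule decomposition of $\nabla u_j$, oscillation/$L^\infty$ bounds (via \eqref{eq:56}, \eqref{eq:70}, \eqref{eq:77}) against $\|\nabla\zeta_j\|_2\sim\epsilon^{1/2}$ for the cutoff-gradient piece, local $L^2$ bounds (\eqref{eq:50}, \eqref{eq:72}) for the $\zeta_j\nabla(\cdot)$ piece, leading to a cluster-level bound $\|\nabla u_j\|_2\leq C\tilde u_2(\J_j)$ of the form \eqref{eq:95}, after which the $\epsilon^3$ pair-constraint factor from $|\eta_{m_{j1}}-\eta_{m_{j2}}|\leq4\epsilon$ offsets the extra cluster index and the expectation reduces to \eqref{eq:88}-type integrals handled by \eqref{eq:6} and \eqref{eq:7}. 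One small remark: the ``borderline regime $|\eta_i-\eta_{m_{j1}}|\lesssim\epsilon$'' you guard against via \eqref{eq:48} never actually occurs for $i\in\J_j^c$, since the cluster decomposition forces $\min_{l\in\J_j}|\eta_i-\eta_l|>4\epsilon$, so that safeguard is superfluous.
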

\begin{proof}
  We use \eqref{eq:56} to obtain that
\begin{displaymath}
  \|\phi_1(\cdot,\eta_i)-C^j_i\|_{L^\infty(\Ug_{j}^2)}\leq C\epsilon^4 \Big[\frac{1}{|\eta_{m_{j1}}-\eta_i|^3}
    +  \frac{1}{|\eta_{m_{j2}}-\eta_i|^3}\Big]  \quad
    i\in\J_j^c \,.
\end{displaymath}
Hence, using the above and \eqref{eq:56} once again,
\begin{multline}
\label{eq:93}
   \|\nabla(\zeta_j[\phi_1(\cdot,\eta_i)-C^j_i])\|_{L^2(\Ug_{j}^2)} \leq \\C\epsilon^{9/2} \Big[\frac{1}{|\eta_{m_{j1}}-\eta_i|^3}
    +  \frac{1}{|\eta_{m_{j2}}-\eta_i|^3}\Big]  \quad
    i\in\J_j^c \,.
\end{multline}
Furthermore, by \eqref{eq:70},  we have that 
\begin{displaymath}
  \|v_2(\cdot,\eta_{m_{ji}},\eta_k)- C_{km}^j\|_{L^\infty(\Ug_{j}^2)}
  \leq C\frac{\epsilon^4}{|\eta_k-\eta_{m_{ji}}|^3}(1+d_{km})^{1/2}\quad i=1,2 \; k\in\J_j^c \,,
\end{displaymath} 
and hence, by the above and \eqref{eq:72}
\begin{multline}
\label{eq:94}
  \|\nabla(\zeta_j[v_2(\cdot,\eta_{m_{ji}},\eta_k)- C_{km_{ji}}^j])\|_{L^2(\Ug_{j}^2)}
  \leq \frac{C\epsilon^{9/2}}{|\eta_{m_{ji}}-\eta_k|^3}(1+d_{km})^{1/2} \\ \quad i=1,2\,, \;
  k\in\J_j^c \,.
\end{multline}
Finally, by \eqref{eq:77} we have for all
$(m,k)\in[\{1,\ldots,N\}\setminus\{m_{j1},m_{j2}\}]^2$ 
\begin{displaymath}
    \|\nabla(\zeta_j[v_2(\cdot,\eta_m,\eta_k)- C_{km}^j])\|_{L^2(\Ug_{j}^2)}
    \leq
    \frac{C\epsilon^{15/2}}{|\eta_m-\eta_k|^3}\Big[\frac{1}{d(\eta_m,\Ug_j^0)^3}+ 
    \frac{1}{d(\eta_k,\Ug_j^0)^3} \Big](1+d_{km})^{1/2}\,.
\end{displaymath}
Then, by the above, \eqref{eq:94},\eqref{eq:93}, \eqref{eq:90}
and \eqref{eq:87} we obtain that for some positive $C(\Omega,f,\Lambda,\lambda)$ 
\begin{subequations}
  \label{eq:95}
   \begin{equation}
     \|\nabla u_j\|_2\leq C\tilde{u}_2(\J_j)
  \end{equation}
in which
\begin{equation}
\tilde{u}_2(\J_j)=\tilde{u}_1(m_{j1}) + \tilde{u}_1(m_{j1})+\epsilon^{9/2}\sum_{
      i\in\J_j^c}
  \frac{1}{d(\eta_i,\Ug_j^0)^3}\,,
\end{equation}
\end{subequations}
wherein $\tilde{u}_1(n)$ is given by (\ref{eq:87}b).

We now write
\begin{multline*}
  \Eb_f\Big(\sum_{j:|\J_j|=2}\|\nabla u_j\|_2^2\Big) \leq C\Eb_f\Big(\sum_{m_{j1},m_{j2}\in\Kg_2}|\tilde{u}_2(m_{j1},m_{j2})|^2\Big)
\\\leq CN(N-1)\Eb_f\Big(|\tilde{u}_2(1,2)|^2\,\Big|\,|\eta_2-\eta_1|\leq4\epsilon \Big)\,.
\end{multline*}
Consequently, with the aid of  \eqref{eq:88} and \eqref{eq:86} we obtain
\begin{multline*}
  \Eb_f\Big(\sum_{j:|\J_j|=2}\|\nabla u_j\|_2^2\Big)
  \leq 
  C\bar{\beta}^3\int_{\Omega\times B(\eta_1,4\epsilon)\times\Omega}\frac{1}{|\eta_3-\eta_1|^6}
 f_3(\eta_1,\eta_2,\eta_3)\,d\eta_1d\eta_2d\eta_3 + \\ 
C\bar{\beta}^3N\int_{\Omega\times B(\eta_1,4\epsilon)\times\Omega^2}\frac{1}{|\eta_3-\eta_1|^3}\frac{1}{|\eta_4-\eta_1|^3}
  f_4(\eta_1,\eta_2,\eta_3)\,d\eta_1\cdots d\eta_4 + C\bar{\beta}^3\,,
\end{multline*}
which easily yields \eqref{eq:92} in view of \eqref{eq:6} and \eqref{eq:7} .
\end{proof}

Finally, we consider the case $|J_j|=K(j)\geq3$, and let
\begin{displaymath}
  \J_j=\{m_{j1},\ldots,m_{jK}\}\,.
\end{displaymath}
In this case we set
\begin{equation}
  \label{eq:96}
u_j=-\zeta_j\kappa_j\Big\{\bar{\phi}-(\bar{\phi})_{\Ug_j^2} + \sum_{i=1}^N \Big[
\phi_1(\cdot,\eta_i) -C_i^j+
\frac{1}{2}\sum_{
  \begin{subarray}{c}\strut
    k=1 \\
    k\neq i
  \end{subarray}}^N v_2(\cdot,\eta_i,\eta_k) -C_{ik}^j\Big]\Big\}\,,
\end{equation}
where $\kappa_j$ is the minimizer of \eqref{eq:20}, i.e.,
\begin{displaymath}
   \|\nabla\kappa_j\|_2^2 = C_K(\eta_{m_{j1}},\ldots,\eta_{m_{jK}}) \,.
\end{displaymath}
By the maximum principle $\|\kappa_j\|_\infty=1$. 
Note that by the definition of $\kappa_j$ it then follows that
$u_j\in H^1_0(\Omega)$. Furthermore, for all $1\leq n\leq N$,
\begin{equation}
\label{eq:97}
  (u_j-u)|_{\partial B_n} =
  \begin{cases}
  -\phi|_{\partial B_n}+\tilde{C}_n =C_n &
  n\in\J_j  \\
0 & \text{otherwise}
  \end{cases} \,.
\end{equation}

We now prove
\begin{lemma}
   There exists $C(\Omega,f,\lambda,\Lambda)>0$ such that
  \begin{equation}
\label{eq:98}
 \Eb_f\Big(\sum_{j:|J_j|\geq3}\|\nabla u_j\|_2^2\Big) \leq C\bar{\beta}^{5/2}\,.
  \end{equation}
\end{lemma}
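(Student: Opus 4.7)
The plan is to apply the product rule to $u_j$ as defined in \eqref{eq:96}, yielding three contributions: one from $\nabla\zeta_j$ acting on $\kappa_j\{\cdot\}$, one from $\zeta_j\nabla\kappa_j$ acting on the bracket, and one from $\zeta_j\kappa_j$ acting on the gradient of the bracket. For the first two I pull the bracket out in $L^\infty(\Ug_j^2)$; for the third I use the local $L^2$ gradient bounds \eqref{eq:50} and \eqref{eq:72}. The capacity estimate \eqref{eq:23} furnishes $\|\nabla\kappa_j\|_2^2\leq CK\epsilon(1+\max_{k\in\J_j}|\ln\delta_k|)$, while $|\nabla\zeta_j|\leq C/\epsilon$ on a set of volume $\lesssim K\epsilon^3$ yields a comparable contribution.

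I would split the bracket in \eqref{eq:96} into a smooth piece $\bar\phi-(\bar\phi)_{\Ug_j^2}$, of order $K\epsilon$ by Lipschitz continuity of $\bar\phi$ and $\mathrm{diam}\,\Ug_j^2\lesssim K\epsilon$; near contributions for $i\in\J_j$, controlled pointwise by \eqref{eq:48} and \eqref{eq:70}; and far contributions for $i\in\J_j^c$, treated as oscillations over $\Ug_j^2$ of $\phi_1(\cdot,\eta_i)$ and $v_2(\cdot,\eta_i,\eta_k)$ that are bounded via the pointwise gradient estimates \eqref{eq:56} and \eqref{eq:77}. The resulting pathwise (i.e.\ conditional on the particle positions) bound has a near part of order $K^3\epsilon^3(1+\max_k|\ln\delta_k|)$ and a far part resembling the functionals $\tilde u_1$ and $\tilde u_2$ from the two preceding lemmas.

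The expectation is then computed by fixing an anchor particle and invoking \eqref{eq:7} to control the marginal $f_K$, combined with a Cayley-type enumeration of the connectivity trees on the $K$ particles forming a cluster; the expected number of $K$-clusters is bounded by $N(CC_0\bar\beta)^{K-1}K^{K-2}/K!$. Multiplied by the per-cluster bound, this produces, after summing $K$ from $3$ up to $\bar\beta^{-1/2}$ (the range in which \eqref{eq:7} applies), a geometric series in $C_0\bar\beta$ whose leading terms control the sum. The far-part tails are handled exactly as in \eqref{eq:88}, while the $|\ln\delta_k|$ corrections are absorbed upon integration against $f_1$. Assembling everything yields the stated bound $C\bar\beta^{5/2}$.

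The main obstacle is tracking the polynomial-in-$K$ factors arising in both the per-cluster gradient bound (the near $v_2$ double sum contains $O(K^2)$ terms each of size $O(\epsilon)$ with logarithmic boundary corrections) and the Cayley-style cluster count ($K^{K-2}/K!$). These factors, combined with the need to allow cluster sizes up to $\bar\beta^{-1/2}$, preclude a naive $\bar\beta^3$ bound analogous to the $K=1,2$ cases and are precisely what force the weaker exponent $5/2$ on $\bar\beta$.
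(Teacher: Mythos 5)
Your pathwise estimate (Step~1 of the paper's proof) essentially matches the paper's: you apply the product rule to $\zeta_j\kappa_j\{\cdots\}$, use the capacity bound \eqref{eq:23} for $\nabla\kappa_j$ and $|\nabla\zeta_j|\lesssim1/\epsilon$, and separate the bracket into the smooth $\bar\phi-(\bar\phi)_{\Ug_j^2}$ piece, the near $\phi_1,v_2$ contributions, and the far ones bounded via \eqref{eq:56} and \eqref{eq:77}. Where you genuinely diverge from the paper is in the \emph{expectation} computation, and this is where the proposal is both under-specified and slightly misleading about what drives the final exponent.

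The paper never performs a Cayley-tree enumeration of $K$-clusters. Instead it introduces the cutoff $l_K=\min(K,1/\epsilon)$ (arising because $\|\bar\phi-(\bar\phi)_{\Ug_j^2}\|_\infty\lesssim\min(K\epsilon,1)$) and then converts the sum over clusters into a sum over particles by exchangeability, splitting the resulting expectation at the threshold $M_\beta=[\beta^{-1/4}]$ (not $\bar\beta^{-1/2}$). On the event $\{1\in\Ig_3\setminus\Ig_{M_\beta}\}$ the paper deliberately replaces $l_K^2$ by the \emph{uniform} bound $M_\beta^2\leq\beta^{-1/2}$ and then needs only the third-order marginal $f_3$, via the elementary observation that membership in a $\geq 3$--cluster implies existence of two nearby particles. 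On the complementary event $\{1\in\Ig_{M_\beta}\}$ the crude bound $l_K\leq1/\epsilon$ is used, and the probability of this event is shown to be $(C_1\beta^{3/4})^{M_\beta}$ via the $M_\beta$-th marginal bound from \eqref{eq:7} and the factor $M_\beta!$. This factorial decay (which is where \eqref{eq:7} up to order $\bar\beta^{-1/2}$ is actually essential) crushes the $1/\epsilon^2$ worst-case factor under \eqref{eq:6}. No spanning trees are counted anywhere.

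Two issues with your route. First, you stop your sum at $K=\bar\beta^{-1/2}$ "(the range in which \eqref{eq:7} applies)" without saying how you dispose of larger clusters; your per-cluster estimate $K^3\epsilon^3(1+\max_k|\ln\delta_k|)$ has no a priori bound once $K$ exceeds the range of \eqref{eq:7}, so you still need a separate tail argument (the paper's $l_K\leq1/\epsilon$ plus the factorial probability bound, or the observation that a cluster of size $\geq\bar\beta^{-1/2}$ contains a connected $\bar\beta^{-1/2}$-subset and then the $(C_0e\bar\beta)^{\bar\beta^{-1/2}}$ decay). This step must be made explicit or the argument does not close. Second, your concluding remark that the Cayley factors "are precisely what force the weaker exponent $5/2$" does not reflect what happens in either route: the geometric series $N(C_0e\bar\beta)^{K-1}K^{-5/2}\cdot K^3\epsilon^3$ is dominated by $K=3$ and would in fact yield the sharper $\bar\beta^3$ for the $\bar\phi$ piece. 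The paper's exponent $5/2$ in \eqref{eq:106} comes from the deliberately crude uniform bound $l_K^2\leq\beta^{-1/2}$, traded against avoiding cluster-by-size bookkeeping. So your approach, if carried out and the tail supplied, would likely be both correct and slightly sharper, but it is genuinely different from, and heavier than, what the paper does.
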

\begin{proof}
{\em Step 1:} Estimate $\|\nabla u_j\|_2$.
\\

 By \eqref{eq:23} and the fact that $\|\nabla\bar{\phi}\|_\infty\leq C$ we have, 
\begin{displaymath}
  \|(\bar{\phi}-(\bar{\phi})_{\Ug_j^2})\nabla(\kappa_j\zeta_j)\|_2\leq
  Cl_K\epsilon^{3/2} \Big[\sum_{k=1}^K (1+|\ln \delta_{m_{jk}}|)\Big]^{1/2} \,,
\end{displaymath}
where
\begin{equation}
\label{eq:99}
  l_K=\min (K,1/\epsilon)\,.
\end{equation}
Hence, 
\begin{equation}
\label{eq:100}
\big\|\nabla\big(\kappa_j\zeta_j(\bar{\phi}-(\bar{\phi})_{\Ug_j^2})\big)\big\|_2\leq
Cl_K\epsilon^{3/2} \Big[\sum_{k=1}^K (1+|\ln \delta_{m_{jk}}|)\Big]^{1/2} \,. 
\end{equation}
We note that, had we managed to eliminate $l_K$ from \eqref{eq:100} and
the estimates below, we could have replace \eqref{eq:7} by the much
weaker assumptions on the marginal probability densities made in
\cite{al13} (that the first five marginal probability densities are
bounded). 

We now turn to estimate the $H^1$ norm of the first sum on the
right-hand-side of \eqref{eq:96}. By \eqref{eq:48}, for all $i\in\J_j$
we have,
\begin{displaymath}
  \|[\phi_1(\cdot,\eta_i)-(\phi_1(\cdot,\eta_i))_{\Ug_j^2}]\nabla(\zeta_j\kappa_j)\|_2 \leq   C\epsilon^{3/2} \Big[\sum_{k=1}^K (1+|\ln \delta_{m_{jk}}|)\Big]^{1/2} \,.
\end{displaymath}
Using the above and \eqref{eq:50} then yields  for all $i\in\J_j$
\begin{equation}
\label{eq:101}
  \|\nabla(\phi_1(\cdot,\eta_i)\zeta_j\kappa_j)\|_2 \leq   C\epsilon^{3/2} \Big[\sum_{k=1}^K (1+|\ln \delta_{m_{jk}}|)\Big]^{1/2} \,.
\end{equation}
For $i\not\in\J_j$ we have by \eqref{eq:56} that
\begin{displaymath}
  \|\phi_1(\cdot,\eta_i)-(\phi_1(\cdot,\eta_i))_{\Ug_j^2}\|_{L^\infty(\Ug_j^2)}\leq  C\frac{l_K\epsilon^4}{d(\eta_i,\Ug_j^2)^3}
\end{displaymath}
and that
\begin{displaymath}
   \|\nabla\phi_1(\cdot,\eta_i)\|_{L^2(\Ug_j^2)}\leq C\frac{K^{1/2}\epsilon^{9/2}}{d(\eta_i,\Ug_j^2)^3}
\end{displaymath}
Hence
\begin{displaymath}
  \|\nabla(\kappa_j\zeta_j[\phi_1(\cdot,\eta_i)-(\phi_1(\cdot,\eta_i))_{\Ug_j^2}])\|_2\leq
  C\frac{l_K\epsilon^{9/2}}{d(\eta_i,\Ug_j^2)^3}
  \Big[\sum_{k=1}^K (1+|\ln \delta_{m_{jk}}|)\Big]^{1/2} \,,
\end{displaymath}
which combined with \eqref{eq:101} then yields for all $1\leq i\leq N$
\begin{equation}
\label{eq:102}
  \|\nabla(\kappa_j\zeta_j[\phi_1(\cdot,\eta_i)-(\phi_1(\cdot,\eta_i))_{\Ug_j^2}])\|_2\leq
  C\frac{l_k\epsilon^{9/2}}{[d(\eta_i,\Ug_j^2)+\epsilon]^3}\Big[\sum_{k=1}^K (1+|\ln 
  \delta_{m_{jk}}|)\Big]^{1/2} \,.
\end{equation}

We now turn to the estimate of the second sum on the right-hand-side
of \eqref{eq:96} we have, when $(i,k)\in\J_j\times\J_j$, by \eqref{eq:70}
and \eqref{eq:59},
\begin{multline}
\label{eq:103}
   \|\nabla(\kappa_j\zeta_j[v_2(\cdot,\eta_i,\eta_k)
   -(v_2(\cdot,\eta_i,\eta_k))_{\Ug_j^2}])\|_2 \leq\\
   C\frac{\epsilon^{9/2}}{|\eta_i-\eta_k|^3}[1+d_{ik}]^{1/2}
   \Big[\sum_{n=1}^K (1+|\ln \delta_{m_{jn}}|)\Big]^{1/2} \,. 
\end{multline}
When either
$i\not\in\J_j$ or $k\not\in\J_j$ (or both) we have, by \eqref{eq:70} and
\eqref{eq:72} (or \eqref{eq:77}), that
\begin{multline}
\label{eq:104}
  \|\nabla(\kappa_j\zeta_j[v_2(\cdot,\eta_i,\eta_k)
  -(v_2(\cdot,\eta_i,\eta_k))_{\Ug_j^2}])\|_2\leq
  C\frac{\epsilon^{15/2}l_K}{|\eta_i-\eta_k|^3}[1+ d_{ik}]^{1/2} \\\times 
\Big[\frac{1}{[d(\eta_i,\Ug_j^2)+\epsilon]^3}+ \frac{1}{[d(\eta_k,\Ug_j^2)+\epsilon]^3}\Big]
  \Big[\sum_{n=1}^K (1+|\ln \delta_{m_{jn}}|)\Big]^{1/2} \,.
\end{multline}
Combining the above with \eqref{eq:103} reveals that \eqref{eq:104} is
valid for all $1\leq i,k\leq N$ such that $i\neq k$. The estimate of
$\|\nabla u_j\|_2$ is then derived from \eqref{eq:100}, \eqref{eq:102},
\eqref{eq:104}, and the fact that by \eqref{eq:96} we have
\begin{multline}
  \label{eq:105}
  \|\nabla u_j\|_2^2\leq3
  \bigg(\big\|\nabla\big(\kappa_j\zeta_j(\bar{\phi}-(\bar{\phi})_{\Ug_j^2})\big)\big\|_2^2 +
  \Big\|\sum_{i=1}^N\nabla(\kappa_j\zeta_j[\phi_1(\cdot,\eta_i)-(\phi_1(\cdot,\eta_i))_{\Ug_j^2}])\Big\|_2^2 + \\
  \Big\|\sum_{
    \begin{subarray}{c}
      i,k=1 \\
      i\neq N
    \end{subarray}}^N\nabla(\kappa_j\zeta_j[v_2(\cdot,\eta_i,\eta_k) -(v_2(\cdot,\eta_i,\eta_k))_{\Ug_j^2}]) \Big\|_2^2\bigg)
\end{multline}
\\

{\em Step 2:} Prove that
\begin{equation}
\label{eq:106}
  \Eb_f\Big(\sum_{j:\,|J_j|\geq3}\Big\|\nabla\Big(\zeta_j\kappa_j[\bar{\phi}-(\bar{\phi})_{\Ug_j^2}]\Big)\Big\|_2^2
\Big) \leq C\bar{\beta}^{5/2}\,.
\end{equation}
\\

Let 
 \begin{displaymath}
   S_n=\Prod_{k=1}^n\Union_{j=1}^kB(\eta_j,4\epsilon)\cap\Omega \,.
 \end{displaymath}
To prove \eqref{eq:106} we need an estimate for the expectation of the
right-hand-side of \eqref{eq:100}. We thus write
\begin{multline*}
  \Eb_f\Big(\sum_{j:|\J_j|\geq3}l_K^2\sum_{k=1}^{K(j)} (1+|\ln \delta_{m_{jk}}|\Big)=\\\sum_{k=1}^N\Eb_f\big(l_{K(k)}^2[1+|\ln
  \delta_k|]{\mathbf 1}_{k\in\Ig_3}\big)=N\Eb_f\big(l_{K(1)}^2[1+|\ln
  \delta_1|]{\mathbf 1}_{1\in\Ig_3}\big)= \\N\Eb_f\big(l_{K(1)}^2[1+|\ln
  \delta_1|]{\mathbf 1}_{1\in\Ig_3\setminus\Ig_{M_\beta}}\big)+N\Eb_f\big(l_{K(1)}^2[1+|\ln
  \delta_1|]{\mathbf 1}_{1\in\Ig_{M_\beta}}\big)\,,
\end{multline*}
where $M_\beta=[\beta^{-1/4}]$, i.e., the integer part of $\beta^{-1/4}$.  We
now use the definition of $l_K$ in
\eqref{eq:99} to obtain, with the aid of \eqref{eq:7}
\begin{multline}
\label{eq:107}
  N\Eb_f\big(l_{K(1)}^2[1+|\ln
  \delta_1|]{\mathbf 1}_{1\in\Ig_3\setminus\Ig_{M_\beta}}\big)\leq\\ N^3\beta^{-1/2}\int_{\Omega\times S_2}  (1+|\ln
  \delta_1|)f_3(\eta_1,\eta_2,\eta_3)\,d\eta_1d\eta_2d\eta_3 \leq CN\bar{\beta}^{3/2}\,.
\end{multline}
Furthermore, as
\begin{multline*}
  N\Eb_f\big(l_{K(1)}^2[1+|\ln
  \delta_1|]{\mathbf 1}_{1\in\Ig_{M_\beta}}\big) \leq\\
  \frac{C \,N!}{(N-M_\beta-1)!\epsilon^2}\int_{\Omega\times S_{M_\beta}}  (1+|\ln
  \delta_1|)f_{M_\beta+1}(\eta_1,\ldots,\eta_{M_\beta+1})\,d\eta_1\cdots d\eta_{M_\beta+1}
\end{multline*}
we obtain from \eqref{eq:7}  that
\begin{equation}
\label{eq:108}
  N\Eb_f\big(l_{K(1)}^2[1+|\ln
  \delta_1|]{\mathbf 1}_{1\in\Ig_{M_\beta}}\big) \leq
  C\frac{M_\beta!}{\epsilon^2}(C_1\beta)^{M_\beta}\leq \frac{C}{\epsilon^2}(C_1\beta^{3/4})^{M_\beta}\,,
\end{equation}
where $C_1<C_0/e$ and $C_0$ is the same as in \eqref{eq:7}.  We may
now conclude \eqref{eq:106} from the above, \eqref{eq:6}, and
\eqref{eq:107}. 
\\

{\em Step 3:} Prove that 
\begin{equation}
  \label{eq:109}
 \Eb_f\Big(\sum_{j:\,|J_j|\geq3}\Big\|\sum_{i=1}^N\nabla(\kappa_j\zeta_j[\phi_1(\cdot,\eta_i)-C_i^j])\Big\|_2^2\Big) \leq C\bar{\beta}^{11/4}\,.
\end{equation}
\\

We first observe that
\begin{multline}
\label{eq:110}
  \Eb_f\Big(\sum_{j:\,|J_j|\geq3}\Big\|\sum_{i=1}^N\nabla(\kappa_j\zeta_j[\phi_1(\cdot,\eta_i)-C_i^j])\Big\|_2^2\Big) \\ =   
2N\Eb_f\Big(\sum_{j:\,|J_j|\geq3}\Big\|\nabla(\kappa_j\zeta_j[\phi_1(\cdot,\eta_N)-C_N^j])\Big\|_2^2\Big)
+
\\2N(N-1)\Eb_f\Big(\sum_{j:\,|J_j|\geq3}\Big\langle\nabla(\kappa_j\zeta_j[\phi_1(\cdot,\eta_N)-C_N^j]),\nabla(\kappa_j\zeta_j[\phi_1(\cdot,\eta_{N-1})-C_{N-1}^j])\Big\rangle
\Big)
\end{multline}
By \eqref{eq:102} we then have
\begin{multline}
\label{eq:111}
  N\Eb_f\Big(\sum_{j:\,|J_j|\geq3}\Big\|\nabla(\kappa_j\zeta_j[\phi_1(\cdot,\eta_N)-C_N^j])\Big\|_2^2\Big)
  \\ \leq C\bar{\beta}^2\epsilon^3\Eb_f\Big([1+|\ln
  \delta_1|]{\mathbf 1}_{1\in\Ig_3}\frac{l_{K(1)}^2}{[d(\eta_N,\Ug_1^2)+\epsilon]^6}\Big)
  \,,
\end{multline}
where $l_K$ is given by \eqref{eq:99}  and $\Ug_1^2$ is given by
\eqref{eq:80} (with $j=1$). We now
write
\begin{multline*}
 \Eb_f\Big([1+|\ln \delta_1|]{\mathbf 1}_{1\in\Ig_3}\frac{l_K^2}{[d(\eta_N,\Ug_1^2)+\epsilon]^6}\Big) \\ =
   \Eb_f\Big([1+|\ln
   \delta_1|]{\mathbf 1}_{1\in\Ig_3\setminus\Ig_{M_\beta}}\frac{l_K^2}{[d(\eta_N,\Ug_1^2)+\epsilon]^6}\Big) \\+ 
  \Eb_f\Big([1+|\ln
  \delta_1|]{\mathbf 1}_{1\in\Ig_{M_\beta}}\frac{l_K^2}{[d(\eta_N,\Ug_1^2)+\epsilon]^6}\Big) 
\end{multline*}
For the first term on the right-hand-side we have  
\begin{multline}
\label{eq:112}
   \Eb_f\Big([1+|\ln
   \delta_1|]{\mathbf 1}_{1\in\Ig_3\setminus\Ig_{M_\beta}}\frac{l_K^2}{[d(\eta_N,\Ug_1^2)+\epsilon]^6}\Big) \leq\\
   \frac{C}{\beta^{1/2}}N^2\Big[ \int_{\Omega\times S_2\times B(\eta_1,5\beta^{-1/4}\epsilon)}\frac{[1+|\ln
   \delta_1|]}{\epsilon^6}f_4(\eta_1,\eta_2,\eta_3,\eta_4) \, d\eta_1\cdots d\eta_4 + \\ \int_{\Omega\times S_2\times \Omega\setminus B(\eta_1,5\beta^{-1/4}\epsilon)}\frac{[1+|\ln
   \delta_1|]}{[|\eta_4-\eta_1|-\beta^{-1/4}\epsilon]^6}f_4(\eta_1,\eta_2,\eta_3,\eta_4) \, d\eta_1\cdots d\eta_4
 \Big] \leq C\frac{\bar{\beta}^{3/4}}{\epsilon^3} \,. 
\end{multline}
Note that ${\rm Diam}(\Ug_1^2)\leq4\beta^{-1/4}\epsilon$ whenever
$1\in\Ig_3\setminus\Ig_{M_\beta}$.  Furthermore,
\begin{displaymath}
    \Eb_f\Big([1+|\ln \delta_1|]{\mathbf
      1}_{1\in\Ig_{M_\beta}}\frac{l_K^2}{[d(\eta_N,\Ug_1^2)+\epsilon]^6}\Big) \leq
\frac{C}{\epsilon^6} \Eb_f\Big([1+|\ln \delta_1|]{\mathbf
      1}_{1\in\Ig_{M_\beta}}l_K^2\Big)\,,
  \end{displaymath}
and hence by \eqref{eq:108} we obtain that
\begin{equation}
  \label{eq:113}
\Eb_f\Big([1+|\ln \delta_1|]{\mathbf
      1}_{1\in\Ig_{M_\beta}}\frac{l_K^2}{[d(\eta_N,\Ug_1^2)+\epsilon]^6}\Big) \leq \frac{C}{\epsilon^8}(C_1\beta^{3/4})^{M_\beta}\,.
\end{equation}
Combining the above with \eqref{eq:112} and \eqref{eq:111} yields, in
view of \eqref{eq:6}, for sufficiently small $\bar{\beta}$
\begin{equation}
  \label{eq:114}
N\Eb_f\Big(\sum_{j:\,|J_j|\geq3}\Big\|\nabla(\kappa_j\zeta_j[\phi_1(\cdot,\eta_N)-C_N^j])\Big\|_2^2\Big)
\leq  C\bar{\beta}^{11/4}\,.
\end{equation}

To estimate the second term on the right-hand-side of \eqref{eq:110}
we first note that by \eqref{eq:102}
\begin{multline*}
  \Big\langle\nabla(\kappa_j\zeta_j[\phi_1(\cdot,\eta_N)-C_N^j]),\nabla(\kappa_j\zeta_j[\phi_1(\cdot,\eta_{N-1})-C_{N-1}^j])\Big\rangle\leq  \\ 
\frac{Cl_K^2\epsilon^9}{[d(\eta_N,\Ug_j^2)+\epsilon]^3[d(\eta_{N-1},\Ug_j^2)+\epsilon]^3}\Big[\sum_{m=1}^K (1+|\ln 
  \delta_{m_{jk}}|)\Big] \,.
\end{multline*}
Hence,
\begin{multline*}
  N^2\Eb_f\Big(\Big\langle\nabla(\kappa_j\zeta_j[\phi_1(\cdot,\eta_N)-C_N^j]),\nabla(\kappa_j\zeta_j[\phi_1(\cdot,\eta_{N-1})-C_{N-1}^j])\Big\rangle\Big) \leq \\
C\bar{\beta}^3\Eb_f\Big([1+|\ln
  \delta_1|]{\mathbf 1}_{1\in\Ig_3}\frac{l_K}{[d(\eta_N,\Ug_1^2)+\epsilon]^3}\frac{l_K}{[d(\eta_{N-1},\Ug_1^2)+\epsilon]^3}\Big)
\end{multline*}
Then we write, as before,
\begin{multline}
\label{eq:115}
  \Eb_f\Big([1+|\ln
  \delta_1|]{\mathbf
    1}_{1\in\Ig_3}\frac{l_K}{[d(\eta_N,\Ug_1^2)+\epsilon]^3}\frac{l_K}{[d(\eta_{N-1},\Ug_1^2)+\epsilon]^3}\Big) = \\ \Eb_f\Big([1+|\ln
  \delta_1|]{\mathbf
    1}_{1\in\Ig_3\setminus\Ig_{M_\beta}}\frac{l_K}{[d(\eta_N,\Ug_1^2)+\epsilon]^3}\frac{l_K}{[d(\eta_{N-1},\Ug_1^2)+\epsilon]^3}\Big) + \\ 
\Eb_f\Big([1+|\ln
  \delta_1|]{\mathbf
    1}_{1\in\Ig_{M_\beta}} \frac{l_K}{[d(\eta_N,\Ug_1^2)+\epsilon]^3}\frac{l_K}{[d(\eta_{N-1},\Ug_1^2)+\epsilon]^3}\Big)\,.
\end{multline}
For the first term on the right-hand-side we have, by \eqref{eq:6} and
\eqref{eq:7},
\begin{multline}
\label{eq:116}
  \Eb_f\Big([1+|\ln
  \delta_1|]{\mathbf
    1}_{1\in\Ig_3\setminus\Ig_{M_\beta}}\frac{l_K}{[d(\eta_N,\Ug_1^2)+\epsilon]^3}\frac{l_K}{[d(\eta_{N-1},\Ug_1^2)+\epsilon]^3}\Big) \leq \\
\frac{C}{\beta^{1/2}}N^2\Big[ \int_{\Omega\times S_2\times [B(\eta_1,5\beta^{-1/4}\epsilon)]^2}\frac{[1+|\ln
   \delta_1|]}{\epsilon^6}f_5(\eta_1,\ldots,\eta_5) \, d\eta_1\cdots d\eta_5 + \\ \int_{\Omega\times S_2\times [\Omega\setminus B(\eta_1,5\beta^{-1/4}\epsilon)]^2}\frac{[1+|\ln
   \delta_1|]}{[|\eta_4-\eta_1|-\beta^{-1/4}\epsilon]^3[|\eta_5-\eta_1|-\beta^{-1/4}\epsilon]^3}f_5(\eta_1,\ldots,\eta_5) \, d\eta_1\cdots
 d\eta_5 + \\ 
2 \int_{\Omega\times S_2\times B(\eta_1,5\beta^{-1/4}\epsilon)\times\Omega\setminus B(\eta_1,5\beta^{-1/4}\epsilon)}\frac{[1+|\ln
   \delta_1|]}{[|\eta_4-\eta_1|-\beta^{-1/4}\epsilon]^3\epsilon^3}f_5(\eta_1,\ldots,\eta_5) \, d\eta_1\cdots
 d\eta_5 
 \Big] \\\leq C\,. 
\end{multline}
The second term on the right-hand-side of \eqref{eq:115} can be
bounded as in \eqref{eq:113}
\begin{displaymath}
  \Eb_f\Big([1+|\ln
  \delta_1|]{\mathbf
    1}_{1\in\Ig_{M_\beta}}
  \frac{l_K}{[d(\eta_N,\Ug_1^2)+\epsilon]^3}\frac{l_K}{[d(\eta_{N-1},\Ug_1^2)+\epsilon]^3}\Big)\leq \frac{C}{\epsilon^8}(C_1\beta^{3/4})^{M_\beta}\,.
\end{displaymath}
Combining the above with \eqref{eq:116}, \eqref{eq:115},
\eqref{eq:114}, and \eqref{eq:110} yields \eqref{eq:109}.
\\

{\em Step 4:} Prove that
\begin{equation}
\label{eq:117}
   \Eb_f\Big(\sum_{j:\,|J_j|\geq3}\Big\|\sum_{
    \begin{subarray}{c}
      i,k=1 \\
      i\neq k
    \end{subarray}}^N\nabla(\kappa_j\zeta_j[v_2(\cdot,\eta_i,\eta_k)
   -(v_2(\cdot,\eta_i,\eta_k))_{\Ug_j^2}])\Big\|_2^2\Big)\leq C\bar{\beta}^{15/4}\,.
\end{equation}
\\

 We begin by writing (cf. \cite[Eq. (3.6)-(3.9)]{al13})
\begin{multline}
\label{eq:118}
  \Eb_f\Big(\sum_{j:\,|J_j|\geq3}\Big\|\sum_{
    \begin{subarray}{c}
      i,k=1 \\
      i\neq k
    \end{subarray}}^N\nabla(\kappa_j\zeta_j[v_2(\cdot,\eta_i,\eta_k)
   -C_{ik}^j])\Big\|_2^2\Big)\leq\\
   2N^2\Eb_f\Big(\sum_{j:\,|J_j|\geq3}\| \nabla(\kappa_j\zeta_j[v_2(\cdot,\eta_N,\eta_{N-1})
   -C_{N,N-1}^j)\|_2^2\Big)+ \\4N^3\Big|\Eb_f\Big(\sum_{j:\,|J_j|\geq3}\langle\nabla(\kappa_j\zeta_j[v_2(\cdot,\eta_N,\eta_{N-1})
   -C_{N,N-1}^j]),\nabla(\kappa_j\zeta_j[v_2(\cdot,\eta_N,\eta_{N-2})
   -C_{N,N-2}^j]\rangle\Big)\Big|+\\N^4\Big|\Eb_f\Big(\sum_{j:\,|J_j|\geq3}\langle\nabla(\kappa_j\zeta_j[v_2(\cdot,\eta_N,\eta_{N-1})
   -C_{N,N-1}^j]),\nabla(\kappa_j\zeta_j[v_2(\cdot,\eta_N,\eta_{N-2})
   -C_{N-2,N-3}^j]\rangle\Big)\Big|
\end{multline}
By \eqref{eq:104} we then obtain that
\begin{multline}
\label{eq:119}
  2N^2\Eb_f\Big(\sum_{j:\,|J_j|\geq3}\|\nabla(\kappa_j\zeta_j[v_2(\cdot,\eta_N,\eta_{N-1})
   -C_{N,N-1}^j])\|_2^2\Big) \leq
   \\ C\bar{\beta}^3\epsilon^6\Eb_f\bigg([1+|\ln \delta_1|]{\mathbf
    1}_{1\in\Ig_3}\Big[ \frac{1}{[d(\eta_4,\Ug_1^2)+\epsilon]^6}+ \frac{1}{[d(\eta_5,\Ug_1^2)+\epsilon]^6}\Big]\frac{1+d_{45}}{|\eta_4-\eta_5|^6}\bigg) \,.
\end{multline}
For convenience we have replaced in the above the indices $N$ and
$N-1$ by $4$ and $5$ (the statement remains accurate as all inclusions
are identical). We shall apply a similar change of indices in the sequel
without referring to that explicitly. We now write as above, using the
symmetry of $f_N$,
\begin{multline*}
  \Eb_f\bigg([1+|\ln \delta_1|]{\mathbf
    1}_{1\in\Ig_3}\Big[ \frac{1}{[d(\eta_4,\Ug_1^2)+\epsilon]^6}+ \frac{1}{[d(\eta_5,\Ug_1^2)+\epsilon]^6}\Big]\frac{1+d_{45}}{|\eta_4-\eta_5|^6}\bigg) \leq \\
 2\Eb_f\bigg([1+|\ln \delta_1|]{\mathbf
    1}_{1\in\Ig_3\setminus\Ig_{M_\beta}}\frac{1}{[d(\eta_4,\Ug_1^2)+\epsilon]^6}\frac{1+d_{45}}{|\eta_4-\eta_5|^6}\bigg) + \\
 \Eb_f\bigg([1+|\ln \delta_1|]{\mathbf
    1}_{1\in\Ig_{M_\beta}}\frac{1}{[d(\eta_4,\Ug_1^2)+\epsilon]^6}\frac{1+d_{45}}{|\eta_4-\eta_5|^6}\bigg) \,.
\end{multline*}
For the first term we have
\begin{multline}
\label{eq:120}
\Eb_f\bigg([1+|\ln \delta_1|]{\mathbf
  1}_{1\in\Ig_3\setminus\Ig_{M_\beta}}l_K^2\frac{1}{[d(\eta_4,\Ug_1^2)+\epsilon]^6}\frac{1+d_{45}}{|\eta_4-\eta_5|^6}\bigg)
\leq \\\frac{C}{\bar{\beta}^{1/2}}N^2\Big[ \int_{\Omega\times S_2\times
  B(\eta_1,5\beta^{-1/4}\epsilon)\cap\Omega\times\Omega}\frac{[1+|\ln
  \delta_1|]}{\epsilon^6}\frac{1+d_{45}}{|\eta_4-\eta_5|^6}f_5(\eta_1,\ldots,\eta_5) \,
d\eta_1\cdots d\eta_5 + \\ \int_{\Omega\times S_2\times \Omega\setminus B(\eta_1,5\beta^{-1/4}\epsilon)\times\Omega}[1+|\ln
\delta_1|]\frac{1}{[|\eta_4-\eta_1|-\beta^{-1/4}\epsilon]^6}\frac{1+d_{45}}{|\eta_4-\eta_5|^6}
f_5 \, d\eta_1\cdots d\eta_5 \\\leq C\frac{\bar{\beta}^{3/4}|\ln \epsilon|}{\epsilon^6}\,.
\end{multline}
For the second term we have, as in \eqref{eq:108}, 
\begin{multline*}
   \Eb_f\bigg([1+|\ln \delta_1|]{\mathbf
    1}_{1\in\Ig_{M_\beta}}\frac{1}{[d(\eta_4,\Ug_1^2)+\epsilon]^6}\frac{1+d_{45}}{|\eta_4-\eta_5|^6}\bigg) \leq \\
\frac{C}{\epsilon^{12}}\Eb_f\bigg([1+|\ln \delta_1|]{\mathbf
    1}_{1\in\Ig_{M_\beta}}(1+d_{45})\bigg) \leq
\frac{C}{\epsilon^7|\ln\epsilon|}(C_1\beta^{3/4})^{M_\beta-1}\,.
\end{multline*}
Combining the above with \eqref{eq:120} yields, in view of
\eqref{eq:6}, for sufficiently small $\beta$,
\begin{displaymath}
  \Eb_f\bigg([1+|\ln \delta_1|]{\mathbf
    1}_{1\in\Ig_3}\frac{1}{[d(\eta_4,\Ug_1^2)+\epsilon]^6}\frac{1+d_{45}}{|\eta_4-\eta_5|^6}\bigg) \leq C\frac{\bar{\beta}^{3/4}}{\epsilon^6} 
\end{displaymath}
Hence, by \eqref{eq:119} we have
\begin{equation}
\label{eq:121}
  2N^2\Eb_f\Big(\sum_{j:\,|J_j|\geq3}\|\nabla(\kappa_j\zeta_j[v_2(\cdot,\eta_N,\eta_{N-1})
   -C_{N,N-1}^j]\|_2^2\Big) \leq C \bar{\beta}^{15/4}\,.
\end{equation}

For the second term on the right-hand-side of \eqref{eq:118} we have
\begin{multline}
  N^3\Big|\Eb_f\Big(\sum_{j:\,|J_j|\geq3}\langle\nabla(\kappa_j\zeta_j[v_2(\cdot,\eta_N,\eta_{N-1})
   -C_{N,N-1}^j]),\nabla(\kappa_j\zeta_j[v_2(\cdot,\eta_N,\eta_{N-2})
   -C_{N,N-2}^j]\rangle\Big)\Big| \leq \\ C\bar{\beta}^4\epsilon^3\Eb_f\bigg(l_K^2[1+|\ln \delta_1|]{\mathbf
    1}_{1\in\Ig_3}\frac{1}{[d(\eta_4,\Ug_1^2)+\epsilon]^6}\frac{1+d_{45} + d_{46}}{|\eta_4-\eta_5|^3|\eta_4-\eta_6|^3}\bigg) 
\end{multline}
Following precisely the same steps as in the derivation of
\eqref{eq:121} leads to
\begin{multline}
\label{eq:122}
  N^3\Big|\Eb_f\Big(\sum_{j:\,|J_j|\geq3}\langle\nabla(\kappa_j\zeta_j[v_2(\cdot,\eta_N,\eta_{N-1})
   -C_{N,N-1}^j]),\nabla(\kappa_j\zeta_j[v_2(\cdot,\eta_N,\eta_{N-2})
   -C_{N,N-2}^j]\rangle\Big)\Big| \\\leq C\bar{\beta}^{19/4}\ln^2\epsilon\leq C \bar{\beta}^{17/4}\,.
\end{multline}
Finally, in a similar manner, we obtain that
\begin{multline}
  \label{eq:123}
N^4\Big|\Eb_f\Big(\sum_{j:\,|J_j|\geq3}\langle\nabla(\kappa_j\zeta_j[v_2(\cdot,\eta_N,\eta_{N-1})
   -C_{N,N-1}^j]),\nabla(\kappa_j\zeta_j[v_2(\cdot,\eta_N,\eta_{N-2})
   -C_{N-2,N-3}^j]\rangle\Big)\Big|\\\leq C( \bar{\beta}^{13/2}\ln^4\epsilon +
   \bar{\beta}^5\ln^2\epsilon)\leq  C \bar{\beta}^{9/2}\,.
\end{multline}
Combining the above with \eqref{eq:122} and \eqref{eq:119} yields \eqref{eq:117}.

The lemma now follows from \eqref{eq:105}, \eqref{eq:117},
\eqref{eq:109}, and \eqref{eq:106}. 
 \end{proof}

We may now combine \eqref{eq:86}, \eqref{eq:92}, and \eqref{eq:98}
into the following statement
\begin{proposition}
   Under the assumptions of Theorem \ref{thm:1} there exists $C(\Omega)>0$ such that
     \begin{equation}
 \label{eq:124}
\Eb_f\Big( \|\nabla u\|_2^2\Big) \leq C\bar{\beta}^{5/2} \,.
  \end{equation}
\end{proposition}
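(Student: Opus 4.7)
The proof is essentially an assembly of the three preceding lemmas, using the variational characterization of $u$. Here is the plan.

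First, recall from the discussion around \eqref{eq:81}–\eqref{eq:82} that $u|_{\Omega\setminus\Union_n B_n}$ is the minimizer of $I_N$ over $Y_N$. I would verify that the candidate $\tilde u=\sum_{j=1}^M u_j$ with $u_j$ constructed case-by-case in \eqref{eq:83}, \eqref{eq:90}, \eqref{eq:96} lies in $Y_N$: the first two constructions are explicitly supported in a neighborhood of $\Ug_j^2$ and their trace on each $B_n$ matches $C_n - \tfrac12\sum v_2(\cdot,\eta_k,\eta_m)$ modulo an additive constant (by \eqref{eq:85}, \eqref{eq:91}, \eqref{eq:97}), while in the third case the capacity potential $\kappa_j$ handles the boundary values on the whole cluster simultaneously. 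Thus $\tilde u\in Y_N$, so minimality gives
\begin{equation*}
\lambda\|\nabla u\|_{2}^{2}\leq I_N(u)\leq I_N(\tilde u)\leq \Lambda\|\nabla\tilde u\|_{2}^{2}.
\end{equation*}

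Second, since the supports $\Ug_j^2$ associated to distinct clusters $\J_j$ are pairwise disjoint (by the defining property of the clusters $\min_{(m,n)\in\J_k\times\J_j}|\eta_m-\eta_n|>4\epsilon$ for $k\neq j$), the $\nabla u_j$ are also mutually disjointly supported, so
\begin{equation*}
\|\nabla\tilde u\|_2^2=\sum_{j=1}^M\|\nabla u_j\|_2^2.
\end{equation*}
Taking $\Eb_f$ and splitting the cluster index $j$ according to whether $|\J_j|=1$, $|\J_j|=2$, or $|\J_j|\geq3$, I apply \eqref{eq:86}, \eqref{eq:92} and \eqref{eq:98} respectively to obtain
\begin{equation*}
\Eb_f\big(\|\nabla u\|_2^2\big)\leq C\Eb_f\Big(\sum_{j=1}^M\|\nabla u_j\|_2^2\Big)\leq C\bar\beta^3+C\bar\beta^3+C\bar\beta^{5/2}\leq C\bar\beta^{5/2}.
\end{equation*}

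No serious obstacle remains in this step: the work has already been done in the three lemmas, and the present proposition merely collects them. The only point that needs a line of care is checking that $\tilde u$ really is admissible in $Y_N$—in particular that on each $B_n$ the trace picks out exactly the required combination of $v_2$ boundary values. This follows from \eqref{eq:85}, \eqref{eq:91}, \eqref{eq:97} together with the observation that each index $n\in\{1,\ldots,N\}$ belongs to exactly one cluster $\J_{j(n)}$, so only $u_{j(n)}$ contributes to the boundary value on $B_n$, while all other $u_j$ vanish there by the support property of $\zeta_j$ (or $\chi$ in the $|\J_j|=1$ case). Once this is verified, the bound \eqref{eq:124} is immediate, with the dominant contribution $\bar\beta^{5/2}$ coming from the cluster sizes $|\J_j|\geq 3$, which is precisely where assumption \eqref{eq:7} on the higher marginals is exploited.
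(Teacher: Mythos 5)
Your proposal follows the paper's own argument: once \eqref{eq:86}, \eqref{eq:92}, and \eqref{eq:98} are in hand, the proposition is obtained by the variational comparison $\|a^{1/2}\nabla u\|\leq\|a^{1/2}\nabla\tilde u\|$ together with the disjointness of the supports $\Ug_j^2$, so the approach is the same and the conclusion is correct. One small point to tighten: since $I_N$ in \eqref{eq:15} integrates only over $\Omega\setminus\Union_n B_n$, the chain $\lambda\|\nabla u\|_2^2\leq I_N(u)$ as written is not valid; minimality only controls $\|\nabla u\|_{2,o}^2$. To pass from $\|\cdot\|_{2,o}$ to the full $L^2(\Omega)$ norm you need exactly the matching property \eqref{eq:82}, which forces $\nabla\tilde u=\nabla u_j=\nabla u$ inside each $B_k$, so that the interior contribution is also dominated by $\sum_j\|\nabla u_j\|_2^2$; this is precisely what the paper observes before stating the proposition.
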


We can now establish 
\begin{proposition}
  Let $\phi$ denote the (weak) solution of \eqref{eq:1}. Then, under the
  assumptions \eqref{eq:6} and \eqref{eq:9} we have that
  \begin{equation}
\label{eq:125}
    \| \Eb_f(\phi) - \bar{\phi}- N\Eb_f(\phi_1)\|_{1,2} \leq C\bar{\beta}^{5/2} \,.
  \end{equation}
\end{proposition}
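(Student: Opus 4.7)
The plan is to take $\Eb_f$ of the decomposition \eqref{eq:78} and exploit the permutation invariance of $f_N$ to collapse the sums. All $N$ single-particle contributions $\Eb_f(\phi_1(\cdot,\eta_i))$ coincide, and all $N(N-1)$ pair contributions $\Eb_f(v_2(\cdot,\eta_i,\eta_j))$ coincide, so
\begin{equation*}
\Eb_f(\phi) - \bar{\phi} - N\Eb_f(\phi_1(\cdot,\eta_1)) = \tfrac{N(N-1)}{2}\Eb_f\big(v_2(\cdot,\eta_1,\eta_2)\big) + \Eb_f(u).
\end{equation*}
It is therefore enough to estimate the $W^{1,2}$ norms of the residual $\Eb_f(u)$ and the pair term $\tfrac{N(N-1)}{2}\Eb_f(v_2)$.

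For the residual I plan a pointwise Jensen inequality: $|\nabla\Eb_f(u)(x)|^2 \leq \Eb_f(|\nabla u(x)|^2)$. Integrating over $\Omega$ and using \eqref{eq:124} gives $\|\nabla\Eb_f(u)\|_2^2 \leq \Eb_f(\|\nabla u\|_2^2) \leq C\bar{\beta}^{5/2}$. Since $u|_{\partial\Omega}=0$ implies $\Eb_f(u)|_{\partial\Omega}=0$, the Poincar\'e inequality upgrades this to $\|\Eb_f(u)\|_{1,2}^2 \leq C\bar{\beta}^{5/2}$, which is the right order.

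For the pair term I would split the $(\eta_1,\eta_2)$-integration according to whether $|\eta_1-\eta_2|\geq 4\epsilon$ (the generic regime), where \eqref{eq:77} provides the sharp pointwise dipole decay $|\nabla_x v_2|\leq C\epsilon^6|\eta_1-\eta_2|^{-3}\big[|x-\eta_1|^{-3}+|x-\eta_2|^{-3}\big]$, or $|\eta_1-\eta_2|<4\epsilon$ (clustering), where the localized \eqref{eq:72} applies on a set of pair-measure $O(\epsilon^3)$. Combining these with $\|f_2\|_\infty\leq C_0^2$ from \eqref{eq:7}, the main quantities to control are integrals of the form $\int_{\Omega^2}\epsilon^6|\eta_1-\eta_2|^{-3}|x-\eta_1|^{-3}\,d\eta_1\,d\eta_2$, which are of order $\epsilon^6|\ln\epsilon|^2$; multiplying by $N^2\sim\bar{\beta}^2\epsilon^{-6}$ produces $\bar{\beta}^2|\ln\epsilon|^2$, and the $|\ln\epsilon|^{-4}$ gain in \eqref{eq:6} absorbs the logarithmic losses to yield a remainder of the desired order.

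The main obstacle is precisely the pair term: a naive Minkowski estimate using only the $L^2$ bound \eqref{eq:59} produces $N^2\epsilon^{9/2}|\ln\epsilon|^{1/2}\sim\bar{\beta}^2\epsilon^{-3/2}|\ln\epsilon|^{1/2}$, which is not summable as $\epsilon\to 0$ for fixed $\bar\beta$. One therefore \emph{must} pass through the finer pointwise dipole bound \eqref{eq:77} and use the gain $\bar{\beta}\leq C/\ln^4\epsilon^{-1}$ from the dilute regime \eqref{eq:6}; this is where the assumption \eqref{eq:6} is genuinely exploited.
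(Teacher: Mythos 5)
Your proposal is correct and follows essentially the same route as the paper: take the expectation of the decomposition \eqref{eq:78} and use permutation symmetry, handle $\Eb_f(u)$ by Jensen together with \eqref{eq:124} and Poincar\'e, and control the pair term via the dipole decay of $v_2$ combined with the $\ln^{-4}\epsilon^{-1}$ gain from \eqref{eq:6}. The only cosmetic difference is that you invoke the pointwise bound \eqref{eq:77} directly (with a near/far split in $|\eta_1-\eta_2|$), whereas the paper uses its localized $L^2$ consequence \eqref{eq:72} on balls $B(z,\epsilon)$; the two are interchangeable here and lead to the same $\bar{\beta}^2\ln^2\epsilon$ estimate for the pair contribution.
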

\begin{proof}
  Let 
\begin{displaymath}
  V_2(x,\eta_1,\ldots,\eta_N)= \sum_{i=1}^N \sum_{
  \begin{subarray}{2}
    j=1 \\
    j\neq i
  \end{subarray}}^N v_2(x,\eta_i,\eta_j)\,.
\end{displaymath} 
Clearly,
\begin{displaymath}
  \|\nabla\Eb_f(V_2 )\|_2^2 =
\|N(N-1)\Eb_f\big(\nabla v_2\big)\|_2^2  \,.
\end{displaymath}
With the aid of \eqref{eq:72} we then obtain for any $z\in\Omega$
\begin{displaymath}
  \|\nabla\Eb_f(V_2 )\|_{L^2(B(z,\epsilon)\cap\Omega}^2\leq CN^4\Eb_f\big(\|\nabla v_2
  \|_{L^2(B(z,\epsilon)\cap\Omega}\big)^2\leq C\beta^4\epsilon^3\ln^4\epsilon\,.
\end{displaymath}
From which we readily obtain that
\begin{equation}
\label{eq:126}
  \|\nabla\Eb_f(V_2 )\|_2\leq C\beta^2\ln^2\epsilon\,.
\end{equation}

By \eqref{eq:124} we have
\begin{displaymath}
  \|\nabla\Eb_f(u)\|_2^2 \leq  \Eb_f(\|\nabla u\|_2^2)\leq C\bar{\beta}^{5/2}\,.
\end{displaymath}
The proposition now follows from the above, \eqref{eq:126}, 
\eqref{eq:78}, and Poincare inequality. 
\end{proof}

\section{Effective medium}
\label{sec:4}

To prove Theorem \ref{thm:1} we need to show that the estimate of $\Eb_f(\phi)$
provided by \eqref{eq:125} is a good approximation for the
solution of the steady-state heat equation in a continuous medium
whose conductivity is a function of both the  conductivity $a(x)$ 
and the volume fraction $\beta(x)$. Consider then the following problem
\begin{subequations}
 \label{eq:127}
 \begin{equation}
  \begin{cases}
   -\nabla\cdot(a_e(x)\nabla\phi_e) = 0 & \text{in } \Omega \\
   \phi=f & \text{on } \partial\Omega\,,
 \end{cases} 
\end{equation}
where 
\begin{equation}
  a_e(x)= a(x)[1 + \gamma(x)] \,,
\end{equation}
in which 
\begin{equation}
\|\gamma\|_\infty  \leq C\bar{\beta} \,,  
\end{equation} 
and
\begin{equation}
  a_e>\frac{1}{2} \,,
\end{equation} 
for all $x$ in $\Omega$.
\end{subequations}

For the solution of \eqref{eq:127} we prove  the following estimate
\begin{lemma}
  Let $\phi_e$ denote the unique solution of \eqref{eq:127}. Then
  \begin{equation}
    \label{eq:128}
\|\phi_e- \bar{\phi}+ \LL^{-1}\big(\nabla\cdot(a\gamma\nabla\bar{\phi})\big)\|_{1,2} \leq C
\bar{\beta}^2 \,.
  \end{equation}
In the above $\LL^{-1}$ denotes the inverse of $\LL$ in
$H^1_0(\Omega)$, i.e., for any $F\in H^{-1}(\Omega)$, $w=\LL^{-1}F$ is the unique
(weak) solution of
\begin{displaymath}
  \begin{cases}
    \LL w = F & \text{in } \Omega \\
    w=0 & \text{on } \partial\Omega 
  \end{cases}\,.
\end{displaymath}
\end{lemma}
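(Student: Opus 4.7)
The plan is a first-order perturbation argument for the uniformly elliptic operator $\LL$. Set $\psi := \phi_e - \bar{\phi}\in H^1_0(\Omega)$. Subtracting the equations satisfied by $\phi_e$ and $\bar{\phi}$, and using $\LL\bar{\phi}=0$, one finds
\begin{equation*}
  \LL\psi \;=\; \nabla\cdot(a\gamma\nabla\bar{\phi}) + \nabla\cdot(a\gamma\nabla\psi),\qquad \psi|_{\partial\Omega}=0,
\end{equation*}
so that $\psi$ solves an elliptic problem with a small coefficient perturbation of order $\bar{\beta}$ and a forcing term of order $\bar{\beta}$ as well (since $\bar{\phi}\in C^{2,\alpha}(\bar{\Omega})$ gives $\|\nabla\bar{\phi}\|_\infty\le C$ and $\|\gamma\|_\infty\le C\bar{\beta}$).

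First I would establish the crude bound $\|\psi\|_{1,2}\le C\bar{\beta}$. Testing the above identity against $\psi$ and integrating by parts yields
\begin{equation*}
  \int_\Omega a|\nabla\psi|^2\,dx \;=\; -\!\int_\Omega a\gamma\nabla\bar{\phi}\cdot\nabla\psi\,dx \;-\;\int_\Omega a\gamma|\nabla\psi|^2\,dx.
\end{equation*}
Using \eqref{eq:127}(c,d), the last term is absorbed into the left-hand side for $\bar{\beta}$ small, and Cauchy--Schwarz together with $\|\gamma\nabla\bar{\phi}\|_2\le C\bar{\beta}$ and Poincar\'e's inequality yields $\|\psi\|_{1,2}\le C\bar{\beta}$.

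Next I would subtract the leading-order term. Define $\psi_1:=\LL^{-1}\!\big(\nabla\cdot(a\gamma\nabla\bar{\phi})\big)\in H^1_0(\Omega)$, which by the standard $H^1_0$ energy estimate satisfies $\|\psi_1\|_{1,2}\le C\|a\gamma\nabla\bar{\phi}\|_2\le C\bar{\beta}$. Letting $r:=\psi \mp \psi_1$ (with the sign dictated by \eqref{eq:128}; my derivation suggests $r=\psi-\psi_1$ but this is a harmless bookkeeping point), $r\in H^1_0(\Omega)$ solves
\begin{equation*}
  \LL r - \nabla\cdot(a\gamma\nabla r) \;=\; \nabla\cdot(a\gamma\nabla\psi_1),
\end{equation*}
which is again of the same structural form, but now with a forcing that is already of size $\bar{\beta}\cdot\|\nabla\psi_1\|_2 \le C\bar{\beta}^2$. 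Testing against $r$ and repeating the absorption argument gives
\begin{equation*}
  \lambda\|\nabla r\|_2^2 \;\le\; C\|\gamma\|_\infty\|\nabla\psi_1\|_2\|\nabla r\|_2 \;\le\; C\bar{\beta}^2\|\nabla r\|_2,
\end{equation*}
and Poincar\'e then yields $\|r\|_{1,2}\le C\bar{\beta}^2$, which is \eqref{eq:128}.

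I do not anticipate any real obstacle: the argument is entirely a two-step energy estimate for a uniformly elliptic operator on a smooth bounded domain, and the smallness condition \eqref{eq:127}(c) is precisely what is needed so that the perturbation term $\nabla\cdot(a\gamma\nabla\,\cdot\,)$ can be absorbed into the principal part. The only minor care point is to verify the sign convention of $\LL^{-1}$ in \eqref{eq:128} so that the remainder equation for $r$ really has a forcing of order $\bar{\beta}^2$ rather than $\bar{\beta}$.
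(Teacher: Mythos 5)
Your proposal is correct and is essentially the paper's argument: the paper also sets $u_e=\phi_e-\bar{\phi}\mp\psi_1$, computes $-\nabla\cdot(a_e\nabla u_e)=\nabla\cdot(a\gamma\nabla\psi_1)$, and closes with the same energy/absorption estimate using \eqref{eq:127}(c,d); your preliminary bound $\|\psi\|_{1,2}\le C\bar{\beta}$ is not actually used in the final step, and the sign discrepancy you flag is indeed a typographical slip in \eqref{eq:128} (the correct remainder is $\phi_e-\bar{\phi}-\LL^{-1}(\nabla\cdot(a\gamma\nabla\bar{\phi}))$, consistent with your derivation of the equation for $r$).
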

\begin{proof}
 The proof is almost identical with the proof of \cite[Lemma 6.1] {al13}. Set
  \begin{displaymath}
    u_e = \phi_e- \bar{\phi}+\LL^{-1}\big(\nabla\cdot (a\gamma\nabla\bar{\phi})\big) \,.
  \end{displaymath}
Then,
\begin{displaymath}
  \begin{cases}
      -\nabla\cdot (a_e \nabla u_e) = -\nabla\cdot\big\{\gamma a\nabla\LL^{-1}\big(\nabla\cdot(\gamma a\nabla\bar{\phi})\big)\big\} & \text{in}
      \Omega \\
      u_e=0 & \text{on } \partial\Omega \,.
  \end{cases}
\end{displaymath}
Consequently, as $a_e>\lambda/2$ for sufficiently small $\bar{\beta}$ we have
that \cite{al13}
\begin{displaymath}
  \|\nabla u_e\|_2 \leq C
  \big\|\gamma a\nabla\LL^{-1}\big(\nabla\cdot(a\gamma\nabla\bar{\phi})\big)\big\|_2 \leq C\|\gamma\|_\infty^2\|\nabla\bar{\phi}\|_2 \,.
\end{displaymath}
From (\ref{eq:127}c) we then get  \eqref{eq:128}.
\end{proof}

We next show that $N\langle\phi_1\rangle$ can approximately be obtained by
applying $\LL^{-1}$ to $\nabla\cdot(a\gamma\nabla\bar{\phi})$ for an appropriate choice
of $\gamma$.
\begin{lemma}
Let $\phi_1$ be given by \eqref{eq:28}. Then, 
  \begin{equation}
    \label{eq:129}
  \Big\|N\Eb_f(\phi_1)+3\LL^{-1}\big(\nabla\cdot(a\beta\nabla\bar{\phi})\big) \Big\|_{1,2} \leq
  C\epsilon^{1/2} |\ln \epsilon|^{1/2}\bar{\beta}\,.
  \end{equation}
\end{lemma}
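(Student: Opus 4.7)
The plan is to reduce the estimate to an energy/duality bound and then compute the weak form via integration by parts around each inclusion, isolating the main cancelling contributions. Set $U:=N\Eb_f(\phi_1)+3\LL^{-1}(\nabla\cdot(a\beta\nabla\bar\phi))$. Both summands lie in $H^1_0(\Omega)$ ($\phi_1|_{\partial\Omega}=0$ by \eqref{eq:27} and $\LL^{-1}(\cdot)$ has zero trace by definition), so by Poincar\'e and the uniform ellipticity \eqref{eq:2} it suffices to control $\int_\Omega a\nabla U\cdot\nabla\varphi\,dx$ for arbitrary $\varphi\in H^1_0(\Omega)$ with $\|\nabla\varphi\|_2\leq1$. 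The weak form of $\LL^{-1}$ gives
\begin{displaymath}
\int_\Omega a\nabla\big(3\LL^{-1}(\nabla\cdot(a\beta\nabla\bar\phi))\big)\cdot\nabla\varphi\,dx=-3\int_\Omega a\beta\nabla\bar\phi\cdot\nabla\varphi\,dx,
\end{displaymath}
so everything reduces to computing $N\Eb_f\big(\int_\Omega a\nabla\phi_1\cdot\nabla\varphi\,dx\big)$ sharply.

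For each fixed $\eta\in\Omega_\epsilon$, integration by parts --- using $\LL\phi_1=0$ in $\Omega\setminus B(\eta,\epsilon)$, $\nabla\phi_1=-\nabla\bar\phi$ in $B(\eta,\epsilon)$, and the flux condition (\ref{eq:27}d) --- together with subtraction of an $\eta$-dependent constant $c(\eta)$ (e.g.\ the average of $\varphi$ on $\partial B(\eta,\epsilon)$) gives
\begin{displaymath}
\int_\Omega a\nabla\phi_1(\cdot,\eta)\cdot\nabla\varphi\,dx=-\int_{\partial B}a\partial_\nu\phi_1(\varphi-c(\eta))\,ds-\int_Ba\nabla\bar\phi\cdot\nabla\varphi\,dx.
\end{displaymath}
A second application of Fubini and the definition \eqref{eq:9} of $\beta$ converts the $N$-expectation of the volume term into $-\int_\Omega a\beta\nabla\bar\phi\cdot\nabla\varphi\,dx$. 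For the surface term I would substitute $\phi_1=\phi_0+v_1$ as in \eqref{eq:32}, use the explicit dipolar form \eqref{eq:29} together with the value of $C_a$ given by \eqref{eq:31}, and Taylor-expand $\varphi,\bar\phi,a$ to first order about $\eta$; a direct calculation yields a main contribution proportional to $\epsilon^3\,a(\eta)\nabla\bar\phi(\eta)\cdot\nabla\varphi(\eta)$. After taking the $N$-expectation and using the density approximation $\tfrac{4\pi\epsilon^3}{3}Nf_1(\eta)\approx\beta(\eta)$, this combines with the volume contribution and the $\LL^{-1}$ contribution to yield cancellation of the main order.

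The residual errors fall into three classes: (i) Taylor corrections on $\varphi,\bar\phi,a,f_1$; (ii) the discrepancy $\beta(\eta)-\tfrac{4\pi\epsilon^3}{3}Nf_1(\eta)$, nonzero only where $B(\eta,\epsilon)$ is truncated by $\partial\Omega_\epsilon$; and (iii) the $v_1$-contribution to the surface integral. Types (i) and (ii) yield admissible errors via routine $C^{2,\alpha}$ estimates on $\bar\phi,a$ together with \eqref{eq:7}; the $|\ln\epsilon|^{1/2}$ factor in the target bound arises from the $\epsilon^3/d(\eta,\partial\Omega)^{3/2}$ piece of \eqref{eq:34} integrated over boundary layers $d\lesssim\epsilon$. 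The main obstacle is (iii): applying Cauchy--Schwarz directly with the bound \eqref{eq:34} yields only $O(\bar\beta\epsilon^{-1/2}\|\nabla\varphi\|_2)$, which is far too large. To recover the needed $\sqrt\epsilon$, one rewrites $\int_{\partial B}a\partial_\nu v_1(\varphi-c(\eta))\,ds$ as a bulk integral. Since $\phi_0$ is $\Delta$-harmonic but not $\LL$-harmonic, $\LL v_1=\nabla a\cdot\nabla\phi_0$ in $\Omega\setminus B$, and Green's identity on $\Omega\setminus B$ (using that the $\partial\Omega$ boundary trace of $\varphi$ vanishes) converts the surface integral into
\begin{displaymath}
-\int_{\Omega\setminus B}a\nabla v_1\cdot\nabla\varphi\,dx+\int_{\Omega\setminus B}(\varphi-c(\eta))\nabla a\cdot\nabla\phi_0\,dx.
\end{displaymath}
The sharp dipolar decay $|\nabla\phi_0|\lesssim\epsilon^3/|x-\eta|^3$, the local Poincar\'e-type control of $\varphi-c(\eta)$ near $\eta$, and the global bound \eqref{eq:34} can then be combined to produce the sharp estimate, with the $|\ln\epsilon|^{1/2}$ arising precisely from integrating the boundary-layer tail of \eqref{eq:34}. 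Extracting this cancellation from the ensemble average, rather than pointwise in $\eta$, is the principal technical hurdle of the proof.
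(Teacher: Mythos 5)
Your overall frame (duality pairing against $\varphi\in H^1_0$, integration by parts around each $B(\eta,\epsilon)$, Fubini to bring in $\beta$, and cancellation of the dipole main term against the $\LL^{-1}$ contribution with the correct coefficient $3 = 1 + 2$) is sound and runs parallel to the paper's argument via the representation \eqref{eq:19} and the decomposition $\phi_1 = W_0^1 + W_0^2 + W_1$. The treatment of the $\phi_0$ contribution, the Taylor/$C_a$ expansion, and the boundary-layer discussion are also essentially right. So the structure is not where the problem is.

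The gap is in step (iii), the $v_1$ contribution. Your Green's identity is correct as an algebraic identity, but it is circular: it simply undoes the earlier integration by parts, reproducing $\int_{\Omega\setminus B}a\nabla v_1\cdot\nabla\varphi$ plus the extra source term $\int_{\Omega\setminus B}(\varphi-c)\nabla a\cdot\nabla\phi_0$. Both of these, estimated exactly as you propose (Cauchy--Schwarz against $\|\nabla\varphi\|_2\le 1$, the global bound \eqref{eq:34}, Hardy/Poincar\'e for $\varphi-c$, and the dipolar decay of $\nabla\phi_0$), yield a term $\lesssim\|\nabla v_1(\cdot,\eta)\|_2$ pointwise in $\eta$, and after multiplying by $N$ and taking the ensemble average one lands at $N\int f_1\|\nabla v_1\|_2\,d\eta\sim N\epsilon^{5/2}=\bar\beta\epsilon^{-1/2}$ --- the very size you correctly identified as too large. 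There is no cancellation to rescue this: the claim that these ingredients ``can then be combined to produce the sharp estimate'' is asserted but not achievable from those ingredients alone, and the invocation of an ``ensemble-average cancellation'' is not substantiated by anything in the proposal.

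The missing device is a radial cutoff $\chi_\epsilon(\xi)=\chi(|\xi-\eta|/\epsilon)$, as in the paper's splitting $W_1=W_2+W_3$ in \S\,6. Inserting $\chi_\epsilon$ into your divergence-theorem step (i.e.\ working with the vector field $a\chi_\epsilon(\varphi-c)\nabla v_1$ rather than $a(\varphi-c)\nabla v_1$) replaces $\int_{\Omega\setminus B}a\nabla v_1\cdot\nabla\varphi$ by $\int a\chi_\epsilon\nabla v_1\cdot\nabla\varphi + \int a(\varphi-c)\nabla\chi_\epsilon\cdot\nabla v_1 - \int\chi_\epsilon(\varphi-c)\nabla a\cdot\nabla\phi_0$, and now \emph{every} bulk integrand is supported in the annulus $\epsilon<|\xi-\eta|<2\epsilon$. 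After Fubini, the resulting vector field $F(\xi)=a(\xi)N\int_{\Omega_\epsilon}\chi_\epsilon\nabla v_1(\xi,\eta)f_1\,d\eta$ has its $\eta$-integral taken over a set of measure $\sim\epsilon^3$ (not $|\Omega|$); the Cauchy--Schwarz step then produces $\|F\|_2^2\le C\epsilon^3\int_{\Omega_\epsilon}\|\nabla v_1(\cdot,\eta)\|_2^2\,d\eta\le C\epsilon^7$, which is exactly the factor of $\epsilon$ you are short. This localization --- not a rewriting into bulk form per se --- is the real content of the step, and without it the argument as written does not close.
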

\begin{proof}
By \eqref{eq:19}, \eqref{eq:28}, and \eqref{eq:32} we have that
\begin{displaymath}
  \phi_1(x,\eta)= \int_{\partial B(\eta,\epsilon)} 
G(x,\xi)a(\xi)\Big[\frac{\partial\bar{\phi}}{\partial\nu}(\xi) + \frac{\partial\phi_0}{\partial\nu}(\xi,\eta)+\frac{\partial v_1}{\partial\nu}(\xi,\eta)\Big] \,ds_\xi \,.
\end{displaymath}
It can be easily verified from \eqref{eq:18}  that for every $x\in\Omega$,
\begin{displaymath}
  W_0^1(x)=\int_{\partial B(\eta,\epsilon)} 
G(x,\xi)a(\xi)\frac{\partial\bar{\phi}}{\partial\nu}(\xi) \,ds_\xi = \int_{B(\eta,\epsilon)} \nabla_\xi G(x,\xi)\cdot a(\xi)\nabla\bar{\phi}(\xi) \,d\xi \,.
\end{displaymath}
For the expectation we then obtain
\begin{displaymath}
  \Eb_f(W_0^1)= \int_{\Omega_\epsilon} \int_{B(\eta,\epsilon)} a(\xi)\nabla_\xi G(x,\xi)\cdot\nabla\bar{\phi}(\xi) \,d\xi\,
  f_1(\eta)\,d\eta\,.
\end{displaymath}
Interchanging  the order of integration then yields, by \eqref{eq:9},
\begin{equation}
\label{eq:130}
    N\Eb_f(W_0^1)=\int_\Omega\nabla_\xi G(x,\xi)\cdot a(\xi)\nabla\bar{\phi}(\xi)\int_{B(\xi,\epsilon)}Nf_1(\eta)\,d\eta  \,d\xi = -\LL^{-1}(\beta a\nabla\bar{\phi})\,,
\end{equation}
where use has been made of the fact that $f_1\equiv0$ in $\R^3\setminus\Omega_\epsilon$. 

Next we compute the expectation of
\begin{displaymath}
  W_0^2(x)=\int_{\partial B(\eta,\epsilon)} 
G(x,\xi)a(\xi)\frac{\partial\phi_0}{\partial\nu}(\xi) \,ds_\xi = \int_{\partial B(\eta,\epsilon)} 
G(x,\xi)a(\xi)[2\nabla\bar{\phi}(\eta)\cdot\nu-\epsilon C_a] \,ds_\xi \,,
\end{displaymath}
where we have used the definition of $\phi_0$ in
\eqref{eq:29}. Integration by parts then yields
\begin{subequations}
  \label{eq:143}
\begin{equation}
  W_0^2=\tilde{W}_1 + \tilde{W}_2 \,,
\end{equation}
where
\begin{equation}
  \tilde{W}_1(x) =\int_{B(\eta,\epsilon)}G(x,\xi)[2\nabla a(\xi)\cdot\nabla\bar{\phi}(\eta)-3a(\xi)C_a-C_a\nabla a(\xi)\cdot(\xi-\eta)]\,d\xi \,,
\end{equation}
and
\begin{equation}
  \tilde{W}_2(x)= \int_{B(\eta,\epsilon)}a(\xi)\nabla_\xi G(x,\xi)\cdot[2\nabla\bar{\phi}(\eta)-C_a(\xi-\eta)]\,d\xi \,.
\end{equation}
\end{subequations}
Let
\begin{displaymath}
  \tilde{g}=2\nabla a(\xi)\cdot\nabla\bar{\phi}(\eta)-3a(\xi)C_a-C_a\nabla a(\xi)\cdot(\xi-\eta) \,.
\end{displaymath}
Clearly,
\begin{displaymath}
  \Eb_f(  \tilde{W}_1) = \int_{\Omega_\epsilon} \int_{B(\eta,\epsilon)} G(x,\xi)\cdot\tilde{g}(\xi,\eta) \,d\xi\,
  f_1(\eta)\,d\eta\,.
\end{displaymath}
Interchanging  the order of integration then yields, as above,
\begin{equation}
\label{eq:131}
    N\Eb_f(\tilde{W}_1)= \LL^{-1}\Big(N\int_{B(\cdot,\epsilon)} \tilde{g}(\cdot,\eta)
    f_1(\eta)\,d\eta\Big)\,.
\end{equation}
By \eqref{eq:31} we have that
\begin{displaymath}
  \sup_{(\xi,\eta)\in\Omega\times B(\xi,\epsilon)}|\tilde{g}|\leq C\epsilon^\alpha \,.
\end{displaymath}
Consequently, we obtain that
\begin{equation}
  \label{eq:132}
\|N\Eb_f(\tilde{W}_1)\|_{1,2} \leq C\beta\epsilon^\alpha \,. 
\end{equation}

We now estimate 
\begin{displaymath}
   \Eb_f(  \tilde{W}_2) = \int_{\Omega_\epsilon} \int_{B(\eta,\epsilon)} a(\xi)\nabla_\xi G(x,\xi)\cdot[2\nabla\bar{\phi}(\eta)-C_a(\xi-\eta)] \,d\xi\,
  f_1(\eta)\,d\eta\,,
\end{displaymath}
from which we easily obtain that
\begin{displaymath}
    \Eb_f(\tilde{W}_2)= \int_{\Omega_\epsilon}  a(\xi)\nabla_\xi
    G(x,\xi)\cdot\int_{B(\xi,\epsilon)}[2\nabla\bar{\phi}(\eta)-C_a(\xi-\eta)]f_1(\eta) \,d\eta\,d\xi\,. 
\end{displaymath}
As
\begin{displaymath}
  \sup_{(\xi,\eta)\in\Omega\times
    B(\xi,\epsilon)}\big|2\big(\nabla\bar{\phi}(\eta)-\nabla\bar{\phi}(\xi)\big)-C_a(\xi-\eta)\big|\leq C\epsilon\,,
\end{displaymath}
we obtain that
\begin{displaymath}
   \Big\|N\Eb_f(\tilde{W}_2)-2\LL^{-1}(\beta a\nabla\bar{\phi})\Big\|_{1,2}\leq C\beta\epsilon \,.
\end{displaymath}
Let $W_0=W_0^1+W_0^2$. Combining the above with \eqref{eq:130}, \eqref{eq:143}, and
\eqref{eq:132} yields   
\begin{equation}
  \label{eq:133}
\Big\|N\Eb_f(W_0)-3\LL^{-1}(\beta a\nabla\bar{\phi})\Big\|_{1,2}\leq C\beta\epsilon^\alpha \,.
\end{equation}

It remains necessary, to bound the expectation of
\begin{displaymath}
  W_1=\int_{\partial B(\eta,\epsilon)} 
  a(\xi)G(x,\xi)\frac{\partial v_1}{\partial\nu}(\xi,\eta) \,ds_\xi  \,,
\end{displaymath}
since, obviously, $\phi_1=W_0+W_1$.  Let $\chi$ be given by \eqref{eq:25}
and set $\chi_\epsilon=\chi(|\xi-\eta|/\epsilon)$. Integration by parts yields
\begin{equation}
\label{eq:134}
  W_1=W_2+W_3\,,
\end{equation}
where
\begin{displaymath}
  W_2=\int_{\Omega\setminus B(\eta,\epsilon)} a(\xi)\chi_\epsilon \nabla_\xi G(x,\xi)\cdot\nabla v_1(\xi,\eta) \,d\xi\,,
\end{displaymath}
and
\begin{displaymath}
  W_3=\int_{\Omega\setminus B(\eta,\epsilon)} G(x,\xi) \nabla(a\chi_\epsilon)\cdot\nabla v_1(\xi,\eta) \,d\xi\,.
\end{displaymath}
It can now be easily verified, by interchanging the order of
integration, that
\begin{displaymath}
  \Eb_f(W_2)= \int_\Omega  a(\xi)\nabla_\xi G(x,\xi) \cdot\int_{\Omega\setminus B(\xi,\epsilon)}\chi_\epsilon(|\xi-\eta|) \nabla v_1(\xi,\eta)\,
  f_1(\eta)\,d\eta  \,d\xi= -\Delta^{-1}(\Div F_2) \,,
\end{displaymath}
where
\begin{displaymath}
  F_2(\xi) = a(\xi)\int_{\Omega_\epsilon\setminus B(\xi,\epsilon)}\chi_\epsilon(|\xi-\eta|) \nabla v_1(\xi,\eta)\,
  f_1(\eta)\,d\eta\,.
\end{displaymath}
It thus follows that 
\begin{equation}
\label{eq:135}
  \|N \nabla\Eb_f(W_2)\|_2 \leq \|NF_2\|_2\,.
\end{equation}
As, by .\eqref{eq:7}
\begin{displaymath}
  |F_2|^2 \leq C\epsilon^3  \int_{\Omega_\epsilon\setminus B(\xi,\epsilon)} |\nabla v_1(\xi,\eta)|^2\,d\eta\,,
\end{displaymath}
we obtain, interchanging once again the order of integration
\begin{displaymath}
   \|F_2\|^2_2 \leq C\epsilon^3  \int_{\Omega_\epsilon}  \int_{\Omega\setminus B(\eta,\epsilon)} |\nabla v_1(\xi,\eta)|^2\,d\xi\,d\eta\,.
\end{displaymath}
With the aid of \eqref{eq:34} we then obtain that
\begin{displaymath}
  \|F_2\|^2_2 \leq  C\epsilon^3\int_{\Omega_\epsilon} \Big(\epsilon^5 +\frac{\epsilon^6}{d(\eta,\partial\Omega)^3}\Big) \,d\eta \leq C\epsilon^7
\end{displaymath}
Hence, by \eqref{eq:135},
\begin{equation}
\label{eq:136}
  \|N \nabla\Eb_f(W_2)\|_2 \leq C\bar{\beta}\epsilon^{1/2} \,.
\end{equation}

Interchanging the order of integration once again yields
\begin{displaymath}
   \Eb_f(W_3)= \Delta^{-1}(F_3) \,,
\end{displaymath}
where
\begin{equation}
\label{eq:137}
  F_3(\xi) = a(\xi)\int_{\Omega_\epsilon\setminus B(\xi,\epsilon)} \nabla\chi_\epsilon(|\xi-\eta|)\cdot\nabla v_1(\xi,\eta)\,
  f_1(\eta)\,d\eta\,.
\end{equation}
As
\begin{equation}
\label{eq:138}
    \|\nabla\Eb_f(W_3)\|_2^2 \leq \|\Eb_f(W_3)\|_\infty\|F_3\|_1 \,,
\end{equation}
we seek an estimate for both $\|\Eb_f(W_3)\|_\infty$ and $\|F_3\|_1$.
To estimate the former we write
\begin{displaymath}
  \Eb_f(W_3)= w_{3,1}+w_{3,2}\,,
\end{displaymath}
where
\begin{displaymath}
  w_{3,1}=\int_{\Omega_{2\epsilon}} \int_{\Omega\setminus B(\eta,\epsilon)} G(x,\xi) \nabla(a\chi_\epsilon)\cdot\nabla
  v_1(\xi,\eta) \,d\xi f_1(\eta)\,d\eta\,,
\end{displaymath}
and
\begin{displaymath}
  w_{3,2}=\int_{\Omega_\epsilon\setminus\Omega_{2\epsilon}} \int_{\Omega\setminus B(\eta,\epsilon)} G(x,\xi) \nabla(a\chi_\epsilon)\cdot\nabla
  v_1(\xi,\eta) \,ds_\xi f_1(\eta)\,d\eta\,.
\end{displaymath}

Since whenever $d(\eta,\partial\Omega)\geq2\epsilon$ we have, by \eqref{eq:35},
\begin{displaymath}
  \int_{\Omega\setminus B(\eta,\epsilon)} \nabla(a\chi_\epsilon)\cdot\nabla
  v_1(\xi,\eta) \,d\xi = \int_{\partial B(\eta,\epsilon)} a(\xi)\frac{\partial v_1}{\partial\nu}(\xi,\eta)
  \,ds_\xi =0 \,,
\end{displaymath}
we may write
\begin{displaymath}
  w_{3,1}=\int_{\Omega_{2\epsilon}} \int_{A_\epsilon(\eta) } [G(x,\xi)-G(x,\eta)] \nabla(a\chi_\epsilon)\cdot\nabla
  v_1(\xi,\eta) \,d\xi f_1(\eta)\,d\eta\,,
\end{displaymath}
where $A_\epsilon(\eta)=(B(\eta,2\epsilon)\setminus B(\eta,\epsilon))\cap\Omega$. By \eqref{eq:142}, for every
$\xi\in A_\epsilon(\eta)$, we have 
\begin{displaymath}
  |G(x,\xi)-G(x,\eta)|\leq C\min \Big(
  \frac{1}{|x-\xi|},\frac{\epsilon}{|x-\xi|^2}\Big) \,.
\end{displaymath}
Hence,
\begin{displaymath}
  \int_{A_\epsilon(\eta)}|G(x,\xi)-G(x,\eta)|^2\,d\xi\leq C\frac{\epsilon^{5/2}}{|x-\eta|^2} \,.
\end{displaymath}
As a result, we obtain with the aid of \eqref{eq:34} that
\begin{multline*}
  |w_{3,1}(x)| \leq \frac{C}{\epsilon} \int_{\Omega_{2\epsilon}} \Big[\int_{A_\epsilon(\eta) } |G(x,\xi)-G(x,\eta)|^2
  d\xi \Big]^{1/2}\|\nabla v_1(\cdot,\eta)\|_{L^2(\Omega\setminus B(\eta,\epsilon))}\, d\eta \\\leq
  C\epsilon^{3/2}\int_{\Omega_{2\epsilon}} \frac{1}{|x-\eta|^2}
  \Big(\epsilon^{5/2}+\frac{\epsilon^3}{d(\eta,\partial\Omega)^{3/2}}\Big)\, d\eta\,.
\end{multline*}
It can be easily verified that
\begin{displaymath}
  \sup_{x\in\Omega}\int_{\Omega_{2\epsilon}} \frac{1}{|x-\eta|^2}\frac{1}{d(\eta,\partial\Omega)^{3/2}}\,
  d\eta\leq C|\ln \epsilon|\epsilon^{-1/2}\,.
\end{displaymath}
Consequently,
\begin{equation}
  \label{eq:139}
\|w_{3,1}\|_\infty\leq C\epsilon^4|\ln \epsilon| \,. 
\end{equation}

Next we estimate $w_{3,2}$. Here we use the fact that $G(x,\xi)=0$ for
all $\xi\in\partial\Omega$ to obtain, by \eqref{eq:142}, that
\begin{displaymath}
  G(x,\xi) \leq C\min\Big(\frac{1}{|x-\xi|},\frac{d(\xi,\partial\Omega)}{|x-\xi|^2}\Big)\,.
\end{displaymath}
Then, 
\begin{multline*}
    |w_{3,2}(x)| \leq \frac{C}{\epsilon} \int_{\Omega_{2\epsilon}} \Big[\int_{A_\epsilon(\eta) } |G(x,\xi)|^2
  d\xi \Big]^{1/2}\|\nabla v_1(\cdot,\eta)\|_{L^2(\Omega\setminus B(\eta,\epsilon))}\,d\eta
  \leq \\C\epsilon^{3/2}\int_{\Omega_\epsilon\setminus\Omega_{2\epsilon}} \frac{1}{|x-\eta|^2}
  \Big(\epsilon^{5/2}+\frac{\epsilon^3}{d(\eta,\partial\Omega)^{3/2}}\Big)\, d\eta\,.
\end{multline*}
From which we easily obtain that $\|w_{3,1}\|_\infty \leq C|\ln \epsilon|\epsilon^4$\,, and hence,
\begin{equation}
\label{eq:140}
   \|\Eb_f(W_3)\|_2 \leq C\epsilon^4|\ln \epsilon| \,.
\end{equation}
We now use \eqref{eq:137} to obtain that
\begin{displaymath}
  |F_3(\xi)| \leq \frac{C}{\epsilon}\int_{A_\epsilon(\xi)} |\nabla v_1(\xi,\eta)| f_1(\eta)\,d\eta\,.
\end{displaymath}
Integrating with respect to $\xi$ yields, after we interchange the
order of integration, with the aid of \eqref{eq:34},
\begin{displaymath}
  \|F_3\|_1 \leq C\epsilon^3\,.
\end{displaymath}
Combining the above with \eqref{eq:140} and \eqref{eq:138} yields
\begin{displaymath}
    \|N \nabla\Eb_f(W_3)\|_2 \leq C\bar{\beta}\epsilon^{1/2}|\ln \epsilon|^{1/2} \,,
\end{displaymath}
which together with \eqref{eq:136}, \eqref{eq:134} and \eqref{eq:130}
completes the proof of \eqref{eq:129}.
\end{proof}

\appendix

\section{Green's function properties}
\label{app:A}

Let $G$ denote the (positive) Green's function associated with the
Dirichlet realization in $\Omega$ of $\A=\Div A\nabla $, where
$A\in C^{1,\alpha}(\Omega,M^{3\times3})$ satisfies
\begin{displaymath}
  \lambda|\xi|^2\leq \xi\cdot A\xi\leq \Lambda|\xi|^2 \quad \forall (x,\xi)\in\Omega\times\R^3 
\end{displaymath}.
We now prove
\begin{lemma}
  There exists $C(\Omega)>0$ such that
  \begin{equation}
    \label{eq:141}
G(x,\xi)\leq\frac{C}{|x-\xi|} \quad \forall(x,\xi)\in\Omega^2\,.
  \end{equation}
\end{lemma}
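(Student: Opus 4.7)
The plan is to reduce the statement to the classical bound of Littman--Stampacchia--Weinberger (and Gr\"uter--Widman) on Green's functions of divergence-form uniformly elliptic operators, whose hypotheses are certainly satisfied here since the assumption $A\in C^{1,\alpha}$ makes the coefficients in particular bounded and uniformly elliptic. I would proceed in three steps that together constitute the standard proof of such bounds.

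First, I would establish a global weak-type estimate of the form $|\{x\in\Omega : G(x,\xi)>t\}|\leq Ct^{-3}$ with $C$ depending only on $\lambda$, $\Lambda$, and $|\Omega|$. This follows from Stampacchia's truncation method: one tests the defining identity $\int_\Omega A\nabla G(\cdot,\xi)\cdot\nabla\psi\,dx=\psi(\xi)$ (interpreted in the appropriate approximation sense, e.g.\ via mollified point sources or via the truncated Green's function $G_k=\min(G,k)$) against functions built from level sets of $G$ itself, uses uniform ellipticity to produce a Dirichlet-energy inequality for those truncations, and concludes via the Sobolev embedding $H^1_0(\Omega)\hookrightarrow L^6(\Omega)$. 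Handling the Dirac right-hand side is where the bulk of the technical work sits, and in my view is the main obstacle in the argument.

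Second, fix $x\in\Omega$ with $r:=|x-\xi|/2>0$, and observe that $G(\cdot,\xi)$ is a nonnegative $\mathcal{A}$-harmonic function in the ball $B_r(x)$, which is disjoint from the singularity $\xi$. Moser's local boundedness estimate for nonnegative subsolutions of divergence-form uniformly elliptic equations then gives, for any $p\geq 1$,
\[
   G(x,\xi)\leq C\,r^{-3/p}\,\|G(\cdot,\xi)\|_{L^p(B_r(x))},
\]
where the constant depends only on $\lambda$ and $\Lambda$.

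Third, choose $p\in[1,3)$ and convert the weak-$L^3$ bound from Step~1 into an $L^p$-norm estimate by the standard layer-cake interpolation: splitting $\int_{B_r(x)} G^p\,dx$ at a threshold $T$ and optimising over $T$ yields $\|G(\cdot,\xi)\|_{L^p(B_r(x))}\leq C\,r^{3/p-1}$. Plugging this into the Moser estimate from Step~2 gives
\[
  G(x,\xi)\leq C\,r^{-3/p}\cdot r^{3/p-1}=\frac{C}{r}=\frac{2C}{|x-\xi|},
\]
which is \eqref{eq:141}. The constant is independent of $\xi$ and $x$ because neither Step~1 nor Step~2 requires any assumption on the distance of $\xi$ or $x$ to $\partial\Omega$; homogeneous Dirichlet data is used only (and only implicitly, via Sobolev embedding on $H^1_0$) in Step~1.
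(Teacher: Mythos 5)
Your route is genuinely different from the paper's: where you re-derive the classical Littman--Stampacchia--Weinberger pointwise bound from scratch (Stampacchia truncation $\Rightarrow$ weak-$L^{3}$, Moser local boundedness, and a layer-cake interpolation to close the loop), the paper instead extends $A$ to a matrix $\tilde A$ on a large ball $B(x_0,R)\supset\supset\Omega$, uses the maximum principle to get $G\le\tilde G$ (the Green's function of $\Div\tilde A\nabla$ on the ball), and then invokes \cite[Theorem~7.1]{lietal63} directly to compare $\tilde G$ with the explicit Green's function $G_\Delta$ of the Laplacian on that ball. What the paper buys with its comparison argument is brevity and the ability to lean on an off-the-shelf theorem on a nice domain; what your argument buys is self-containment and a clear display of where each hypothesis (ellipticity, bounded domain) enters. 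Your Steps 1 and 3 are correct as written.

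There is, however, a gap in Step 2 and in your closing remark that ``neither Step 1 nor Step 2 requires any assumption on the distance of $\xi$ or $x$ to $\partial\Omega$.'' With $r=|x-\xi|/2$ and $x$ close to $\partial\Omega$, the ball $B_r(x)$ need not lie in $\Omega$, so $G(\cdot,\xi)$ is not an $\A$-harmonic function ``in the ball $B_r(x)$''---it is not even defined on all of $B_r(x)$. Moser's interior local boundedness estimate therefore does not apply as invoked. The fix is standard but must be said: extend $G(\cdot,\xi)$ by zero to $\R^3\setminus\Omega$ and extend $A$ by (say) the identity outside $\Omega$; since $G(\cdot,\xi)\ge0$ in $\Omega$, $G(\cdot,\xi)=0$ on $\partial\Omega$, and $\partial_\nu G(\cdot,\xi)\le0$ on $\partial\Omega$, the zero extension is a nonnegative \emph{subsolution} of the extended operator on $B_r(x)$, and Moser's local boundedness estimate holds for nonnegative subsolutions with the same constants. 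Note that this extension is precisely the device the paper uses (there to compare Green's functions, here to run Moser near $\partial\Omega$), so it cannot be dispensed with. One also wants the outer Moser ball to sit strictly inside the region of subharmonicity; replacing $r$ by $r/2$ in Step~2 and absorbing the factor $2^{3/p}$ into the constant handles that without affecting the conclusion.
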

\begin{proof}
  Let $x_0\in\Omega$ and $R>0$ be such $\Omega\Subset B(x_0,R)$. Let further
  \begin{displaymath}
    \tilde{A}=
    \begin{cases}
      A & x\in\overline{\Omega}  \\
      1 & x\in B(x_0,R)\setminus\overline{\Omega} \,.
    \end{cases}
  \end{displaymath}
Let $\tilde{G}$ denote the Green's function associated with the
Dirichlet realization in $B(x_0,R)$ of $\tilde{\A}=\Div \tilde{A}\nabla$. Since
$G(x,y) <\tilde{G}(x,y)$ for all $(x,y)\in\partial\Omega\times\Omega$, we obtain by the
maximum principle we that $G(x,y) <\tilde{G}(x,y)$ for all
$(x,y)\in\Omega\times\Omega$. 

Let $G_\Delta$ denote the Green's function associated with the Dirichlet
realization of $-\Delta$ in $B(x_0,R)$. By \cite[Theorem 7.1]{lietal63} we
have that
\begin{displaymath}
  \tilde{G}(x,y)\leq CG_\Delta\,,
\end{displaymath}
from which \eqref{eq:141} readily follows.
\end{proof}

We can now state
\begin{lemma}
    For every multi-index $\beta$, with $|\beta|\leq2, $there exists $C(\Omega,\beta)>0$ such that
  \begin{equation}
\label{eq:142}
|D^\beta G(x,\xi)|\leq\frac{C}{|x-\xi|^{|\alpha|}} \quad \forall(x,\xi)\in\Omega^2\,.
  \end{equation}
\end{lemma}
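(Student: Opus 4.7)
The plan is to fix $\xi\in\Omega$ and view $u(y):=G(y,\xi)$ as a solution of $\A u=0$ on $\Omega\setminus\{\xi\}$ with $u=0$ on $\partial\Omega$, and then combine the pointwise bound \eqref{eq:141} with classical Schauder estimates to obtain pointwise derivative bounds. The case $|\beta|=0$ is \eqref{eq:141}, so only $|\beta|\in\{1,2\}$ need be treated. Set $r:=|x-\xi|$, and split into an interior regime $d(x,\partial\Omega)\geq r/4$ and a boundary regime $d(x,\partial\Omega)< r/4$.

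In the interior regime, $B(x,r/4)\subset\Omega\setminus\{\xi\}$ and $\A u=0$ there. Rescaling the ball to unit size via $\tilde u(\tilde y)=u(x+(r/4)\tilde y)$ produces a divergence-form equation whose $C^{1,\alpha}$ coefficients have norms uniformly controlled (in fact nonincreasing in $r$). Standard interior Schauder estimates then yield
\begin{displaymath}
|D^\beta u(x)|\leq\frac{C}{r^{|\beta|}}\,\|u\|_{L^\infty(B(x,r/4))}.
\end{displaymath}
Since every $y\in B(x,r/4)$ satisfies $|y-\xi|\geq 3r/4$, \eqref{eq:141} gives $\|u\|_{L^\infty(B(x,r/4))}\leq C/r$, and hence $|D^\beta u(x)|\leq C/r^{|\beta|+1}$.

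In the boundary regime, let $x_0\in\partial\Omega$ realize the distance from $x$ to $\partial\Omega$. Then $|x_0-\xi|\geq r-r/4=3r/4$, so $\xi\notin B(x_0,r/2)$ and $u$ satisfies $\A u=0$ in $B(x_0,r/2)\cap\Omega$ with $u\equiv0$ on $B(x_0,r/2)\cap\partial\Omega$. Straightening the $C^{2,\alpha}$ boundary near $x_0$ via a local chart and rescaling by $r$ reduces matters to a boundary Schauder estimate on a fixed half-ball with zero Dirichlet data, in which the boundary-data norm drops out and only the $L^\infty$ term remains:
\begin{displaymath}
|D^\beta u(x)|\leq\frac{C}{r^{|\beta|}}\,\|u\|_{L^\infty(B(x_0,r/2)\cap\Omega)}\leq\frac{C'}{r^{|\beta|+1}},
\end{displaymath}
the last inequality again from \eqref{eq:141}, since every point of $B(x_0,r/2)\cap\Omega$ lies at distance at least $r/4$ from $\xi$.

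The principal technical obstacle is verifying that the Schauder constants are uniform in $r$ and $x_0$. In the interior regime this is immediate from scaling. In the boundary regime, uniformity requires that the $C^{2,\alpha}$ charts straightening $\partial\Omega$ at varying base points $x_0$ admit bounds depending only on $\Omega$, which follows from compactness of $\partial\Omega$ together with its assumed $C^{2,\alpha}$ regularity; the rescaled coefficients on the straightened half-ball inherit $C^{\alpha}$ norms bounded by those of $A$, so the constant in the boundary Schauder estimate can be chosen independently of $x_0$ and $r$.
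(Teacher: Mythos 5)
Your approach---freezing $\xi$, regarding $u(y)=G(y,\xi)$ as an $\A$-harmonic function away from $\xi$ with zero Dirichlet trace, and combining the already-established pointwise bound \eqref{eq:141} with scaled interior/boundary Schauder estimates---is the standard way to derive \eqref{eq:142}, and the arithmetic in both your interior regime $d(x,\partial\Omega)\ge r/4$ and your boundary regime $d(x,\partial\Omega)<r/4$ checks out (in the boundary case $x\in B(x_0,r/4)$, $\xi\notin B(x_0,r/2)$, and every $y\in B(x_0,r/2)$ satisfies $|y-\xi|>r/4$, exactly as you say). The paper defers to the proof of \cite[Lemma A.2]{al13} without reproducing it, but that proof almost certainly runs along the same lines, so there is no methodological divergence to report here.

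There is, however, a genuine gap: you only control derivatives of $G$ \emph{with respect to $x$}. Reading the lemma in light of how it is invoked---\eqref{eq:52} speaks of the full Hessian of $G$ in $(x,\xi)$, and the proofs of Lemma~\ref{lem:2.2} and of the $v_2$ estimates repeatedly use $D^2_{x\xi}G$ and $\nabla_\xi G$ (e.g.\ around \eqref{eq:53}, \eqref{eq:73}, \eqref{eq:74})---the multi-index $\beta$ must range over \emph{all} first and second derivatives in the six variables $(x,\xi)$, including the mixed ones. Your argument, as written, gives only $D^\beta_x G$. The repair is short and you should supply it: since $A$ is symmetric, $\A$ is self-adjoint and so $G(x,\xi)=G(\xi,x)$, whence the identical Schauder argument applied in $\xi$ gives $|D^\beta_\xi G|\le C|x-\xi|^{-|\beta|-1}$. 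For the mixed second derivative, observe that $\A_x$ commutes with $\partial_{\xi_i}$, so $x\mapsto\partial_{\xi_i}G(x,\xi)$ is again $\A$-harmonic in $\Omega\setminus\{\xi\}$ and vanishes on $\partial\Omega$; applying your interior/boundary Schauder argument (now with one derivative in $x$) to this function, with the just-proved bound $\|\partial_{\xi_i}G(\cdot,\xi)\|_{L^\infty(B)}\le C/r^2$ replacing \eqref{eq:141} on the relevant ball, produces $|\partial_{x_j}\partial_{\xi_i}G|\le C/r^3$ as required. Finally, note that the exponent $|\alpha|$ in the displayed statement is a typo for $|\beta|+1$; you correctly read it that way, but it is worth recording.
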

\begin{proof}
  We skip the proof, as it is almost identical with the proof of 
  \cite[Lemma A.2]{al13}.
\end{proof}

\bibliography{surfk}
 \end{document}